\documentclass[11pt,a4paper]{article} 

\usepackage{mathpazo}

\usepackage[active]{srcltx}
\usepackage[utf8]{inputenc}
\usepackage[english]{babel}
\usepackage{graphicx}
\usepackage[T1]{fontenc}
\usepackage{amsmath}
\usepackage{amssymb} 
\usepackage{amsthm}
\usepackage{eqnarray}
\usepackage[numbers]{natbib}
\usepackage{array}
\usepackage{esint}
\usepackage{enumitem}
\usepackage{makecell}
\usepackage[left=2.05cm,right=2.05cm,top=2.95cm,bottom=2.8cm]{geometry}
 \usepackage{amssymb}
 \usepackage{bbm, bm}
 \usepackage{amsmath, amsfonts, amsthm, amscd}  
 \usepackage{xcolor}
 \usepackage{mathtools}
 \DeclarePairedDelimiter{\ceil}{\lceil}{\rceil}

\newcommand{\R}{\mathbb{R}}

\newcommand{\N}{\mathbb{N}}

\newcommand{\E}{\mathbb{E}}

\newcommand{\sH}{\mathcal{H}}

\newcommand{\w}{\textbf{w}}
\newcommand{\vs}{\textbf{v}}
\newcommand{\btheta}{\bm{\theta}}

\theoremstyle{plain}

\newtheorem{theorem}{Theorem}

\newtheorem{proposition}{Proposition}
\newtheorem{remark}{Remark}

\newtheorem{lemma}{Lemma}
\newtheorem{corollary}{Corollary}

\usepackage{titlesec}
\titleformat{\subsubsection}[runin]
{\normalfont\itshape}{\thesubsubsection}{1em}{}

 \def\N {\mathbb{N}}

\def \G {\mathbb{G}}
\def \E {\mathbb{E}}
\def \P {\mathbb{P}}
\def\simiid{\overset{\text{i.i.d.}}{\sim}}
\def\simind{\overset{\text{ind.}}{\sim}}

\def \R {\mathbb{R}}
\def \d {\text{d}}

\def \matK {\mathcal{K}}
\def \tP {\tilde{P}}
\def \tF {\tilde{F}}
\def \tQ {\tilde{Q}}
\def \tp {\tilde{p}}
\def \tw {\tilde{w}}
\def \ttheta {\tilde{\theta}}
\newcommand{\rnorm}{\right \rvert \right \rvert}
\newcommand{\lnorm}{\left \lvert \left \lvert}

\title{Posterior concentration and adaptation of the mixing measure in Dirichlet process mixtures}

\author{Filippo Ascolani \thanks{Duke University, Department of Statistical Science, Durham, NC, United States (filippo.ascolani@duke.edu)}}

\begin{document}
\maketitle

\abstract{
We study the asymptotic properties of the posterior on the latent space for infinite mixtures driven by a Dirichlet process, both in terms of mixing measure and clustering behaviour. In the well-specified regime, where the data are generated by a finite mixture of location densities, we show that the posterior is adaptive to the true number of components $K$: indeed the cumulative mass assigned to weights of the stick-breaking representation beyond the $K$-th one vanishes as $n^{-1/2}$, up to terms growing slower than any polynomial. This also implies a nearly optimal posterior contraction rate for the mixing measure in Wasserstein distance. A remarkable phase transition underlies this result: approximating the mixing measure to any precision finer than $n^{-1/2}$ requires a number of components growing logarithmically with the sample size. We show that this has a profound impact on the clustering behaviour: the number of clusters grows logarithmically, as in the prior case, but the proportion of observations outside the $K$ largest clusters vanishes polynomially fast.
Finally, we turn these results into posterior guarantees for truncation-based approximations: while any truncation with at least $K$ elements recovers the optimal contraction rates for both density and mixing measure, $\mathcal{O}(\log n)$ components are both necessary and sufficient to reproduce the clustering of the exact posterior.}

\section{Introduction}

Mixture models have a long history in Statistics \citep{pearson1894} and are routinely applied in various application domains \citep{ mclachlan1988mixture, lindsay1995mixture, mclachlan2000finite}. In this setting each observation $X \in \mathcal{X}$ is endowed with a likelihood
\begin{equation}\label{eq:general_likelihood}
Pf(x) = \int_{\Theta} f(x \mid \theta)P(\d \theta),
\end{equation}
where $\{f(x \mid \theta)\}_{\theta \in \Theta}$ is a family of probability densities depending on some parameter $\theta \in \Theta$ and $P \in \mathcal{P}(\Theta)$, the class of probability measures on $\Theta$. Then $P$ is called the mixing measure and $Pf$ the corresponding mixture density.

A first use of mixture models is to estimate the "true" data generating density and the mixing measure becomes a useful tool to construct a flexible class of distributions. However in many contexts the mixing measure itself is the main object of interest: indeed $P$ contains the information about the heterogeneity in the population and often yields a useful interpretation. Moreover, in the context of model-based clustering \citep{gormley2023model}, the goal is to partition the data into distinct subgroups: also in this case the main interest lies not in the space of observables $\mathcal{X}$, but in a latent space induced by the mixing measure.

In this paper we focus on the latter two settings within a Bayesian framework, where a prior $\Pi$ is placed on $P$. We consider $\Pi$ to be supported on the space of discrete measures, i.e.
\begin{equation}\label{eq:discrete_measures}
    P(\cdot)\, \overset{\d}{=}\, \sum_{k \geq 1}w_k\delta_{\theta_k}(\cdot)\,,
\end{equation}
where $\w = (w_1, w_2, \dots)$ is a probability vector and $\btheta = (\theta_1, \theta_2, \dots) \in \Theta^\infty$ is a sequence of atoms in $\Theta$. Therefore $\Pi$ can be equivalently described as a joint probability law over the pair $(\w, \btheta)$. In particular we focus on the case where $\Pi$ is the law of a Dirichlet process \citep{ferguson1973bayesian}, which is arguably the most common object in Bayesian nonparametrics: see Section \ref{sec:general} for details. A specific choice of the prior is needed, since some of our results (e.g. Theorems \ref{thm:tail_stick_post} and \ref{thm:number_clusters_post}) show that it is not negligible, even asymptotically: in Section \ref{sec:discussion} we discuss possible extensions to other nonparametric priors.

\subsection*{Main results}

We assume $\mathcal{X} = \Theta = \R^D$ and let $\theta$ be a location parameter, i.e.\ $f(x \mid \theta) = f(x - \theta)$, with $f$ density function on $\R^D$ with respect to the Lebesgue measure. Moreover we let $X_i \simiid P^*f$, where $i = 1,\dots, n$ and the true mixing measure $P^* = \sum_{k = 1}^Kw_k^*\delta_{\theta_k^*}$ has $K < \infty$ support points. Then, if $\Pi$ is the law of a Dirichlet process (see Section \ref{sec:general}), under mild conditions on $f$ our first main result reads
\begin{equation}\label{eq:main_result1}
\Pi\left(\sum_{k > K}w_k > \frac{1}{n^{1/2-\delta}} \mid X_{1:n} \right) \to 0,
\end{equation}
in probability as $n \to \infty$ for every $\delta \in (0, 1/2)$, where $\Pi(\cdot \mid X_{1:n})$ is the posterior distribution of $P$ given $X_{1:n} = (X_1, \dots, X_n)$. This intuitively means that the posterior is adaptive to the true number of components, even if the prior places probability zero on mixing measures with a finite number of support points: indeed atoms beyond the $K$-th one receive a (cumulative) probability no bigger than $n^{-1/2}$, up to terms growing slower than any polynomial. Moreover this implies the following posterior contraction result
\[
\Pi\left(W_1(P^*, P) > \frac{1}{n^{1/2-\delta}} \mid X_{1:n} \right) \to 0,
\]
where $W_1$ denotes the $L^1$-Wasserstein distance. Therefore we show that, at least in a pointwise sense (see Remark \ref{rmk:pointwise_rate} below), Dirichlet process mixtures achieve an almost optimal contraction rate for the mixing measure. We also emphasize that \eqref{eq:main_result1} allows us to obtain an explicit description of the convergence of $P$ to $P^*$: each $\theta_k^*$ is matched (up to vanishing error) with exactly one $\theta_{k'}$, with $k' = 1, \dots, K$. See Theorem \ref{thm:stick_post} and Corollary \ref{crl:convergence_wasserstein} for more precise statements and discussions.

As a second main result, for every $\delta >0$ we also prove that, provided the concentration parameter $\alpha$ is large enough,
\begin{equation}\label{eq:main_result2}
\Pi\left(\sum_{k > \lceil \underline{\beta}\log n\rceil}w_k > \frac{1}{n^{1/2+\delta}}, \sum_{k > \lceil \bar{\beta}\log n\rceil}w_k < \frac{1}{n^{1/2+\delta}} \mid X_{1:n} \right) \to 1
\end{equation}
in probability as $n \to \infty$, for $\underline{\beta} < \bar{\beta}$ depending on $\delta$. Therefore $\mathcal{O}(\log n)$ components are both necessary and sufficient to approximate $P$ with an error smaller than $n^{-1/2}$, qualitatively matching the prior behaviour (Lemma \ref{lemma:weights_priori}). Combining \eqref{eq:main_result2} with \eqref{eq:main_result1} we observe an interesting phase transition: while the first $K$ atoms retain $1-n^{-\gamma}$ portion of the total mass, with $\gamma < 1/2$, any higher order approximation ($\gamma > 1/2$) requires $\mathcal{O}(\log n)$ components. We argue that this phenomenon is due to the nonparametric nature of the model: up to the parametric rate $n^{-1/2}$ the data inform the posterior distribution of $P$, while the prior strongly affects any quantity (e.g.\ the number of clusters, see Theorem \ref{thm:number_clusters_post}) which depends on a finer rate. See Theorem \ref{thm:tail_stick_post} for more details.

As a final application we study truncation methods, where the infinite sum in \eqref{eq:discrete_measures} with weights as in \eqref{GEM} below, is approximated with a finite mixture: this is the basis of popular computational schemes for approximate posterior inference \citep{ishwaran2001gibbs, ishwaran2002approximate}. We prove that a similar phase transition holds: a truncation with $K$ elements contracts to the true density and mixing measure at the same rate of the exact posterior, while $\mathcal{O}(\log n)$ components are necessary and sufficient to match the clustering properties. See Theorem \ref{thm:conv_clustering} for the exact statement.

\subsection*{Structure of the paper}

Section \ref{sec:general} recalls the definition of the Dirichlet process and its main properties used in this paper: the assumptions on the data generating distribution and the kernel $f$ are also stated.  The main technical result (Theorem \ref{thm:evidence_lower_bound}), which provides a tight lower bound on the marginal distribution induced by the model, is discussed in Section \ref{sec:technical}. Section \ref{sec:main_results} contains the main results on the concentration in the latent space:  posterior behaviour of the mixing measure, including the contraction rate in Wasserstein distance (Section \ref{sec:weights}), the phase transition for the mixture weights (Section \ref{sec:transition}) and the implications for clustering (Section \ref{sec:clustering}).  Section \ref{sec:truncation} applies the previous results to provide posterior guarantees for approximations based on truncation, while Section \ref{sec:discussion} discusses future research directions. The proof of Theorem \ref{thm:evidence_lower_bound} is provided in Section \ref{sec:proof_main}, the proofs of all the other results can be found in the Supplementary Material.

\subsection{Related works}\label{sec:related_works}

The convergence rates of mixture models for densities have been extensively studied in both frequentist and Bayesian frameworks \citep{van1996rates, genovese2000rates, ghosal2001entropies}. In case of mixtures driven by a Dirichlet process a series of works proved (nearly) optimal contraction rates under general conditions \citep{ghosal2001entropies, Ghosal2007, scricciolo2011posterior, shen2013adaptive}. See \cite[Chapter 9]{Subhashis2017} for a unifying framework in the case of the Gaussian kernel. 

In the case of the mixing measure, starting from some pioneering works \citep{chen1995optimal, Nguyen2013}, recent papers \citep{ho2016strong, ho2016convergence, heinrich2018strong, guha2021posterior, wei2022convergence,wei2023minimum, bariletto2026convergence} have studied the associated contraction rates in Wasserstein distance. See \cite{nguyen2026optimal} for a recent review. In particular the optimal pointwise convergence rate for the mixing measure is the parametric $n^{-1/2}$, under general settings \citep{heinrich2018strong}: various methods have been shown to achieve such rate, like the minimum distance estimator \citep{heinrich2018strong}, minimum Hellinger distance estimator \citep{ho2020robust}, overfitted mixtures \citep{rousseau2011asymptotic}, mixture of finite mixtures \citep{guha2021posterior}. See also \cite{ishwaran2001bayesian, martin2012convergence} for methods with sub-optimal rates. As far as our knowledge goes, the pointwise rate for Dirichlet process mixtures has not been extensively explored, while the local minimax rate has been shown to be only poly-logarithmic in general \citep{Nguyen2013, guha2021posterior}. Interestingly, mixture models constitute a framework where pointwise and (local) minimax rates can drastically differ \citep{heinrich2018strong, nguyen2026optimal}.

Much less is known for the asymptotic clustering properties. \cite{Miller2013, Miller2014} showed that Dirichlet process mixtures provably overestimate the number of clusters when its concentration parameter (see Section \ref{sec:general}) is fixed: a suitable prior \citep{ascolani2023clustering} or data-dependent choice \citep{ohn2023optimal} of the latter can solve this issue. See \cite{alamichel2024bayesian} for similar inconsistencies for other nonparametric priors. We argue that the lack of results beyond the number of clusters (with the notable exception of \cite{rajkowski2019analysis}) is partially due to the difficulty of relating convergence of the mixing measure to the clustering: indeed the Wasserstein distance is in general too weak for this purpose. We mention \cite{guha2023excess} as a promising work in this direction: the authors use the Orlicz-Wasserstein to prove faster (but still sub-polynomial) convergence in outlier regions of the parameter space for Dirichlet process mixtures of Gaussians.

Finally, some of our results are reminiscent of  \citep{rousseau2011asymptotic}, which studies the asymptotic behaviour of overfitted (but finite) mixtures. Combining their work with this manuscript it becomes clear that in the context of mixture models the choice of the prior can also affect some asymptotic properties, both in the finite- and infinite-dimensional case: see Remarks \ref{rmk:overfitted1} and \ref{rmk:overfitted2} for details and connections with this work. In the case of nonparametric priors it was often suggested that the prior specification might yield a large impact on clustering (e.g.\ \cite[Section 3]{de2013gibbs}), but we are not aware of theoretical results in this direction.

\section{General setting}\label{sec:general}

\subsection{The Dirichlet process}

The Dirichlet process \citep{ferguson1973bayesian} is arguably the most popular object in Bayesian nonparametrics. Following representation \eqref{eq:discrete_measures}, its law can be defined as a probability measure over $(\w, \btheta)$ such that
\begin{equation}\label{GEM}
w_k = v_k\prod_{j = 1}^{k-1}(1-v_j), \quad v_k \simiid \text{Beta}(1, \alpha), \qquad \theta_k \simiid P_0,
\end{equation}
where $\alpha > 0$ is called the concentration parameter and $P_0 \in \mathcal{P}(\Theta)$ the baseline distribution. Then we denote with $P \sim \text{DP}(\alpha, P_0)$ the random measure as in \eqref{eq:discrete_measures} with $(\w, \btheta)$ defined in \eqref{GEM}. Therefore the statistical model
\begin{equation}\label{DPM_model1}
X_i \mid P \simiid Pf(x),  \quad P \sim \text{DP}(\alpha, P_0),
\end{equation}
is called the Dirichlet process mixture model \citep{lo1984class}. It is among the most popular models in Bayesian nonparametrics and its properties have been extensively studied \citep[Chapter 4]{Subhashis2017}.

The construction of the Dirichlet process in \eqref{GEM}, due to \cite{sethuraman1994}, is often called the stick-breaking representation: each $w_k$ is indeed obtained by breaking the remaining part of the unit interval according to the Beta random variable $v_k$. It is also very popular among practitioners, since it leads to natural approximations for tractable posterior inference \citep{ishwaran2001bayesian, ishwaran2002approximate}. In the next lemma we recall some concentration properties used later in the paper: we give the proof in the Supplementary Material for completeness, but analogous results are already known (e.g.\ Theorem 1 in \cite{ishwaran2001bayesian} and Lemma $9.15$ in \cite{Subhashis2017}).
\begin{lemma}\label{lemma:weights_priori}
Let $\Pi$ be the law of $P \sim \text{DP}(\alpha, P_0)$. Then the following holds:
\begin{enumerate}
\item For every $\beta > 0$ we have that
\[
\Pi\left(\sum_{j > \ceil{\beta' \log n}}w_j > \frac{1}{n^\beta} \right) \leq n^{-\alpha\beta},
\]
with $\beta' \geq e\alpha\beta$.

\item For every $\beta > 0$ and $\delta \in (0,1)$, there exists $\beta^* = \beta^*(\alpha, \beta, \delta) > 0$ such that
\[
\Pi\left(\sum_{j > \ceil{\beta' \log n}}w_j <  \frac{1}{n^\beta} \right) \leq n^{-(1-\delta)\alpha \beta}.
\]
with $\beta' < \beta^*$.
\end{enumerate}
\end{lemma}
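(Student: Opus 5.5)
The argument rests on one identity. For $m = \ceil{\beta'\log n}$,
\[
\sum_{j>m} w_j = \prod_{j=1}^{m}(1-v_j),
\]
so that, setting $E_j := -\log(1-v_j)$,
\[
-\log\Big(\sum_{j>m}w_j\Big) = \sum_{j=1}^m E_j .
\]
Since $v_j \sim \text{Beta}(1,\alpha)$, we have $\P(1-v_j \le t) = t^\alpha$. Hence the $E_j$ are i.i.d.\ Exponential with rate $\alpha$, and $S_m := \sum_{j\le m}E_j \sim \text{Gamma}(m,\alpha)$. Both parts therefore reduce to tail bounds for a Gamma variable, which I would obtain by Chernoff's method.

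For part 1, the event $\{\sum_{j>m}w_j > n^{-\beta}\}$ equals $\{S_m < \beta\log n\}$. For $s>0$, Markov's inequality applied to $e^{-sS_m}$ gives
\[
\P(S_m < \beta\log n) \le e^{s\beta\log n}\Big(\frac{\alpha}{\alpha+s}\Big)^m .
\]
I would avoid optimizing $s$ and simply let $s\to\infty$, or bound directly:
\[
\P(S_m<x) \le \frac{(\alpha x)^m}{m!} \le \Big(\frac{e\alpha x}{m}\Big)^m ,
\]
using the standard Gamma CDF bound together with $m!\ge (m/e)^m$. With $x=\beta\log n$ and $m\ge e\alpha\beta\log n$ the base is at most $1$, which is not quite enough by itself. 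To fix this, take instead
\[
\P(S_m<x)\le e^{sx}\Big(\frac{\alpha}{\alpha+s}\Big)^m
\]
with $s$ chosen so that the exponent yields $n^{-\alpha\beta}$. Concretely, write $m=c\log n$; the log-bound per $\log n$ is $s\beta - c\log(1+s/\alpha)$. Choosing $s=\alpha(e-1)$ gives $\alpha\beta(e-1) - c$, which is at most $-\alpha\beta$ as soon as $c\ge e\alpha\beta$. This matches the stated threshold $\beta'\ge e\alpha\beta$ exactly and proves part 1.

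For part 2, the event $\{\sum_{j>m}w_j<n^{-\beta}\}$ equals $\{S_m>\beta\log n\}$, and now $m$ is small relative to $\log n$. For $s<\alpha$,
\[
\P(S_m>x)\le e^{-sx}\Big(\frac{\alpha}{\alpha-s}\Big)^m .
\]
Taking $s=(1-\delta/2)\alpha$ gives the bound
\[
n^{-(1-\delta/2)\alpha\beta}\,(2/\delta)^{m} \le n^{-(1-\delta/2)\alpha\beta + \beta'\log(2/\delta)+o(1)},
\]
where the $o(1)$ absorbs the ceiling. So define $\beta^*$ by $\beta^*\log(2/\delta)=\delta\alpha\beta/2$ (strictly less, to absorb the $o(1)$, or handle the ceiling by using $m\le\beta'\log n+1$, which costs only a constant factor $2/\delta$). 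The exponent is then at most $-(1-\delta)\alpha\beta$. The constant factor can be absorbed by using a slightly smaller split of $\delta$, say $s=(1-\delta/3)\alpha$, for $n$ large; alternatively, state the bound for $n\ge n_0$ or adjust $\beta^*$.

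The only delicate points are the bookkeeping of the ceiling and constants in part 2, and checking that the choice $s=\alpha(e-1)$ reproduces exactly the constant $e\alpha\beta$ in part 1. The rest is a routine Chernoff bound for the Gamma distribution.
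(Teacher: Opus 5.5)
Your proposal is correct and follows the same route as the paper. Both write $-\log\sum_{j>m}w_j=\sum_{j\le m}-\log(1-v_j)\sim\text{Gamma}(m,\alpha)$ and apply Markov's inequality to an exponential moment. The only difference is that you fix the Chernoff parameter ($t=e-1$ in part 1, $s=(1-\delta/2)\alpha$ in part 2), whereas the paper optimizes it ($t=\beta'/(\alpha\beta)-1$ and $t=1-\beta'/(\alpha\beta)$); your choice gives the same threshold $e\alpha\beta$ in part 1.

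Your handling of the ceiling in part 2 is more careful than the paper's. The paper replaces $\lceil\beta'\log n\rceil$ by $\beta'\log n$ and so drops a factor $(1-t)^{-1}$. You can also remove your leftover factor $2/\delta$ for every $n$, not just for $n\ge n_0$:
\begin{itemize}
\item if $\lceil\beta'\log n\rceil=1$, the probability equals $n^{-\alpha\beta}$ exactly;
\item otherwise $\lceil\beta'\log n\rceil\le 2\beta'\log n$, so halving your $\beta^*$ suffices.
\end{itemize}
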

Since $P$ is almost surely discrete by construction, samples from $P$ will yield ties with positive probability. Therefore model \eqref{DPM_model1} can be equivalently written as
\begin{equation}\label{DPM_model2}
X_i \mid P, c_i \simind f(x\mid \theta_{c_i}), \quad c_i \mid P \simiid \text{Cat}(\w),  \quad P \sim \text{DP}(\alpha, P_0),
\end{equation}
where $\text{Cat}(\w)$ denotes the categorical distribution on $\N$. The latent variables $c_{1:n} = (c_1, \dots, c_n)$, called allocation variables, divide the data into distinct groups and are the objects of interest when performing model-based clustering \citep{gormley2023model}. In the following we denote with $\Phi(\cdot)$ and $\Phi(\cdot \mid X_{1:n})$ respectively the prior and posterior law on $c_{1:n}$ induced by model \eqref{DPM_model2}. A commonly studied functional of $c_{1:n}$ is the number of clusters $K_n$, i.e.\ the number of distinct values in $c_{1:n}$. Its concentration properties are summarized in the next lemma \citep{korwar1973contributions}.
\begin{lemma}\label{lemma: Kn_priori}
Let $\Phi$ be the law on $c_{1:n}$ induced by model \eqref{DPM_model2}. Then the following holds:
\begin{enumerate}
\item Let $\delta > 2$. Then we have that
\[
\Phi\biggl(K_n \geq (1+\delta)\alpha\log\left(1+\frac{n-1}{\alpha} \right) \biggr) \leq  cn^{-\alpha},
\]
where $c = c(\alpha) > 0$.

\item Let $\delta \in( 0, 1)$ such that $\delta -(1-\delta)\log(1-\delta) = 3/4$. Then we have that
\[
\Phi\biggl(K_n \leq (1-\delta)\alpha\log\left(1+\frac{n}{\alpha} \right) \biggr) \leq  cn^{-\frac{3}{4}\alpha},
\]
where $c = c(\alpha) > 0$.
\end{enumerate}
\end{lemma}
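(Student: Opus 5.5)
The plan is to exploit the classical representation of $K_n$ under the Dirichlet process (Chinese restaurant process): $K_n \overset{\d}{=} \sum_{i=1}^n B_i$ with $B_i \simind \text{Bernoulli}\bigl(\alpha/(\alpha+i-1)\bigr)$. The mean $\mu_n = \E[K_n] = \sum_{i=1}^n \alpha/(\alpha+i-1)$ satisfies, by comparing the sum with integrals of $x\mapsto \alpha/(\alpha+x)$,
\[
\alpha\log\Bigl(1+\frac{n}{\alpha}\Bigr) \leq \mu_n \leq 1+\alpha\log\Bigl(1+\frac{n-1}{\alpha}\Bigr).
\]
Both parts then follow from exponential Chernoff bounds. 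Independence gives, for every $t\in\R$,
\[
\E e^{tK_n}=\prod_i\bigl(1+p_i(e^t-1)\bigr)\le \exp\bigl(\mu_n(e^t-1)\bigr).
\]

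\emph{Part 1.} Write $m=\alpha\log(1+(n-1)/\alpha)$. I would not optimise $t$, but fix $t=1$. Markov's inequality and $\mu_n\le 1+m$ give
\[
\Phi\bigl(K_n\ge(1+\delta)m\bigr)\le \exp\bigl((e-1)(1+m)-(1+\delta)m\bigr)=e^{e-1}\exp\bigl(-m(\delta+2-e)\bigr).
\]
Since $\delta>2$, we have $\delta+2-e>1$. Hence the bound is at most $e^{e-1}(1+(n-1)/\alpha)^{-\alpha}$, which is at most $c(\alpha)n^{-\alpha}$ with, for instance, $c=e^{e-1}\max(1,\alpha^{\alpha})\,2^{\alpha}$ (using $1+(n-1)/\alpha\ge n/(2\max(1,\alpha))$). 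This is why the hypothesis $\delta>2$ appears.

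\emph{Part 2.} Put $L=\alpha\log(1+n/\alpha)\le\mu_n$. First I check that the stated $\delta$ exists and is unique. The function $g(\delta)=\delta-(1-\delta)\log(1-\delta)$ is continuous and increasing on $(0,1)$, since $g'(\delta)=2+\log(1-\delta)$ on the relevant range, and it goes from $0$ to $1$; so $g(\delta)=3/4$ has a unique root. For $t>0$, Markov's inequality applied to $e^{-tK_n}$, together with $e^{-t}-1<0$ and $\mu_n\ge L$, yields
\[
\Phi\bigl(K_n\le(1-\delta)L\bigr)\le\exp\bigl(L[(1-\delta)t+e^{-t}-1]\bigr).
\]
The task is to choose $t$ so that the bracket is at most $-3/4=-g(\delta)$. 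The optimal choice is $t=-\log(1-\delta)$, which gives the bracket $-(\delta+(1-\delta)\log(1-\delta))$. Evaluating $(1+n/\alpha)^{-\alpha\cdot(\text{exponent})}\le c\,n^{-\alpha\cdot(\text{exponent})}$ then gives the polynomial rate.

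\textbf{Main obstacle.} The whole difficulty sits in the last step: comparing the optimised Chernoff exponent $\delta+(1-\delta)\log(1-\delta)$ with the required $g(\delta)=3/4$ at the prescribed $\delta$. I expect this comparison to fail. For $\delta\in(0,1)$, $-(1-\delta)\log(1-\delta)>0$, so the optimised exponent equals $g(\delta)+2(1-\delta)\log(1-\delta)<g(\delta)=3/4$.

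My first attempt to close the gap would be to harvest extra slack from $\mu_n-L$, but this is $O(1)$ and only affects the constant $c$. Since $B_i$ with $p_i\approx\alpha/i$ obey Poisson-type large deviations with rate exactly $\delta+(1-\delta)\log(1-\delta)$, I do not expect any other choice of $t$ to close the gap. Realistically, this plan delivers $n^{-\alpha(\delta+(1-\delta)\log(1-\delta))}$, which is weaker than the stated $n^{-\frac34\alpha}$. Before finalising Part 2 I would check a lower bound matching this rate, to confirm whether $n^{-\frac34\alpha}$ can hold at all under the condition as written.
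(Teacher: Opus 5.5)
Your Part 1 is correct and is essentially the paper's argument. The paper uses the same ingredients: the independent Bernoulli$(\alpha/(\alpha+k))$ representation, the same integral comparison for $\E[K_n]$, and a Chernoff bound (the multiplicative form at level $3\E[K_n]$, valid for $n$ large). Your fixed tilt $t=1$ does the same job and works for every $n$.

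For Part 2 your diagnosis is right: it is the paper's proof, not your plan, that breaks.

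\emph{Where the paper goes wrong.} The paper applies the lower-tail bound $\bigl(e^{-\delta}/(1-\delta)^{1-\delta}\bigr)^{\mu}$ with $\mu=\alpha\log(1+\frac{n-1}{\alpha})$. It then rewrites this as $(1+\frac{n-1}{\alpha})^{-\alpha[\delta-(1-\delta)\log(1-\delta)]}$. But $\log\bigl(e^{-\delta}/(1-\delta)^{1-\delta}\bigr)=-[\delta+(1-\delta)\log(1-\delta)]$. So the $3/4$ comes from a sign slip, and the argument really gives your exponent $\delta+(1-\delta)\log(1-\delta)$.

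\emph{The matching lower bound holds.} Tilt each Bernoulli$(p_i)$ by $e^{-sB_i}$ with $e^{-s}\uparrow 1-\delta$. Since $\sum_i p_i^2<\infty$, the log-normalizer is $-(1-e^{-s})\E[K_n]+O(1)$. Apply Chebyshev under the tilted law on $\{yL\le K_n\le(1-\delta)L\}$, with your $L$ and some $y<e^{-s}$. This gives, for every $\eta>0$ and $n$ large,
\[
\Phi\bigl(K_n\le(1-\delta)\alpha\log(1+\tfrac{n}{\alpha})\bigr)\ge n^{-\alpha[\delta+(1-\delta)\log(1-\delta)]-\eta}.
\]
At the prescribed root $\delta\approx0.43$ this exponent is about $0.11\,\alpha<\frac34\alpha$. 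So Part 2 as stated is false for every $\alpha>0$.

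\emph{The fix.} Define $\delta$ by $\delta+(1-\delta)\log(1-\delta)=3/4$. This function increases from $0$ to $1$ on $(0,1)$, so the root $\delta\approx0.93$ is unique. Your optimized step $t=-\log(1-\delta)$ then yields exactly $cn^{-\frac34\alpha}$ for all $n$. The later uses in Theorems \ref{thm:number_clusters_post} and \ref{thm:conv_clustering} only need some $\underline{\delta}\in(0,1)$, so they survive.

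A minor point: $g(\delta)=\delta-(1-\delta)\log(1-\delta)$ is not increasing on all of $(0,1)$. It peaks at $1+e^{-2}$ when $\delta=1-e^{-2}$ and then decreases to $1$. Since $g>1$ past the peak, the root of $g=3/4$ is still unique.
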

Lemmas \ref{lemma:weights_priori} and \ref{lemma: Kn_priori} imply, loosely speaking, that the complexity of model \eqref{DPM_model1} grows logarithmically in the sample size $n$. Indeed an approximation with error vanishing polynomially fast requires $\mathcal{O}(\log n)$ components, and the number of clusters concentrates around $\alpha \log (n)$. One purpose of this paper is to deduce the analog of Lemmas \ref{lemma:weights_priori} and \ref{lemma: Kn_priori} a posteriori.

\subsection{Assumptions}
We suppose to collect a vector of observations $X_{1:n} = (X_1, \dots, X_n)$, with $X_i \in \R^D$. We will make the following assumptions:
\begin{enumerate}
\item[(A1)] There exists $Q \in \mathcal{P}(\R^D)$ such that $X_i \simiid Q$. Moreover $Q(\d x) = P^*f (x)\d x$, where
\[
P^*(\cdot) = \sum_{k = 1}^Kw_k^*\delta_{\theta_{k}^*}(\cdot), \quad K < \infty,
\]
with $(w_1^*, \dots, w_K^*)$ probability vector and $\{ f(x-\theta)\}_{\theta}$ location family of probability densities on $\R^D$.
\item[(A2)] $\Theta \subset \R^D$ is a compact set and $\theta_k^* \in \Theta$ for every $k = 1, \dots, K$.
\item[(A3)] The baseline distribution $P_0$ admits a continuous and bounded density $p_0$ on $\Theta$ with respect to the Lebesgue measure, such that $p_0(\theta_k^*) > 0$ for every $k = 1, \dots, K$.
\end{enumerate}
Assumption (A1) means that model \eqref{DPM_model1} is somewhat well-specified, in the sense that the kernel $f$ matches the one of the data generating mechanism: however the latter is a finite mixture, while almost every realization of $P \sim \text{DP}(\alpha, P_0)$ has infinitely many support points. Assumptions $(A2)-(A3)$ are standard in this setting \citep{Nguyen2013, nguyen2026optimal}.

We also need some smoothness conditions on the kernel density $f$. Denoting with $\left\lVert \cdot \right \rVert$ the Euclidean norm, they read as follows:
\begin{enumerate}
\item[(B1)] $f \in C^3(\R^D)$ and there exists a constant  $F$ such that $0 < f(x) < F$ for every $x \in \R^D$.

\item[(B2)] There exist constants $R > 0$ and $\gamma \geq 0$ such that
\[
\left\lVert \nabla \log f(x) \right \rVert \leq R\left(1+\left\lVert x \right \rVert \right)^\gamma, \quad \left\lVert \nabla^2 \log f(x) \right \rVert \leq R\left(1+\left\lVert x \right \rVert \right)^\gamma
\]
for every $x \in \R^D$ and
\[
\int_{\R^D}e^{M\left\lVert x \right\rVert^\gamma}f(x) \, \d x < \infty
\]
for every $M > 0$.

\item[(B3)] There exist strictly positive constants $C, r$ such that
\[
\left\lVert \nabla^g  f(x) \right \rVert \leq \frac{C}{\left(1+\left\lVert x \right \rVert \right)^{D + r}}
\]
for every $x \in \R^D$ and $g = 1, 2, 3$.
\end{enumerate}
Assumptions $(B1)-(B3)$ are regularity conditions satisfied by common smooth location families, e.g.\ multivariate Gaussian (with $\gamma =1$) and Student's t (with $\gamma = 0$) densities. Assumptions $(A1)-(A3)$ and $(B1)-(B3)$ are sufficient to prove nearly optimal contraction rates for densities, e.g.\ in $L^1$ distance: see Lemma \ref{lm:post_consistency} in the Supplementary Material.

\subsection{The main technical result}\label{sec:technical}

All the later results could be stated as $\Pi(B_n \mid X_{1:n}) \to 0$ (or $\Phi(B_n \mid X_{1:n}) \to 0$) in  probability as $n \to \infty$, where $\{B_n\}_n$ is a sequence of measurable sets. Following a standard strategy, we write
\[
\Pi(B \mid X_{1:n}) = \frac{\int_B \prod_{i = 1}^n\frac{P f(X_i)}{P^* f(X_i)}\Pi(\d P)}{\int \prod_{i = 1}^n\frac{P f(X_i)}{P^* f(X_i)}\Pi(\d P)},
\]
and study separately numerator and denominator. Lower bounds with high probability of the latter are called evidence lower bounds, and they are crucial to deduce contraction rates and other asymptotic properties (e.g.\ \cite[Lemma 8.10]{Subhashis2017}). The next theorem, whose proof is given in Section \ref{sec:proof_main}, provides such lower bound in our setting with explicit dependence on $K$, $D$ and $\alpha$. In the following we use the notation $Q^{(n)}$ to denote the product measure $\otimes_{i = 1}^n Q$ on $\R^{nD}$.
\begin{theorem}\label{thm:evidence_lower_bound}
Under assumptions $(A1)-(A3)$ and $(B1)-(B3)$, for every $\epsilon > 0$ there exists $c := c(\epsilon, Q) > 0$ such that
\[
Q^{(n)}\left(\int \prod_{i = 1}^n\frac{P f(X_i)}{P^* f(X_i)}\Pi(\d P) \geq cn^{-DK/2 - (K-1)/2 -\alpha/2} \right) \geq 1-\epsilon,
\]
for every $n$.
\end{theorem}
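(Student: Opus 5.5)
We will lower bound the evidence by restricting the integral to a small neighbourhood of $P^*$ in the parametrization $(\w,\btheta)$. Let $\epsilon_n = n^{-1/2}$ and define
\[
B_n = \Bigl\{ \|\theta_k - \theta_k^*\| \le \epsilon_n,\ |w_k - w_k^*| \le \epsilon_n \ (k \le K-1),\ \sum_{k > K} w_k \le \epsilon_n^{\,2}\Bigr\}.
\]
Here the tail is allowed mass only of order $1/n$, while the first $K$ atoms are matched to the $\theta_k^*$ in some fixed order. Since the integrand is nonnegative, the evidence is at least $\Pi(B_n)\cdot \inf$ over $B_n$ of a suitable quantity. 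We cannot take a pointwise infimum of the likelihood ratio, because it is random. We will therefore use the standard device of \cite[Lemma 8.10]{Subhashis2017}: bound $\int_{B_n} \prod_i \frac{Pf(X_i)}{P^*f(X_i)}\,\bar\Pi_n(\d P)$, with $\bar\Pi_n$ the prior renormalised to $B_n$, from below via Jensen and Chebyshev. This requires, uniformly over $P \in B_n$,
\[
\mathrm{KL}(P^*f, Pf) \lesssim 1/n \quad\text{and}\quad V(P^*f, Pf) = \int P^*f \log^2(P^*f/Pf) \lesssim 1/n .
\]
With these bounds, the log-likelihood ratio averaged over $\bar\Pi_n$ is at least $-C$ with $Q^{(n)}$-probability $1-\epsilon$ for a constant $C = C(\epsilon)$. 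It follows that the evidence is at least $e^{-C}\,\Pi(B_n)$ with probability at least $1-\epsilon$.

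The first step is the prior mass bound $\Pi(B_n) \gtrsim n^{-DK/2 - (K-1)/2 - \alpha/2}$. The atoms are independent of the weights, so (A3) gives $\prod_{k \le K} P_0(\|\theta_k - \theta_k^*\| \le n^{-1/2}) \gtrsim n^{-DK/2}$. For the weights, the map $(v_1,\dots,v_{K-1}) \mapsto (w_1,\dots,w_{K-1})$ is a smooth bijection with Jacobian bounded away from zero near $w^*$, and the Beta$(1,\alpha)$ densities are bounded below there. Hence the box constraint on $(w_k)_{k\le K-1}$ has probability $\gtrsim n^{-(K-1)/2}$. The tail mass $\sum_{k>K} w_k$ equals $\prod_{j\le K}(1-v_j)$, so the tail condition becomes $1 - v_K \le n^{-1}/\prod_{j<K}(1-v_j)$. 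This has probability of order $(1/n)^{\alpha}$, which would give the exponent $\alpha$ rather than $\alpha/2$.

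Matching the stated exponent $\alpha/2$ therefore requires tail mass up to $n^{-1/2}$, i.e.\ $\sum_{k>K} w_k \le \epsilon_n$. The main obstacle is to show that the KL bound $1/n$ survives when mass $t \le n^{-1/2}$ sits on arbitrary atoms $\theta_k \in \Theta$, $k > K$. A first-order bound on the KL only gives $t$, which is too crude. The key observation is that the perturbation $Pf - P^*f$ splits into a part of size $O(n^{-1/2})$ from the first $K$ components and a tail part $t\,(Gf - P_K f)$. The KL divergence is locally quadratic in $\log(Pf/P^*f)$, so it is of order $\chi^2$-type: $\int (Pf - P^*f)^2 / P^*f$. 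The tail contributes $t^2 \int (Gf)^2/P^*f$, which is $O(t^2) = O(1/n)$ provided $\int f(x-\theta)^2 / P^*f(x)\,\d x$ is bounded uniformly for $\theta \in \Theta$ compact. That uniform bound follows from (B2): $\log f(x-\theta) - \log f(x-\theta_1^*)$ is bounded by $R(1+\|x\|)^\gamma\,\mathrm{diam}$, and the exponential moment assumption in (B2) makes the integral finite. In the same way, (B2) controls $\log(Pf/P^*f)$ pointwise by $C(1+\|x\|)^\gamma$, which is used to justify the quadratic expansion of KL and $V$ with a remainder integrable against $P^*f$. The first-$K$ part is handled by a Taylor expansion in $(w,\theta)$ using (B1), (B3) (bounded derivatives of $f$) and (B2), giving $O(\epsilon_n^2)$.

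With tail constraint $\epsilon_n$, the tail factor $1 - v_K \le n^{-1/2}/\prod_{j<K}(1-v_j)$ has prior probability $\asymp n^{-\alpha/2}$. Multiplying the three factors gives the claimed rate, with constants depending on $\epsilon$ and $Q$ only.
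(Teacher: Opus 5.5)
Your proof is correct, but it takes a different route from the paper. Your prior-mass count matches the paper's exactly, and like the paper you set the tail constraint $\sum_{k>K}w_k\le n^{-1/2}$ from the outset, which is where the factor $n^{-\alpha/2}$ comes from.

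\emph{The paper's route.} It Taylor-expands the log-likelihood ratio to second order in $(v_{1:K},\theta_{1:K})$ around $(v^*_{1:K},\theta^*_{1:K})$, leaving the tail parameters free; since $v_K^*=1$, the tail carries no mass at the expansion point. A CLT/Donsker argument shows the first-order coefficients are $O_P(\sqrt n)$, and Glivenko--Cantelli arguments show the second-order ones are $O_P(n)$, both uniformly in the tail parameters. So on the $n^{-1/2}$-box the log-likelihood ratio is bounded below by $-C$ pathwise.

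\emph{Your route.} You replace all the empirical-process control by the Jensen--Chebyshev device of \cite[Lemma 8.10]{Subhashis2017}, plus a deterministic check that $\mathrm{KL}(P^*f,Pf)$ and $V(P^*f,Pf)$ are $O(1/n)$ uniformly on the box. The mechanism is the same in both proofs: the tail direction integrates to zero, so tail mass $t$ enters only at second order. In the paper this appears as the $v_K$-score $B_{n,K}$ being a centred empirical process; in yours it appears as $\mathrm{KL}=O(t^2)$.

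\emph{Making your sketch airtight.} For every $P\in\mathcal{P}(\Theta)$ one has $|\log(Pf/P^*f)(x)|\le \mathrm{diam}(\Theta)\,s(x)=:L(x)$, with $s$ as in \eqref{eq:definition_s}. Set $r=Pf/P^*f$. The elementary inequalities $r-1-\log r\le \tfrac12(r-1)^2\max(1,r^{-2})$ and $\log^2 r\le (r-1)^2\max(1,r^{-2})$ give $\mathrm{KL}\le \tfrac12\int P^*f\,(r-1)^2e^{2L}$ and $V\le \int P^*f\,(r-1)^2e^{2L}$. On the box, $|r-1|\lesssim n^{-1/2}(1+s)e^{\mathrm{diam}(\Theta)s}$. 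The exponential moments in (B2) then give $O(1/n)$.

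\emph{What each buys.} Your argument is more elementary: it needs no Donsker or Glivenko--Cantelli classes and no second derivatives in stick-breaking coordinates, and (B3) is never used. It also shows that Lemma 8.10 already yields the sharp exponent $DK/2+(K-1)/2+\alpha/2$ once the KL is computed directly rather than through Hellinger or $L^1$ bounds. This sharpens the paper's remark that this technique only gives a sub-optimal polynomial rate. The paper's argument proves something stronger: it controls the log-likelihood ratio uniformly over the whole neighbourhood, not just its prior average.
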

The obtained rate is the product of $n^{-DK/2 - (K-1)/2}$, which corresponds to the one of a finite mixture model with $K$ known, and $n^{ -\alpha/2}$, which measures the discrepancy between the nonparametric nature of the model and the (finite) number of parameters needed to describe the data generating mechanism: indeed, the larger $\alpha$ the heavier the tails of the stick-breaking weights (Lemma \ref{lemma:weights_priori}) and the larger the number of clusters (Lemma \ref{lemma: Kn_priori}). Notice that the existence of a (sub-optimal) polynomial rate could be shown with standard techniques (again \cite[Lemma 8.10]{Subhashis2017}) and by Theorem $4.3$ in \cite{hairault2022evidence} there exists $t > 0$ such that
\[
Q^{(n)}\left(\int \prod_{i = 1}^n\frac{P f(X_i)}{P^* f(X_i)}\Pi(\d P) \geq cn^{-DK/2 - (K-1)/2 -t} \right) \to 0,
\]
as $n \to \infty$. Therefore the rate obtained in Theorem \ref{thm:evidence_lower_bound} is tight as a function of $K$ and $D$. Moreover, its main value is to make the dependence of the rate on $\alpha$ explicit: this will allow us to combine it with the prior concentration results stated in Lemmas \ref{lemma:weights_priori} and \ref{lemma: Kn_priori}.
\begin{remark}\label{rmk:overfitted1}
The results of Theorem \ref{thm:evidence_lower_bound} are reminiscent of the case of overfitted mixtures, where $P \sim \Pi_{\bar{K}}$ is defined as follows
\[
P \overset{\d}{=} \sum_{j = 1}^{\bar{K}}w_j\delta_{\theta_j}, \quad (w_1, \dots, w_{\bar{K}}) \sim \text{Dir}\left(\alpha_1, \dots, \alpha_{\bar{K}}\right), \quad \theta_j \simiid P_0,
\]
with $\bar{K} > K$. Then from the proof of Theorem $1$ in \cite{rousseau2011asymptotic} we can deduce that
\[
Q^{(n)}\left(\int \prod_{i = 1}^n\frac{P f(X_i)}{P^* f(X_i)}\Pi_{\bar{K}}(\d P) \geq cn^{-DK/2 - (K-1)/2 -\sum_{j = K + 1}^{\bar{K}}\alpha_j/2} \right) \to 1,
\]
as $n \to \infty$ for some $c > 0$. Therefore the evidence lower bound has the same form with the concentration parameter $\alpha$ replaced by $\sum_{j = K+1}^{\bar{K}}\alpha_j$: both quantities measure how much the model is overfitting the true data generating mechanism. This similarity is particularly striking, since $\Pi_{\bar{K}}$ and $\Pi$ are probability measures on, respectively, a finite- and infinite-dimensional space: see Remark \ref{rmk:overfitted2} for a deeper connection between \cite{rousseau2011asymptotic} and our work.
\end{remark}
The proof of Theorem \ref{thm:evidence_lower_bound} is given in Section \ref{sec:proof_main} and relies on a Taylor expansion of the stick-breaking weights $(v_1, \dots, v_K)$ around the ones induced by the true vector $\w^*$ in (A1). The main difficulty comes from the fact that the coefficients and the remainder term of such expansion depend on the (infinitely many) weights and atoms of $P$: we control their behavior borrowing tools from empirical process theory \citep{van2023weak}, see Lemmas \ref{lemma:glivenko-cantelli} and \ref{lemma:donsker} below.

\section{Main results on concentration in the latent space}\label{sec:main_results}

\subsection{Asymptotic behaviour of the mixing measure}\label{sec:weights}
The first main result concerns the probability assigned, a posteriori, to the elements of the stick-breaking representation beyond the $K$-th ones.
\begin{theorem}\label{thm:stick_post}
Under assumptions $(A1)-(A3)$ and $(B1)-(B3)$, for every $\delta > 0$ it holds that
\[
\Pi\left(\sum_{k > K}w_k > \frac{1}{n^{1/2-\delta}} \mid X_{1:n} \right) \to 0,
\]
as $n \to \infty$ in $Q^{(\infty)}$-probability.
\end{theorem}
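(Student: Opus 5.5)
Write $B_n = \{\sum_{k>K} w_k > n^{-1/2+\delta}\}$ and use the ratio representation of the posterior from Section \ref{sec:technical}. Theorem \ref{thm:evidence_lower_bound} gives, with $Q^{(n)}$-probability at least $1-\epsilon$, the lower bound $c\,n^{-DK/2-(K-1)/2-\alpha/2}$ on the denominator. So it suffices to show
\[
\E_{Q^{(n)}}\Bigl[\int_{B_n}\prod_{i=1}^n \tfrac{Pf(X_i)}{P^*f(X_i)}\,\Pi(\d P)\Bigr] = o\bigl(n^{-DK/2-(K-1)/2-\alpha/2}\bigr),
\]
possibly after intersecting $B_n$ with a high-posterior-probability set. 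Markov's inequality then finishes the argument. The naive bound, expectation $=\Pi(B_n)$, is far too weak: it is only polynomial with exponent governed by $\alpha$. The proof therefore has to combine a prior bound with a likelihood-ratio gain.

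\textbf{Step 1 (localisation).} By Lemma \ref{lm:post_consistency} (density contraction at rate $\epsilon_n = n^{-1/2}$ up to logs), restrict to $\{\|Pf-P^*f\|_1 \lesssim \epsilon_n\}$. By the strong identifiability implied by (B1)–(B3), together with compactness (A2), this set lies in a neighbourhood where the inverse bounds of \cite{heinrich2018strong} / \cite{Nguyen2013} are available. On this neighbourhood we have two facts:
\begin{itemize}
\item[(i)] The mass of $P$ at distance $>\eta$ from $\{\theta_k^*\}$ is $O(\epsilon_n)$.
\item[(ii)] Each $\theta_k^*$ carries total nearby mass close to $w_k^*$.
\end{itemize}
Since each atom $\theta_j$ has prior density (A3), the first $K$ sticks already cover the true atoms with probability bounded below. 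The key point is to show that on $B_n$ the excess mass $\sum_{k>K}w_k$ cannot all sit far away. The case where it sits near true atoms must be handled by an integrated likelihood argument rather than by the identifiability bound, since a second-order (overfitting) inverse bound only gives $n^{-1/4}$.

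\textbf{Step 2 (integrated likelihood on $B_n$).} Condition on $(v_1,\dots,v_K,\theta_1,\dots,\theta_K)$ and on the tail measure $\tilde P=\sum_{k>K}w_k\delta_{\theta_k}/\sum_{k>K}w_k$. Write $P=(1-s)P_K+s\tilde P$ with $s=\sum_{k>K}w_k=\prod_{j\le K}(1-v_j)$. Mimic the proof of Theorem \ref{thm:evidence_lower_bound}: Taylor-expand $\log\prod Pf/P^*f$ in the first $K$ sticks and atoms, and treat $s$ as an extra direction. Control the Donsker terms uniformly via Lemmas \ref{lemma:glivenko-cantelli} and \ref{lemma:donsker}.

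The log-likelihood ratio is then bounded above by $\sqrt n\,\langle Z_n,h\rangle - c\,n\|h\|^2+o_P(1)$, where $h$ collects the perturbations of $(w,\theta)$ and $s$ enters linearly through the score $(\tilde Pf-P_Kf)/P^*f$. Integrating over $h$ in the $DK+K-1$ local directions gives the factor $n^{-DK/2-(K-1)/2}$. The remaining integral over $s$ has prior density of order $s^{\alpha-1}$ near $0$, since $s$ is a product of $K$ factors $(1-v_j)$ with $v_j\sim$ Beta$(1,\alpha)$, so its density near zero has exponent $\alpha-1$ times logs.

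Restricted to $s>n^{-1/2+\delta}$, the gain is $\exp(-c\,n s^2 \kappa(\tilde P))$, where $\kappa(\tilde P)$ is the Fisher-type curvature of the score in direction $\tilde P$. Whenever $\kappa$ is bounded below, this gives $\exp(-c n^{2\delta})$, superpolynomially small, which beats every polynomial.

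\textbf{Step 3 (main obstacle).} The hard part is the degenerate directions where $\kappa(\tilde P)\approx 0$, namely when $\tilde P$ concentrates near the atoms of $P_K$ (and hence near $\theta_k^*$). There the first-order score of $s$ cancels, and a genuine second-order expansion in $(s,\tilde P-\delta_{\theta_k})$ is needed.

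I would handle this case with the Rousseau–Mengersen-type argument (cf.\ Remark \ref{rmk:overfitted1}). If a tail atom lies within $u$ of $\theta_k$, the likelihood identifies $s\,u$ (first order, via a reparametrised location) and $s\,u^2$ (second order). Posterior mass with $s>n^{-1/2+\delta}$ therefore forces $u\lesssim n^{-1/4-\delta/2}$. The prior probability of tail atoms landing that close, weighted by the stick prior, must then compensate the $n^{-\alpha/2}$ budget.

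Concretely, I would split by the distance $u$ and bound each piece by prior mass times the Laplace factor, using the prior density bound (A3) to get a factor $u^{D}$ per close atom. The tail $P$ has infinitely many atoms, so uniformity over $\tilde P$ must again come from the empirical-process lemmas applied to the class $\{f(\cdot-\theta)/P^*f\}_{\theta\in\Theta}$ and its derivatives up to order 3 (B3). Verifying that the combined exponent is strictly smaller than $-DK/2-(K-1)/2-\alpha/2$ by a margin depending on $\delta$ is the step I expect to require the most care.
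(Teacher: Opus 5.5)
\textbf{Verdict: genuine gap.} Your reduction (evidence lower bound, restriction to the $L^1$-ball of Lemma \ref{lm:post_consistency}, then Markov) matches the paper, but the part that actually proves the theorem is missing.

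\emph{Step 2.} You assert a quadratic upper bound $\sqrt n\langle Z_n,h\rangle-cn\|h\|^2+o_P(1)$ on the log-likelihood ratio. It would have to hold uniformly over the infinite-dimensional tail $\tilde P$ and over $s\in(n^{-1/2+\delta},1]$. Lemmas \ref{lemma:glivenko-cantelli}--\ref{lemma:donsker} do not give this. The paper uses them only for a \emph{lower} bound on an $n^{-1/2}$-neighbourhood, where the second-order terms need only be bounded, not negative definite. Moreover, $s$ of order one is not a local perturbation at all.

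\emph{Step 3.} You flag this as the crux, but it is only a plan, and its heuristic is off:
\begin{enumerate}
\item From $su^2\lesssim n^{-1/2}$ and $s>n^{-1/2+\delta}$ you get only $u\lesssim n^{-\delta/2}$, not $n^{-1/4-\delta/2}$.
\item A factor $u^D$ per close atom cannot be used for a tail with infinitely many atoms.
\item You do not identify what pays the $n^{-\alpha/2}$ when $s$ is of order one. That is exactly the regime where the stick prior pays nothing.
\end{enumerate}

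\emph{Step 1.} The polynomial inverse bounds of \cite{heinrich2018strong} and \cite{Nguyen2013} require a bounded number of atoms; for general mixing measures Nguyen's bounds are only logarithmic. So they do not apply to Dirichlet process draws.

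\textbf{The missing idea} is Lemma \ref{lm:lower_bound_consistency}, i.e.\ Theorem 3.10 of \cite{gassiat2014local}. For location families it holds for \emph{every} mixing measure. It bounds $\|Pf-P^*f\|_1$ below by four quantities: the mass outside small balls $B_k$, the errors $|P(B_k)-w_k^*|$, the first moments $\|\int_{B_k}(\theta-\theta_k^*)P(\mathrm{d}\theta)\|$, \emph{and} the second moments $\int_{B_k}\|\theta-\theta_k^*\|^2P(\mathrm{d}\theta)$. On the $L^1$-ball each of these is at most $r_n\asymp(\log n)^q/\sqrt n$. This is precisely the ``$su$, $su^2$'' information you hoped to extract from a second-order expansion, and it is much finer than $W_2^2\lesssim\|Pf-P^*f\|_1$, which is where your $n^{-1/4}$ pessimism came from.

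With this bound, no likelihood upper bound beyond Lemma \ref{lm:post_consistency} is needed. After intersecting with the $L^1$-ball, the ``naive'' prior-mass bound you dismissed suffices. The theorem then reduces to showing $\Pi(B_n\cap\{\|Pf-P^*f\|_1\le r_n\})=o(n^{-DK/2-(K-1)/2-\alpha/2})$.

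The paper proves this using Dirichlet process structure:
\begin{enumerate}
\item The normalized tail is a Dirichlet process independent of $(w_{1:K},\theta_{1:K})$.
\item Its masses $(\tilde p_0,\dots,\tilde p_K)$ are Dirichlet$(\alpha_0,\dots,\alpha_K)$ and independent of the tail means $\tilde F_k$, which have bounded Lebesgue densities.
\item Requiring $R_K\tilde p_0\le r_n$ costs $(r_n/R_K)^{\alpha_0}$, with $\alpha_0=\alpha P_0(B_0)$ close to $\alpha$ for small balls. This is where $n^{-\alpha/2}$ comes from when $R_K$ is not small. The leftover $R_K^{1-\alpha}$ cancels against the generalized Dirichlet density of $w_{1:K}$.
\item Since $R_K>n^{-1/2+\delta}$, some index has $w_k^*-p_k>n^{-\beta}$ with $\beta<1/2$. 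For that index both the first-$K$ mean $F_k$ and the tail mean $\tilde F_k$ must be localized. This yields an extra factor $n^{-D(1/2-\beta)/2}$, which beats the missing $n^{\alpha_k/2}$ once the balls are small enough that $\alpha_k<D(1/2-\beta)$.
\end{enumerate}

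Your Laplace route might be completable, but only by essentially redoing Gassiat--van Handel's uniform analysis of normalized scores. As written, it does not establish the theorem.
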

This crucially implies that the Dirichlet process mixture model is adaptive to the true number of components: indeed, up to the parametric rate $n^{-1/2}$, the probability mass is concentrated on the first $K$ components of the stick-breaking representation. This is not obvious in general, first of all because a finite mixture can be always represented with a larger number of components; moreover the stick-breaking weights in \eqref{GEM} are only ordered in expectation and a priori $\mathcal{O}(\log n)$ components are required to cover $1-n^{-\beta}$ of the mass for every $\beta > 0$ (Lemma \ref{lemma:weights_priori}). We provide below a sketch of the proof (inspired by Section $D.2$ in \cite{hairault2022evidence}), whose more technical aspects are deferred to the Supplementary Material.
\begin{proof}[Sketch of the proof of Theorem \ref{thm:stick_post}]
By Lemma \ref{lm:post_consistency} in the Supplementary Material, the result is equivalent to
\[
\Pi\left(\left\{\sum_{j > K}w_j > \frac{1}{n^{1/2-\delta}}\right\} \cap \left\{||Pf - P^*f||_1 \leq \frac{(\log n)^q}{\sqrt{n}} \right\} \mid X_{1:n} \right) \to 0,
\]
as $n \to \infty$ in $Q^{(\infty)}$-probability, where $||\cdot||_1$ denotes the $L^1$ distance and $q > 0$ is a fixed constant. Moreover, by Lemma \ref{lm:lower_bound_consistency} in the Supplementary Material (which follows directly from Theorem $3.10$ in \cite{gassiat2014local}), there exist $\{B_k\}_k$ arbitrarily small neighborhoods of $\{\theta_k^*\}_k$ and a constant $m > 0$ such that
\begin{equation}\label{eq:lower_bound_main}
\begin{aligned}
||Pf &- P^*f||_1 \\
&\geq m\biggl\{ P(B_0) + \sum_{k = 1}^K\left\lvert P(B_k) - w_k ^* \right\rvert + \sum_{k = 1}^K\left\lvert\left\lvert \int_{B_k}(\theta-\theta_k^*)P(\d \theta) \right\rvert\right\rvert + \frac{1}{2}\sum_{k = 1}^K \int_{B_k}\left\lvert\left\lvert\theta-\theta_k^* \right\rvert\right\rvert^2P(\d \theta) \biggr\},
\end{aligned}
\end{equation}
with $B_0 = \left(\cup_{k = 1}^KB_k\right)^c$. Combined with Theorem \ref{thm:evidence_lower_bound}, the result then follows by proving that there exists $\beta > 0$ such that
\begin{equation}\label{eq:to_prove_main}
\begin{aligned}
\Pi\biggl(&\left\{\sum_{j > K}w_j > \frac{1}{n^{1/2-\delta}}\right\} \cap \left\{P(B_0) + \sum_{k = 1}^K\left\lvert P(B_k) - w_k ^* \right\rvert \leq \frac{(\log n)^q}{m\sqrt{n}} \right\}\\
& \quad\cap \left\{\sum_{k = 1}^K\left\lvert\left\lvert \int_{B_k}(\theta-\theta_k^*)P(\d \theta) \right\rvert\right\rvert +\frac{1}{2}\sum_{k = 1}^K\int_{B_k}\left\lvert\left\lvert\theta-\theta_k^* \right\rvert\right\rvert^2P(\d \theta)\leq \frac{(\log n)^q}{m\sqrt{n}} \right\}\biggr) < n^{-(DK + K + \alpha-1)/2 -\beta},
\end{aligned}
\end{equation}
for $n$ large enough. This is proved in Proposition \ref{prop:prior_bound} of the Supplementary Material.
\end{proof}
We stress that the two main steps of the proof are given by the lower bound \eqref{eq:lower_bound_main}, which comes from \cite{gassiat2014local} and is restricted to location mixtures, and the upper bound on the prior probability in \eqref{eq:to_prove_main}. The latter is greatly simplified by the tractability of the Dirichlet process: for example by \eqref{GEM} it easily follows that
\begin{equation}\label{eq:independence_normalization}
\sum_{k = 1}^Kw_k \delta_{\theta_k} \quad \text{and} \quad \sum_{k>K}\left(\frac{w_k}{1-\sum_{j = 1}^Kw_j} \right)\delta_{\theta_k}
\end{equation}
are independent random measures. Beyond the technical aspects, the result in Theorem \ref{thm:stick_post} greatly relies on the stick-breaking representation \eqref{GEM}. For the sake of intuition, consider the case $K = 1$ with $w_1^* = 1$ and $\theta_1^* = \theta^*$. Approximating $P^*f$ with only the first element requires concentration of $v_1$ (around $1$) and $\theta_1$ (around $\theta^*$). Using only the second component, instead, means concentration for $v_1$ (around $0$), $v_2$ (around $1$) and $\theta_2$ (around $\theta^*$); similarly, a combination of the first two components constrains $v_2$, $\theta_1$ and $\theta_2$. Therefore we expect the posterior distribution to favour the first, more parsimonious, setting: the proof above can be seen as a rigorous extension of this intuitive argument to all possible configurations that approximate the true density.

As an important consequence of Theorem \ref{thm:stick_post}, we can deduce the contraction rate for the mixing measure using the Wasserstein distance 
\[
W_1(P, P') = \inf_{\gamma \in \mathcal{C}(P, P')} \, \int \left \lvert \left \lvert \theta-\theta' \right \rvert \right \rvert \, \gamma(\d \theta, \d \theta'), \quad P, P' \in \mathcal{P}(\Theta),
\]
where $\mathcal{C}(P, P')$ denotes the space of couplings between $P$ and $P'$. The rate is a consequence of a stronger result, which provides the exact way in which $P^*$ is approximated. This is formalized in the next corollary, where we use $\mathcal{S}_K$ to denote the space of permutations of $[K] = (1, \dots, K)$ (i.e.\ the set of bijective functions from $[K]$ to itself).
\begin{corollary}\label{crl:convergence_wasserstein}
Under assumptions $(A1)-(A3)$ and $(B1)-(B3)$, for every $\delta > 0$ it holds that
\[
\Pi \left(\exists \text{ $\sigma \in \mathcal{S}_K$ s.t. } \left \lvert\left \lvert \theta_k^*-\theta_{\sigma(k)} \right \rvert \right \rvert \leq \frac{1}{n^{1/2-\delta}} \text{ and } \left \lvert w_k^*-w_{\sigma(k)} \right \rvert \leq \frac{1}{n^{1/2-\delta}} \text{ for } k = 1, \dots, K \mid X_{1:n}\right) \to 1,
\]
as $n \to \infty$ in $Q^{(\infty)}$-probability. In particular
\[
\Pi\left(W_1(P^*, P) > \frac{1}{n^{1/2-\delta}} \mid X_{1:n} \right) \to 0,
\]
as $n \to \infty$ in $Q^{(\infty)}$-probability.
\end{corollary}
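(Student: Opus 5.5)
We deduce the corollary from Theorem \ref{thm:stick_post}, combined with the two ingredients in its proof sketch: the posterior $L^1$ contraction of Lemma \ref{lm:post_consistency} and the local lower bound \eqref{eq:lower_bound_main}. Fix $\delta>0$. Let $\epsilon_n=(\log n)^q/\sqrt n$ and $\eta_n=n^{-1/2+\delta/2}$. It suffices to show that the event
\[
E_n=\Big\{\textstyle\sum_{k>K}w_k\le \eta_n\Big\}\cap\big\{\|Pf-P^*f\|_1\le \epsilon_n\big\}
\]
is contained, for $n$ large, in the event of the statement. Both events in $E_n$ have posterior probability tending to one in $Q^{(\infty)}$-probability: the first by Theorem \ref{thm:stick_post} applied with $\delta/2$, the second by Lemma \ref{lm:post_consistency}. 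So the whole task is a deterministic argument on $E_n$.

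On $E_n$, the bound \eqref{eq:lower_bound_main} gives four estimates, all of order at most $\epsilon_n/m$: $P(B_0)$, $|P(B_k)-w_k^*|$, $\|\int_{B_k}(\theta-\theta_k^*)P(\d\theta)\|$, and $\int_{B_k}\|\theta-\theta_k^*\|^2P(\d\theta)$. The neighbourhoods $B_k$ are chosen disjoint and small. We decompose $P=P_{\le K}+P_{>K}$, where $P_{\le K}=\sum_{j\le K}w_j\delta_{\theta_j}$, and note that $P_{>K}$ has total mass at most $\eta_n$. We then argue in four steps.

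\textbf{Step 1.} Since $w_k^*>0$ for all $k$, we have $P_{\le K}(B_k)\ge w_k^*-\epsilon_n/m-\eta_n>0$. Hence each $B_k$ contains at least one atom $\theta_j$ with $j\le K$. Because the $B_k$ are disjoint and there are only $K$ such atoms, each $B_k$ contains exactly one, which defines a permutation $\sigma$ with $\theta_{\sigma(k)}\in B_k$.

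\textbf{Step 2.} The weights then satisfy $|w_{\sigma(k)}-w_k^*|\le |P(B_k)-w_k^*|+P_{>K}(B_k)\le \epsilon_n/m+\eta_n$, which is below $n^{-1/2+\delta}$ for large $n$.

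\textbf{Step 3.} For the atoms, we expect the main obstacle: the first-moment term alone allows cancellation. We would instead use the first-moment term together with the small mass outside. Writing
\[
\int_{B_k}(\theta-\theta_k^*)P(\d\theta)=w_{\sigma(k)}(\theta_{\sigma(k)}-\theta_k^*)+\int_{B_k}(\theta-\theta_k^*)P_{>K}(\d\theta),
\]
the last term is bounded by $\mathrm{diam}(\Theta)\,\eta_n$, using compactness from (A2). Dividing by $w_{\sigma(k)}\ge w_k^*/2$ yields $\|\theta_{\sigma(k)}-\theta_k^*\|\lesssim \epsilon_n+\eta_n\le n^{-1/2+\delta}$. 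If this crude bound were insufficient, the second-moment term would give $\|\theta_{\sigma(k)}-\theta_k^*\|^2\lesssim\epsilon_n$ as a fallback, which is weaker. The first-moment route therefore seems to be the right one, and it is where the cutoff $\sum_{k>K}w_k\le \eta_n$ from Theorem \ref{thm:stick_post} is essential.

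\textbf{Step 4.} For the Wasserstein bound we construct a coupling: send mass $\min(w_k^*,w_{\sigma(k)})$ from $\theta_k^*$ to $\theta_{\sigma(k)}$, and transport the remaining mass arbitrarily. This gives
\[
W_1(P^*,P)\le \sum_k\|\theta_k^*-\theta_{\sigma(k)}\|+\mathrm{diam}(\Theta)\Big(\sum_k|w_k^*-w_{\sigma(k)}|+\textstyle\sum_{j>K}w_j\Big)\lesssim n^{-1/2+\delta/2}(\log n)^q,
\]
which is eventually smaller than $n^{-1/2+\delta}$. Since the event of the statement contains $E_n$ for large $n$, both claims follow.
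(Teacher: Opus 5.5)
Your proposal is correct and follows essentially the same route as the paper. You intersect the events from Theorem \ref{thm:stick_post} (applied with $\delta/2$) and Lemma \ref{lm:post_consistency}, and use Lemma \ref{lm:lower_bound_consistency} to match each ball $B_k$ with a unique atom among the first $K$. You then control the weights through the mass term and the locations through the first-moment term, using that the tail mass is at most $n^{-1/2+\delta/2}$, and finish with the same partial-matching coupling for $W_1$. Your pigeonhole argument and the explicit first-moment bound actually supply the details the paper leaves to ``we can also deduce'' and ``an analogous argument''.
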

\begin{remark}\label{rmk:pointwise_rate}
The contraction rate above holds for a fixed $P^*$ and it is therefore pointwise: it is also nearly equal to the optimal pointwise rate $n^{-1/2}$ \citep{heinrich2018strong, nguyen2026optimal}, up to terms growing slower than any polynomial. It is known that the local minimax rate is much slower (e.g.\, \cite[Theorem 3.2]{heinrich2018strong} for the case when an upper bound on $K$ is known), see \cite{nguyen2026optimal} for a discussion on the different types of rates. As far as our knowledge goes, this is the first result proving almost optimal pointwise rate for the mixing measure in the case of models driven by a Dirichlet process: previous (minimax) rates were only poly-logarithmic \citep{Nguyen2013, guha2021posterior}.
\end{remark}

\begin{remark}\label{rmk:overfitted2}
Corollary \ref{crl:convergence_wasserstein}
is the analog of Theorem $1$ in \cite{rousseau2011asymptotic}, which shows that in overfitted mixture models the posterior concentrates on configurations with no more than $K$ components, again up to a remainder of order $n^{-1/2}$. However in the latter case the prior parameters of the Dirichlet distribution are required to be all smaller than $D/2$, while our result does not require any condition on the concentration parameter $\alpha$: this is coherent with the fact that the Dirichlet process can be seen as the limit of a sequence of $M$-variate symmetric Dirichlet distributions with parameters $\alpha/M$ as $M $ diverges \citep{ishwaran2002exact}. See Remark \ref{rmk:lower_bound_alpha} for a further discussion on the possible impact of $\alpha$ asymptotically.
\end{remark}
Corollary \ref{crl:convergence_wasserstein} implies that the Dirichlet process mixture model achieves an almost optimal pointwise rate for the mixing measure. Within the class of Bayesian nonparametric priors, this was previously known for the mixture of finite mixtures model \citep[Theorem 3.1]{guha2021posterior}, which explicitly incorporates the number of components as a parameter. Moreover we show that, at least asymptotically, each $\theta_k^*$ is approximated by exactly one atom of $P$; this is a much stronger result, which has deep consequences on the clustering properties of the model. 

\subsection{The phase transition}\label{sec:transition}
If Theorem \ref{thm:stick_post} considers the stick-breaking weights up to error $n^{-1/2}$, in the next theorem we deal with any finer approximation.
\begin{theorem}\label{thm:tail_stick_post}
Under assumptions $(A1)-(A3)$ and $(B1)-(B3)$, for every $\delta > 0$ there exists $\alpha^* = \alpha^*(D, K, \delta) > 0$ such that for every $\alpha > \alpha^*$ it holds that
\begin{equation}\label{eq:result_tail_to_cite}
\Pi\left(\sum_{k > \lceil \underline{\beta}\log n\rceil}w_k > \frac{1}{n^{1/2+\delta}}, \sum_{k > \lceil \bar{\beta}\log n\rceil}w_k < \frac{1}{n^{1/2+\delta}} \mid X_{1:n} \right) \to 1,
\end{equation}
as $n \to \infty$ in $Q^{(\infty)}$-probability, for some $0 < \underline{\beta} < \bar{\beta}$.

\end{theorem}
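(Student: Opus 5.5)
The plan is to prove the two halves separately. For each, I would show that the posterior probability of the complement event vanishes, using the standard ratio decomposition of Section \ref{sec:technical}. The denominator is controlled by Theorem \ref{thm:evidence_lower_bound}: with $Q^{(n)}$-probability at least $1-\epsilon$ it exceeds $c\,n^{-DK/2-(K-1)/2-\alpha/2}$. For the numerator I would simply bound the likelihood ratio in expectation. Since $\E_{Q^{(n)}}\bigl[\prod_i Pf(X_i)/P^*f(X_i)\bigr]=1$, Fubini gives for any set $B_n$
\[
\E_{Q^{(n)}}\Bigl[\int_{B_n}\prod_{i=1}^n\frac{Pf(X_i)}{P^*f(X_i)}\Pi(\d P)\Bigr]=\Pi(B_n).
\]
By Markov's inequality it therefore suffices to show $\Pi(B_n)=o\bigl(n^{-DK/2-(K-1)/2-\alpha/2}\bigr)$ for each of the two bad events.

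The upper-tail half is the event $\sum_{k>\lceil\bar\beta\log n\rceil}w_k\geq n^{-1/2-\delta}$. By Lemma \ref{lemma:weights_priori}(1) with $\beta=1/2+\delta$ and $\bar\beta\geq e\alpha(1/2+\delta)$, its prior mass is at most $n^{-\alpha(1/2+\delta)}$. We need $\alpha/2+\alpha\delta>DK/2+(K-1)/2+\alpha/2$, i.e.\ $\alpha>(DK+K-1)/(2\delta)$. This gives one constraint defining $\alpha^*$. The constant $\bar\beta$ is then chosen large in terms of $\alpha$.

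The lower-tail half is the event $\sum_{k>\lceil\underline\beta\log n\rceil}w_k\leq n^{-1/2-\delta}$, and here the prior bound alone is not enough. Lemma \ref{lemma:weights_priori}(2) gives prior mass at most $n^{-(1-\delta')\alpha(1/2+\delta)}$ for $\underline\beta<\beta^*$. We need the exponent $(1-\delta')\alpha(1/2+\delta)$ to exceed $DK/2+(K-1)/2+\alpha/2$. Choosing $\delta'$ small, say $(1-\delta')(1/2+\delta)>1/2+\delta/2$, reduces this to $\alpha\delta/2>(DK+K-1)/2$, again true for $\alpha$ large. So with $\alpha^*=\alpha^*(D,K,\delta)$ taken as the maximum of the two thresholds, both numerators have expectation $o$ of the evidence lower bound.

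Combining the two halves: on the event of Theorem \ref{thm:evidence_lower_bound}, the posterior probability of each bad event is at most $c^{-1}n^{DK/2+(K-1)/2+\alpha/2}$ times a numerator. That numerator exceeds $n^{-\eta}$ times its expectation only with probability at most $n^{-\eta}$, for a small $\eta>0$ chosen below the exponent gap. Letting $\epsilon\to0$ gives convergence in $Q^{(\infty)}$-probability. Finally, $\underline\beta<\bar\beta$ holds automatically, since $\beta^*$ from Lemma \ref{lemma:weights_priori}(2) must lie below the threshold of part (1): otherwise the two prior bounds would cover complementary events with total mass less than one. We can also shrink $\underline\beta$ freely.

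The main obstacle is making the lower-tail exponent strictly beat the evidence exponent; this is exactly where the largeness of $\alpha$ is essential. I would check carefully that $\delta'$ can be taken small enough independently of $n$, while still allowing some $\underline\beta>0$ in Lemma \ref{lemma:weights_priori}(2). If the lemma's constants made this delicate, I would restrict the prior computation to the complement of the posterior-negligible set from Theorem \ref{thm:stick_post}. I do not expect this refinement to be needed, since the crude Markov argument loses nothing in the exponent.
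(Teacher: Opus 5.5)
Your proposal is correct and is essentially the paper's proof. Both arguments reduce each half to a prior-mass bound, using the evidence lower bound of Theorem \ref{thm:evidence_lower_bound} together with the identity that the expected numerator equals $\Pi(B_n)$, and then apply Lemma \ref{lemma:weights_priori}(1) with $\bar\beta\geq e\alpha(1/2+\delta)$ and Lemma \ref{lemma:weights_priori}(2) with a small enough $\delta'$. Your thresholds on $\alpha$ match the paper's $(DK+K-1)/(2\beta-1)$ and $(DK+K-1)/(2(1-r)\beta-1)$ with $\beta=1/2+\delta$.

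There is one slip in your Markov step: the numerator should be compared with $n^{\eta}$ times its expectation, not $n^{-\eta}$. Alternatively, do as the paper does and bound $\E\bigl[\mathbbm{1}_{A_n}\Pi(B_n\mid X_{1:n})\bigr]\leq c^{-1}n^{DK/2+(K-1)/2+\alpha/2}\,\Pi(B_n)\to 0$ directly.
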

This result implies that $\mathcal{O}(\log n)$ components are both necessary and sufficient to capture at least $1-n^{-\gamma}$ of the mass of $P$, with $\gamma > 1/2$. 
\begin{remark}\label{rmk:lower_bound_alpha}
For the proof of Theorem \ref{thm:tail_stick_post} we need the concentration parameter $\alpha$ to be fixed larger than some $\alpha^*$, which depends on $D$, $K$ and $\delta$; moreover, according to our calculations, $\alpha^*$ diverges as $\delta$ approaches zero. This phenomenon seems unavoidable with our proof technique, but it is unclear to us whether a constraint on $\alpha$ is truly needed: as already mentioned in Remark \ref{rmk:overfitted2}, even in the finite-dimensional case of overfitted mixtures \citep{rousseau2011asymptotic} the hyperparameters of the prior play a role also asymptotically.
\end{remark}

Combining Theorems \ref{thm:stick_post} and \ref{thm:tail_stick_post} we deduce an interesting phase transition: the first $K$ elements of the stick-breaking representation account for the total mass up to nearly $n^{-1/2}$, while any smaller remainder requires a logarithmic (in $n$) number of components. We argue that this transition is due to the interaction between the data and the prior: up to the parametric rate $n^{-1/2}$ the data inform the mixing measure, while any finer approximation is strongly driven by the prior. Indeed the statement of Theorem \ref{thm:tail_stick_post} is qualitatively similar to the one of Lemma \ref{lemma:weights_priori}, so that prior and posterior behaviour match. Moreover the proof of Theorem \ref{thm:tail_stick_post}, given in the Supplementary Material, consists exactly in showing that the complement of the event in \eqref{eq:result_tail_to_cite} yields prior probability vanishing faster than the rate in Theorem \ref{thm:evidence_lower_bound}.

\subsection{Asymptotic clustering behaviour}\label{sec:clustering}

In this section we focus on the asymptotic properties of the allocation variables $c_{1:n}$, with respect to the posterior $\Phi(\cdot \mid X_{1:n})$ induced by model \eqref{DPM_model2}. The next theorem provides matching upper and lower bounds for the number of clusters.
\begin{theorem}\label{thm:number_clusters_post}
Under assumptions $(A1)-(A3)$ and $(B1)-(B3)$, there exists $\alpha^* = \alpha^*(D, K) > 0$ such that for every $\alpha > \alpha^*$ we have that
\[
\Phi\biggl((1+\bar{\delta})\alpha\log(n) \geq K_n \geq (1-\underline{\delta})\alpha\log(n) \mid X_{1:n}\biggr) \to 1,
\]
as $n \to \infty$ in $Q^{(\infty)}$-probability, where $\underline{\delta} \in (0,1)$ and $\bar{\delta} > 0$.
\end{theorem}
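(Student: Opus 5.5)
The plan is the standard ratio argument for the posterior of $c_{1:n}$. Write
\[
\Phi(B_n \mid X_{1:n}) = \frac{\int \sum_{c_{1:n} \in B_n}\prod_{i=1}^n w_{c_i}\frac{f(X_i-\theta_{c_i})}{P^*f(X_i)}\,\Pi(\d P)}{\int \prod_{i=1}^n\frac{Pf(X_i)}{P^*f(X_i)}\Pi(\d P)}.
\]
Theorem \ref{thm:evidence_lower_bound} bounds the denominator below by $cn^{-DK/2-(K-1)/2-\alpha/2}$ with probability at least $1-\epsilon$.

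For the numerator I take expectations under $Q^{(n)}$ and use Fubini. Since
\[
\E_Q\left[\frac{f(X-\theta)}{P^*f(X)}\right] = \int f(x-\theta)\,\d x = 1
\]
for every $\theta$, the expected numerator equals exactly the prior probability $\Phi(B_n)$. Markov's inequality then shows that the numerator is at most $n^{\eta}\Phi(B_n)$ with probability tending to one, for any $\eta>0$. It therefore suffices to show
\[
\Phi(B_n) = o\bigl(n^{-DK/2-(K-1)/2-\alpha/2-\eta}\bigr)
\]
for $B_n = \{K_n > (1+\bar\delta)\alpha\log n\}$ and for $B_n=\{K_n<(1-\underline\delta)\alpha\log n\}$.

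\textbf{Upper tail.} I use a Chernoff bound in place of Lemma \ref{lemma: Kn_priori}. Under the prior, $K_n=\sum_{i=1}^n \xi_i$ with independent $\xi_i\sim\text{Bern}(\alpha/(\alpha+i-1))$, and $\E K_n\approx\alpha\log n$. The standard bound gives
\[
\Phi\bigl(K_n\ge (1+\bar\delta)\alpha\log n\bigr)\le n^{-\alpha h(\bar\delta)+o(1)}, \qquad h(x)=(1+x)\log(1+x)-x.
\]
Choosing $\bar\delta$ large enough that $\alpha h(\bar\delta) > \alpha/2 + (DK+K-1)/2+\eta$ is always possible, even without a constraint on $\alpha$, because $h$ is unbounded.

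\textbf{Lower tail.} The analogous bound is
\[
\Phi\bigl(K_n\le (1-\underline\delta)\alpha\log n\bigr)\le n^{-\alpha g(\underline\delta)+o(1)}, \qquad g(x)=x+(1-x)\log(1-x).
\]
Here $g(\underline\delta)\uparrow 1$ as $\underline\delta\uparrow 1$. So we need $\alpha g(\underline\delta) > \alpha/2 + (DK+K-1)/2$, that is
\[
\alpha\bigl(g(\underline\delta)-1/2\bigr) > (DK+K-1)/2.
\]
Taking $\underline\delta$ with $g(\underline\delta)>1/2$, for instance close to $1$, this holds as soon as $\alpha > \alpha^* := (DK+K-1)/(2(g(\underline\delta)-1/2))$ plus a margin. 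This is exactly where the condition $\alpha>\alpha^*(D,K)$ enters.

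\textbf{Main obstacle.} The difficulty is the lower tail. The prior penalty of having few clusters is only polynomial, $n^{-\alpha g}$, and this must beat the evidence lower bound, whose exponent itself grows like $\alpha/2$. Hence the margin is only $\alpha(g-1/2)$, and the crude Markov/Fubini step cannot be avoided without stronger tools. I would first check that the Chernoff exponents are sharp enough with the harmonic-sum means $\sum_i \alpha/(\alpha+i-1)=\alpha\log n+O(1)$. If the $\eta$-loss from Markov matters, I would replace it with a $1/\epsilon$ constant bound that holds with probability $1-\epsilon$, which suffices for convergence in probability.
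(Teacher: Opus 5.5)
Your proposal is correct and follows the paper's proof: the evidence lower bound of Theorem \ref{thm:evidence_lower_bound} for the denominator, the Fubini identity that makes the expected numerator equal to the prior probability $\Phi(B_n)$, and Chernoff tail bounds on $K_n$ as in Lemma \ref{lemma: Kn_priori}, with the lower tail being the binding constraint on $\alpha$. The only difference is that the paper fixes the lower-tail exponent at $3/4$, which gives $\alpha^* = 2DK+2(K-1)$, whereas leaving $g(\underline\delta)$ free lets $\alpha^*$ be any value above $DK+K-1$; also, your exponent $g(x)=x+(1-x)\log(1-x)$ is the correct Chernoff one, where the paper's lemma has a sign slip.
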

This means that the number of clusters grows logarithmically in $n$, as in the prior case (see Lemma \ref{lemma: Kn_priori}). The result is also coherent with the previous theorems: indeed, $\mathcal{O}(\log n)$ components are needed to account for $1-n^{-1}$ of the mixing mass by Theorem \ref{thm:tail_stick_post}. We believe that this phenomenon arises, intuitively, since the number of clusters is largely driven by the prior. In Lemma \ref{lm:preliminary_cn} of the Supplementary Material we also show that each $c_i$ must be smaller than $\ceil{\beta \log n}$, for some $\beta > 0$; therefore the components of the mixture with at least one allocation variable must belong to the first $\mathcal{O}(\log n)$ elements of the stick-breaking construction. We also stress that $\bar{\delta}$ and $\underline{\delta}$ can be chosen independently from $\alpha$: this implies that the aymptotic growth of the number of clusters depends on $\alpha$ and therefore on the exact prior specification.

Theorem \ref{thm:number_clusters_post} also complements the results in \cite{Miller2013, Miller2014}, which showed that $K_n > K$ almost surely as $n \to \infty$, by providing the exact rate of growth of the number of clusters. Even if such result was described as a lack of consistency for clustering and therefore as a negative feature of Dirichlet process mixture models, with the next theorem we argue that the situation is more nuanced.
\begin{theorem}\label{thm:residual_clustering}
Under assumptions $(A1)-(A3)$ and $(B1)-(B3)$, for every $\delta >0 $ it holds that
\[
\Phi\biggl( \frac{1}{n}\sum_{i = 1}^n\mathbbm{1}_{\{ c_i > K\}} > \frac{1}{n^{1/2-\delta}} \mid X_{1:n}\biggr) \to 0,
\]
as $n \to \infty$ in $Q^{(\infty)}$-probability.
\end{theorem}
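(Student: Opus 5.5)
The plan is to reduce the statement to Theorem \ref{thm:stick_post} by conditioning on the mixing measure. Under model \eqref{DPM_model2}, the joint posterior of $(P, c_{1:n})$ given $X_{1:n}$ factorizes: conditionally on $P$ (i.e.\ on $(\w,\btheta)$) and the data, the allocation variables are independent, with
\[
\Phi(c_i = k \mid P, X_{1:n}) = \frac{w_k f(X_i - \theta_k)}{Pf(X_i)}.
\]
Writing $N_n = \sum_{i=1}^n \mathbbm{1}_{\{c_i > K\}}$, we have $\Phi(N_n/n > n^{-1/2+\delta} \mid X_{1:n}) = \int \Phi(N_n/n > n^{-1/2+\delta}\mid P, X_{1:n})\,\Pi(\d P\mid X_{1:n})$. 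We will bound the integrand on a set $A_n$ of posterior probability tending to one, and bound it trivially by one on $A_n^c$.

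The set $A_n$ will encode closeness of $P$ to $P^*$. Take $A_n$ to be the event in Corollary \ref{crl:convergence_wasserstein} with $\delta/2$, intersected with $\{\sum_{k>K}w_k \le n^{-1/2+\delta/2}\}$ from Theorem \ref{thm:stick_post}. Both have posterior probability tending to one in $Q^{(\infty)}$-probability. On $A_n$, the first $K$ atoms are within $n^{-1/2+\delta/2}$ of a permutation of the true atoms, with matching weights, and the tail mass $R_P = \sum_{k>K} w_k$ is at most $n^{-1/2+\delta/2}$.

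The key step is to bound $\E[N_n \mid P, X_{1:n}] = \sum_i \frac{\sum_{k>K} w_k f(X_i-\theta_k)}{Pf(X_i)}$ uniformly on $A_n$. The numerator is at most $F R_P$ by (B1). For the denominator, $Pf(X_i) \ge \sum_{k\le K} w_k f(X_i - \theta_k)$, and on $A_n$ this is close to $P^*f(X_i)$. By (B2), $|\log f(x-\theta) - \log f(x-\theta')| \le R(1+\|x\|+C_\Theta)^\gamma\|\theta-\theta'\|$, and this is $o(1)$ uniformly in $i\le n$ on an event of high $Q^{(n)}$-probability, since $\max_i \|X_i\|^\gamma$ grows at most logarithmically by the exponential-moment condition. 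Hence $Pf(X_i)\ge \tfrac12 P^*f(X_i)$ for all $i$, and
\[
\E[N_n\mid P,X_{1:n}] \le 2F R_P \sum_{i=1}^n \frac{1}{P^*f(X_i)}.
\]

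This last sum is the main obstacle: $1/P^*f(X_i)$ need not be integrable under $Q$, because $\int (1/P^*f)\,\d Q = \int \d x = \infty$. I would try to avoid it by working directly with the ratios $f(X_i-\theta_k)/Pf(X_i)$ and truncating on the data. On $\{\|X_i\|\le M_n\}$, with $M_n$ chosen so that $\inf_{\|x\|\le M_n}P^*f(x)\ge n^{-\delta/4}$ (possible for Gaussian-type tails when $M_n$ grows like a power of $\log n$, using the lower bound on $f$ given by (B2) integrated along rays), each term is at most $2F R_P n^{\delta/4}$. The number of indices with $\|X_i\|>M_n$ is $o(n^{1/2})$ with high probability, because $Q(\|X\|>M_n)$ decays faster than any polynomial by the exponential moment in (B2). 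This gives $\E[N_n\mid P,X_{1:n}] \le n^{1/2+3\delta/4}$ on $A_n$ for large $n$, and conditional Markov's inequality then yields $\Phi(N_n > n^{1/2+\delta}\mid P,X_{1:n}) \le n^{-\delta/4}\to0$ uniformly on $A_n$. Combining the three pieces gives the claim.

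If the tail truncation fails for heavy-tailed $f$ (e.g.\ $\gamma=0$), the fallback is to handle the ratios through the local expansion around $\theta_k^*$ used in the proof of Theorem \ref{thm:stick_post}. Alternatively, one could appeal to the supplementary Lemma \ref{lm:preliminary_cn}-type bounds on the $c_i$.
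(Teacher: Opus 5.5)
Your reduction is sound: given $(P,X_{1:n})$ the $c_i$ are independent with the stated probabilities, so it suffices to bound $\E[N_n\mid P,X_{1:n}]$ on a set of posterior probability tending to one and then apply Markov. The key estimate, however, fails as written. After bounding the numerator by $FR_P$, you need simultaneously $\inf_{\lVert x\rVert\le M_n}P^*f(x)\ge n^{-\delta/4}$ and $n\,Q(\lVert X\rVert>M_n)=o(n^{1/2})$. For the Gaussian kernel these two requirements are incompatible.

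Take $D=K=1$, $P^*=\delta_0$, $f=\phi$. The first condition forces $M_n\le\sqrt{(\delta/2)\log n}$. Mills' ratio then gives $Q(|X|>M_n)\gtrsim n^{-\delta/4}/\sqrt{\log n}$, so roughly $n^{1-\delta/4}$ observations are outliers. Bounding their terms by one, your upper bound on $\E[N_n\mid P,X_{1:n}]$ is of order $n^{1-\delta/4}$ up to logarithms. This is not $o(n^{1/2+\delta})$ for $\delta<2/5$.

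Changing the threshold to $n^{-\eta}$ does not help. The inliers require $\eta<\delta/2$ and the outliers require $\eta>1/2-\delta$, which is impossible for $\delta\le 1/3$. The faulty claim is that $Q(\lVert X\rVert>M_n)$ is superpolynomially small. With $\gamma=1$, (B2) only yields $Q(\lVert X\rVert>M)\le C_a e^{-aM}$, which is $n^{-o(1)}$ at $M\asymp\sqrt{\log n}$. Conversely, $\gamma=0$ is the easy case, not the hard one, since then $f(x-\theta)/f(x-\theta')\le e^{R\,\mathrm{diam}(\Theta)}$ uniformly in $x$.

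The missing idea is to keep numerator and denominator together. Since all atoms lie in $\Theta$ and $\sum_{k\le K}w_k=1-R_P$,
\[
\frac{\sum_{k>K}w_kf(X_i-\theta_k)}{Pf(X_i)}\le\frac{R_P}{1-R_P}\,\frac{\sup_{\theta\in\Theta}f(X_i-\theta)}{\inf_{\theta\in\Theta}f(X_i-\theta)}\le\frac{R_P}{1-R_P}\,e^{Bs(X_i)}.
\]
Here $s$ and $B=\mathrm{diam}(\Theta)$ are as in the proof of Lemma \ref{lemma:glivenko-cantelli} (taking the supremum over the convex hull of $\Theta$ if needed), where $\int e^{Bs}\,\d Q<\infty$ is established. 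By the strong law, $\sum_{i}e^{Bs(X_i)}=O(n)$ almost surely. Theorem \ref{thm:stick_post} gives the event $\{R_P\le n^{-1/2+\delta/2}\}$ posterior probability tending to one. On it, $\E[N_n\mid P,X_{1:n}]\lesssim n^{1/2+\delta/2}$ uniformly in $P$, and Markov gives $O(n^{-\delta/2})$. The Wasserstein matching and the control of $\max_i\lVert X_i\rVert$ then become unnecessary.

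The paper instead avoids likelihood ratios altogether. It works on the joint space of $(P,c_{1:n})$, where the evidence lower bound of Theorem \ref{thm:evidence_lower_bound} reduces the claim to a prior statement. Under the prior, the indicators $\mathbbm{1}_{\{c_i>K\}}$ are i.i.d.\ Bernoulli$(R_P)$ given $P$, independently of the data. A Chernoff bound then makes the prior probability of the bad event intersected with $\{R_P<n^{-1/2+\delta/2}\}$ at most $e^{-2\sqrt{n}}$, which beats the polynomial factor $n^{DK/2+(K-1)/2+\alpha/2}$. Your repaired route needs only Theorem \ref{thm:stick_post} and (B2) and yields an explicit rate. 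The paper's route invokes the evidence bound once more but never has to control the data-dependent kernel ratios.
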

Theorem \ref{thm:residual_clustering} shows that the proportion of observations allocated to components after the $K$-th one vanishes polynomially fast. We deduce that the posterior clustering features $K$ big clusters, each corresponding to one component of $P^*$, and a logarithmic (in $n$) number of clusters whose overall proportion tends to zero. Therefore we argue that the posterior clustering induced by model \eqref{DPM_model2} is actually very close to the one induced by the true mixing measure $P^*$, up to clusters with vanishing relevance: the inconsistency noted by \cite{Miller2014, Miller2017}, then, is due to the very peculiar nature of the number of groups as a summary of the clustering.

\section{Posterior guarantees for approximations based on truncation}\label{sec:truncation}

Working directly with the posterior of model \eqref{DPM_model1} can be challenging, since $P$ is an infinite dimensional object: a popular solution \citep{ishwaran2001gibbs, ishwaran2002approximate} is therefore to consider a finite dimensional approximation. Given a truncation level $N \geq 1$, the latter is defined as
\begin{equation}\label{eq:trunc_model}
X_i \mid \tilde{P} \simiid \tilde{P}f(x), \quad \tilde{P} \overset{\text{d}}{=} \sum_{k = 1}^{N+1}\tilde{w}_k\delta_{\tilde{\theta}_k},
\end{equation}
where $\tilde{w}_k$ and $\tilde{\theta}_{k'}$ are as in \eqref{GEM}, with $k = 1, \dots, N$ and $k' = 1, \dots, N+1$, while $\tilde{w}_{N+1} = 1-\sum_{k = 1}^N\tilde{w}_k$. Coherently with the previous notation, we will use $\tilde{\Pi}_N(\cdot)$ and $\tilde{\Pi}_N(\cdot \mid X_{1:n})$ to denote the associated prior and posterior distributions on $\mathcal{P}(\Theta)$.

Thus model \eqref{eq:trunc_model} approximates \eqref{DPM_model1} by replacing the infinite dimensional $P$ with its truncated version $\tilde{P}$. The latter procedure is usually justified based on results similar to Lemma \ref{lemma:weights_priori}, by showing that the $L^1$ difference of the associated marginal distributions decays exponentially fast in the truncation level $N$ (see e.g.\ Theorems $1$ and $2$ in \cite{ishwaran2001gibbs} or Theorem $1$ and Corollary $1$ in \cite{ishwaran2002approximate}). However, as far as our knowledge goes, no guarantees are known a posteriori: in this section we use the results in Theorems \ref{thm:stick_post} and \ref{thm:tail_stick_post} to fill this gap. 

First of all, under assumptions $(A1)-(A3)$ it is clear that any $N \geq K$ would suffice to recover a nearly optimal contraction rate for densities. The next corollary, which we state for completeness and can be also deduced with similar arguments as in \cite{rousseau2011asymptotic} (see also \cite[Section $3.1$]{guha2021posterior}), shows that the same holds for convergence of the mixing measure.
\begin{corollary}\label{crl:convergence_wasserstein_trunc}
Under assumptions $(A1)-(A3)$ and $(B1)-(B3)$, for every $N \geq K$ and $\delta > 0$ it holds that
\[
\tilde{\Pi}_N\left(W_1(P^*, \tilde{P}) > \frac{1}{n^{1/2-\delta}} \mid X_{1:n} \right) \to 0,
\]
as $n \to \infty$ in $Q^{(\infty)}$-probability.
\end{corollary}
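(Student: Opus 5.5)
The plan is to follow the strategy of Theorem \ref{thm:stick_post} and Corollary \ref{crl:convergence_wasserstein}, now for the truncated prior $\tilde{\Pi}_N$. This prior is a finite overfitted mixture with $N+1$ atoms and stick-breaking weights whose last weight absorbs the remaining mass. We write the posterior as a ratio of integrated likelihood ratios and handle the numerator and the denominator separately.

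\emph{Denominator.} We need an evidence lower bound of the form
\[
\int \prod_{i=1}^n \frac{\tilde{P}f(X_i)}{P^*f(X_i)}\tilde{\Pi}_N(\d \tilde{P}) \geq c\, n^{-DK/2-(K-1)/2-\kappa_N/2}
\]
with $Q^{(n)}$-probability at least $1-\epsilon$, for some finite constant $\kappa_N$.
\begin{itemize}
\item We rerun the proof of Theorem \ref{thm:evidence_lower_bound} with the first $K$ sticks $(v_1,\dots,v_K)$ Taylor expanded around the values induced by $\w^*$, and the atoms $\tilde\theta_1,\dots,\tilde\theta_K$ localized in $n^{-1/2}$ balls around $\theta^*_k$.
\item The residual mass $1-\sum_{k\le K}\tilde w_k = \prod_{j\le K}(1-v_j)$ is forced below $n^{-1/2}$ by restricting $v_K$ to lie within $n^{-1/2}$ of $1$. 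When $N=K$ this costs $n^{-\alpha/2}$, because the Beta$(1,\alpha)$ density near $1$ behaves like $(1-v)^{\alpha-1}$. When $N>K$, the sticks $v_{K+1},\dots,v_N$ are unconstrained.
\item The empirical process controls (Lemmas \ref{lemma:glivenko-cantelli} and \ref{lemma:donsker}) apply verbatim, since the remaining atoms $\tilde\theta_{K+1},\dots,\tilde\theta_{N+1}$ live in the compact $\Theta$.
\end{itemize}
This gives the bound with $\kappa_N=\alpha$ for every $N\ge K$.

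\emph{Numerator.} By posterior consistency for densities, which holds for $\tilde\Pi_N$ by the same argument as Lemma \ref{lm:post_consistency} (prior mass and entropy are only easier in finite dimension), we may intersect with $\{\|\tilde P f-P^*f\|_1\le (\log n)^q/\sqrt n\}$. The lower bound \eqref{eq:lower_bound_main} from \cite{gassiat2014local} is a deterministic statement about measures and holds for $\tilde P$. So it suffices to show the analog of \eqref{eq:to_prove_main}: the $\tilde\Pi_N$-probability of
\[
\Bigl\{\sum_{k>K}\tilde w_k> n^{-1/2+\delta}\Bigr\}\cap\{\text{the moment constraints of \eqref{eq:to_prove_main}}\}
\]
is $o(n^{-(DK+K+\alpha-1)/2-\beta})$. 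Once this holds, the argument of Corollary \ref{crl:convergence_wasserstein} applies. On the high-posterior set the atoms outside $\cup_k B_k$ and the tail components carry mass at most $n^{-1/2+\delta}$. Each $B_k$ then contains essentially one of the first $K$ atoms, which gives the matching permutation $\sigma$, and the first-moment constraints force $\|\tilde\theta_{\sigma(k)}-\theta_k^*\|\lesssim n^{-1/2+\delta}$. Coupling the matched atoms and sending the residual mass anywhere in the compact $\Theta$ yields $W_1(P^*,\tilde P)\lesssim n^{-1/2+\delta}$.

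The main obstacle is the prior bound replacing Proposition \ref{prop:prior_bound}. If the proof of Proposition \ref{prop:prior_bound} only uses the joint law of $(v_1,\dots,v_M,\theta_1,\dots,\theta_{M+1})$ for finitely many indices, together with the independence \eqref{eq:independence_normalization}, it transfers directly.
\begin{itemize}
\item Under $\tilde\Pi_N$ the laws of the first $N$ sticks and the first $N+1$ atoms coincide with those of the DP. The only change is that $\tilde w_{N+1}$ collects the whole tail, which plays the role of the normalized tail measure in \eqref{eq:independence_normalization}, now a point mass at an independent $P_0$ atom.
\item We then enumerate the finitely many configurations, meaning which of the $N+1$ atoms fall in each $B_k$. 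For any configuration other than the first $K$ atoms matched to distinct $B_k$'s, we show it costs an extra polynomial factor.
\item For example, if two atoms share the $B_k$ mass, their total must be close to $w^*_k$ while the second-moment constraint pins both to within $n^{-1/4}$. In general, a configuration with $k>K$ carrying mass more than $n^{-1/2+\delta}$ needs at least one additional stick or atom to be localized.
\end{itemize}
This extra cost should exceed the $n^{-\alpha/2}$ budget paid by the good configuration. This mirrors the overfitted-mixture computation of \cite{rousseau2011asymptotic}, but the comparison of exponents against $\alpha/2$ still has to be carried out configuration by configuration.
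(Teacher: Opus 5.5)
The paper gives no proof of this corollary beyond a pointer to the overfitted-mixture argument of \cite{rousseau2011asymptotic}. Your architecture (evidence lower bound, prior bound on the bad set, then the coupling from Corollary~\ref{crl:convergence_wasserstein}) follows that route. Your evidence lower bound with $\kappa_N=\alpha$ is valid; the paper uses the same one in Lemma~\ref{lm:preliminary_cn_truncated}. The final coupling step is also fine.

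The gap is the prior bound, and it is not bookkeeping: Proposition~\ref{prop:prior_bound} does not transfer. For the Dirichlet process, every configuration pays $n^{-\alpha/2}$ because the normalized tail beyond $K$ is again a Dirichlet process. Hence $(\tilde p_0,\dots,\tilde p_K)\sim\text{Dirichlet}(\alpha_0,\dots,\alpha_K)$ with $\alpha_0\approx\alpha$, and $\{R_K\tilde p_0\le r_n\}$ costs $(r_n/R_K)^{\alpha_0}$ in the second step of that proof. Under $\tilde\Pi_N$ with $N=K$, the normalized tail is the point mass $\delta_{\tilde\theta_{K+1}}$. Then $(\tilde p_0,\dots,\tilde p_K)$ is a vertex of the simplex, there is no $\alpha_0$-penalty, and the last atom can fit a true component as cheaply as any other.

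Concretely, consider the set where $v_1\le n^{-1/2}$ and $v_2,\dots,v_K$, $\tilde\theta_2,\dots,\tilde\theta_{K+1}$ are localized at scale $n^{-1/2}$ around values reproducing $P^*$. It satisfies every constraint in \eqref{eq:to_prove_main} and has $\sum_{k>K}\tilde w_k=\tilde w_{K+1}$ of order one. Its prior mass is of order $n^{-(DK+K)/2}$, which exceeds $n^{-(DK+K+\alpha-1)/2}$ as soon as $\alpha>1$. So for $\alpha>1$ your target bound is false, and $n^{-\alpha/2}$ is not the sharp evidence: the extra factor is at least $n^{-\min(\alpha,1)/2}$. ``Mass beyond index $K$ vanishes'' is therefore the wrong intermediate statement. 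The argument must be permutation invariant, showing that \emph{some} $K$ of the $N+1$ atoms carry all but $n^{-1/2+\delta}$ of the mass.

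The comparison you defer to a configuration-by-configuration check is where the argument actually breaks. Take $N=K$ and localize $v_1,\dots,v_{K-1}$ and $\tilde\theta_1,\dots,\tilde\theta_{K-1}$ as usual. Leave $v_K\in[1/4,3/4]$ free, put $\tilde\theta_K$ in the shell $\lVert\tilde\theta_K-\theta_K^*\rVert\asymp n^{-1/4}$, and put $\tilde\theta_{K+1}$ within $n^{-1/2}$ of the point cancelling the first moment around $\theta_K^*$. On this set:
the mass, first-moment and second-moment terms of $\tilde Pf-P^*f$ are all $O(n^{-1/2})$, so it lies in the $1/n$ Kullback--Leibler neighbourhood where the likelihood ratio is of order one;
its prior mass is of order $n^{-(DK+K-1)/2-D/4}$;
and $W_1(P^*,\tilde P)\asymp n^{-1/4}$.

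This mass beats your budget $n^{-\alpha/2}$ whenever $\alpha>D/2$. It beats every emptying configuration (prior factor $n^{-\min(\alpha,1)/2}$) when $D=1$ and $\alpha>1/2$. This is the merging regime of \cite{rousseau2011asymptotic}, whose emptying result requires Dirichlet parameters below $D/2$. Here the effective parameters are $1$ (emptying via $v_k\approx 0$) and $\alpha$ (emptying all trailing atoms via $v_K\approx 1$).

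Hence your plan cannot give the $n^{-1/2+\delta}$ rate for all $\alpha$ and $N\ge K$. For $D=1$, $N=K$, $\alpha>1/2$, one should expect the posterior $W_1$ rate to degrade to $n^{-1/4}$. A correct statement needs a condition linking $\alpha$, $D$ and $N-K$. A rough exponent count suggests $\alpha<D(N+1-K)/2$ when $D\le 2$. No restriction seems needed for $D\ge 3$, since there merging an extra atom (factor $n^{-D/4}$) always costs more than emptying it through $v_k\approx 0$ (factor $n^{-1/2}$). The paper's one-line justification does not address this either.
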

We investigate then the clustering behaviour. Denoting with $\tilde{c}_{1:n}$ the vector of allocation variables induced by model \eqref{eq:trunc_model} and with $\tilde{\Phi}_N(\cdot)$ its prior law, it is natural to quantify the difference in terms of clustering as a suitable distance between the posterior distribution on the allocation variables  of the original model $\Phi(\cdot \mid X_{1:n})$ and $\tilde{\Phi}_N(\cdot \mid X_{1:n})$. The next theorem provides an explicit result in the case of the total variation distance $\left \lvert \left \lvert \cdot \right\rvert \right\rvert_{TV}$.

\begin{theorem}\label{thm:conv_clustering}
Under assumptions $(A1)-(A3)$ and $(B1)-(B3)$, there exists $\beta' = \beta'(D, K) > 0$ such that for every $N \geq \lceil \beta' \log n \rceil$ we have that
\[
\left \lvert \left \lvert \Phi \left(c_{1:n} \mid X_{1:n} \right)-\tilde{\Phi}_N \left(\tilde{c}_{1:n} \mid X_{1:n}  \right) \right\rvert \right\rvert_{TV} \to 0,
\]
in $Q^{(\infty)}$- probability as $n \to \infty$. Moreover, there exist $\alpha^* = \alpha^*(D, K)$ and $\beta = \beta(\alpha^*)$ such that if $\alpha > \alpha^*$ and $N < \lceil \beta \log n \rceil$ we have that
\[
\left \lvert \left \lvert \Phi \left(c_{1:n} \mid X_{1:n} \right)-\tilde{\Phi}_N \left(\tilde{c}_{1:n} \mid X_{1:n}  \right) \right\rvert \right\rvert_{TV} \to 1,
\]
in $Q^{(\infty)}$- probability as $n \to \infty$.
\end{theorem}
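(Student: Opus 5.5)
The proof has two parts, and both rest on one observation: the truncated model \eqref{eq:trunc_model} is the exact model with every allocation index $>N$ merged into index $N+1$. More precisely, for a fixed configuration $c_{1:n}$ with $\max_i c_i \leq N$, the joint law of $(c_{1:n}, X_{1:n})$ under \eqref{DPM_model2} and under the truncated model differ only through the prior of the weights. Under the exact model,
\[
\Phi(c_{1:n}) = \E\Bigl[\prod_k w_k^{n_k}\Bigr].
\]
Under the truncation, the same expression holds with $w_{N+1}$ replaced by $\tilde w_{N+1} = \prod_{j\le N}(1-v_j)$, which only matters if $N+1$ is used. The atoms are i.i.d.\ $P_0$ in both models, so the integrated likelihood given $c_{1:n}$ is identical.

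\textbf{Positive part.} Consider configurations with $\max_i c_i \le N$. On these, the exact prior $\Phi(c_{1:n})$ and the truncated prior $\tilde\Phi_N(c_{1:n})$ coincide. This is because $w_1,\dots,w_N$ are the same functions of $v_1,\dots,v_N$, and index $N+1$ is not used. The same then holds for the unnormalised posteriors. It follows that
\[
\|\Phi(\cdot\mid X_{1:n})-\tilde\Phi_N(\cdot\mid X_{1:n})\|_{TV} \le \Phi(\max_i c_i > N\mid X_{1:n}) + \tilde\Phi_N(\max_i \tilde c_i = N+1\mid X_{1:n}) + |1-Z_N/Z|.
\]
Here $Z$ and $Z_N$ are the two evidences, and the ratio $Z_N/Z$ is controlled by the same two tail probabilities. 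The first tail probability tends to zero by Lemma \ref{lm:preliminary_cn} once $N \ge \lceil\beta'\log n\rceil$ with $\beta'$ large.

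For the second, I use the evidence route. The numerator is the prior mass of allocations that use index $N+1$. It is bounded by $n\,\E[\tilde w_{N+1}] = n\,(\alpha/(1+\alpha))^N$ times a bound on the likelihood ratio. The bound $\sup_P \prod_i Pf(X_i)/P^*f(X_i) \le e^{O(\log n)}$ holds with high probability; if that is awkward, I would instead mimic the proof of Lemma \ref{lm:preliminary_cn} (Lemma \ref{lemma:weights_priori}(1) plus Theorem \ref{thm:evidence_lower_bound}). The denominator is bounded below by Theorem \ref{thm:evidence_lower_bound}, applied to the truncated prior, which is legitimate because $N\ge K$ and the proof only perturbs the first $K$ sticks. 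Taking $\beta'$ large enough makes $n^{1+\beta'\log(\alpha/(1+\alpha))}$ beat $n^{DK/2+(K-1)/2+\alpha/2}$.

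\textbf{Negative part.} Take $\alpha>\alpha^*$ as in Theorem \ref{thm:number_clusters_post}. The number of clusters under the exact posterior is then at least $(1-\underline\delta)\alpha\log n$ with posterior probability tending to one. Under the truncated model, $K_n \le N+1$ deterministically. Now choose $\beta < (1-\underline\delta)\alpha^*$. Then, for $N < \lceil\beta\log n\rceil$, the event $A_n = \{K_n > N+1\}$ has $\tilde\Phi_N(A_n\mid X_{1:n}) = 0$ and $\Phi(A_n\mid X_{1:n}) \to 1$, so the total variation distance tends to one. The dependence $\beta=\beta(\alpha^*)$ reflects this choice; I would fix $\beta$ using $\alpha^*$ and note that the bound only improves for larger $\alpha$.

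\textbf{Main obstacle.} The main obstacle is the positive part: controlling the truncated posterior mass on the lumped component $N+1$ and the ratio of evidences. This requires verifying that Theorem \ref{thm:evidence_lower_bound} transfers to $\tilde\Pi_N$ with constants uniform in $N$ growing logarithmically. It also requires a usable upper bound on the numerator restricted to configurations that use the last index. I would try to redo the Lemma \ref{lm:preliminary_cn} argument verbatim with $w_{N+1}$ replaced by the tail sum $\sum_{k>N}w_k$, which has the same law as $\tilde w_{N+1}$.
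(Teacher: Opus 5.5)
Your proposal is correct and is essentially the paper's argument. The first part rests on Lemma \ref{lm:preliminary_cn}, its truncated analogue Lemma \ref{lm:preliminary_cn_truncated}, and the exact agreement of the two unnormalised posteriors on small-index allocations (Corollary \ref{crl:upper_bound_truncation}); the second part combines Theorem \ref{thm:number_clusters_post} with $K_n\le N+1$ under truncation, and your test set $\{K_n>N+1\}$ is a cleaner version of the paper's coupling contradiction.

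Two adjustments. First, drop the unproved claim $\sup_P\prod_i Pf(X_i)/P^*f(X_i)\le e^{O(\log n)}$ and use your fallback: by Fubini the $Q^{(n)}$-expectation of the numerator is exactly $\tilde\Phi_N(\exists\, i:\tilde c_i=N+1)\le n(\alpha/(1+\alpha))^N$, so no bound on the likelihood ratio is needed. Second, your stated obstacle disappears because $Z_N\ge Z\,\Phi(\max_i c_i\le N\mid X_{1:n})$, which is at least $Z/2$ with high probability by Lemma \ref{lm:preliminary_cn}; hence Theorem \ref{thm:evidence_lower_bound} need not be re-proved for $\tilde\Pi_N$.
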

Therefore a truncation level $N = \mathcal{O}(\log n)$ is both necessary and sufficient to recover the clustering behaviour of the original posterior. We argue that this phenomenon is strongly connected to the phase transition discussed in Section \ref{sec:transition}: clustering depends on finer properties of the mixing measure, which might not be preserved with a $n^{-1/2}$ approximation error. 

As practical takeaways, Corollary \ref{crl:convergence_wasserstein_trunc} and Theorem \ref{thm:conv_clustering} show that truncation methods can be an efficient way to approximate the infinite-dimensional posterior under our assumptions, since $\mathcal{O}(\log n)$ parameters are sufficient to recover (arguably) all the features of interest. Moreover, an appropriate choice of the truncation level deeply depends on the purpose of the approximation: a fixed value $N$ is enough to achieve the same contraction rates of the original model for the density and the mixing measure, if $N \geq K$, but a matching clustering behaviour requires $N$ to grow with the sample size.

\section{Discussion}\label{sec:discussion}

In this article we provide results on the posterior behaviour of Dirichlet process mixture models when the data are generated by a well-specified finite mixture: in this setting we can show that each component of the true mixing measure is matched by exactly one atom of the stick-breaking representation, up to sub-polynomial terms. Moreover we observe an interesting phase transition in the posterior distribution of the weights: those explicit results allow us to study the asymptotic clustering behaviour, for which little is known in the nonparametric framework.

A first natural generalization is given by priors beyond the Dirichlet process. We believe that similar results could hold for stick-breaking priors \citep{ishwaran2001bayesian}, where the weights are as in \eqref{GEM} with $V_k \overset{\text{ind.}}{\sim} \text{Beta}(a_k, b_k)$; the choice $a_k = 1-\sigma$ and $b_k = \alpha +\sigma k$, with $\sigma \in [0,1)$, leads to the well-known Pitman-Yor process \citep{pitmanyor1997}. We conjecture that Theorem \ref{thm:stick_post} holds and therefore the mass still concentrates on the first $K$ components. Theorems \ref{thm:tail_stick_post} and \ref{thm:number_clusters_post} should instead be modified according to the different asymptotic prior behaviour: for example we expect $K_n = \mathcal{O}(n^\sigma)$ in the Pitman-Yor case, instead of $\mathcal{O}(\log n)$ we prove here. However non-trivial technical challenges could arise especially in the proof of Theorem \ref{thm:stick_post}, which is significantly simplified by specific prior properties of the Dirichlet process; in this perspective a promising avenue could be given by priors based on completely random measures \citep{barrios2013modeling}, which retain much analytical tractability. Connected to this point, it is also well-known \citep{ascolani2023clustering, ohn2023optimal} that the posterior can be very sensitive to the choice of the concentration parameter $\alpha$, on which a suitable hyper-prior is therefore often placed; extending our results to this hierarchical specification could be of interest.

Moreover, assumption $(A1)$ of Section \ref{sec:general} could be relaxed to allow the data to be generated by an \emph{infinite} mixture. We believe that many results of this paper could still hold under suitable assumptions on the tail behaviour of the true weights, which should be coherent with the prior results in Lemma \ref{lemma: Kn_priori}. This could also provide useful tools to study the extension to data generated from (super)smooth distributions, which can be well-approximated by location mixtures with a growing number of components (see e.g.\ \cite[Section 9.4]{Subhashis2017}): we are not aware of any posterior results on clustering or truncated schemes in this context.

Finally, the choice of kernels beyond the location family is an interesting avenue for future research. The main difficulty is to recover the lower bound \eqref{eq:lower_bound_main}, which follows from Theorem $3.10$ in \cite{gassiat2014local} and requires a location family. Extending the latter to other classes of kernels, e.g.\ being strongly identifiable \citep{Nguyen2013, guha2021posterior}, is a significant technical challenge.

\section{Proof of Theorem \ref{thm:evidence_lower_bound}}\label{sec:proof_main}

Before the actual proof, we need to introduce some preliminary notation and results.

\paragraph{A Taylor expansion of the log-likelihood.}
Denote with $v^*_{1:K} = (v_1^*, \dots, v_K^*)$ the vector of positive numbers such that $v_k^* \in [0, 1]$ and
\[
v_1^* = w_1^*, \quad v_k^*\prod_{i = 1}^{k-1}(1-v_i^*) = w_k^*, \quad k = 2, \dots, K.
\]
Notice that by construction $v^*_{K} = 1$. Moreover, let $\vs = (v_1, v_2, \dots)$, $\w = (w_1, w_2, \dots)$ and $\btheta = (\theta_1, \theta_2, \dots)$ such that
\[
v_k \in [0, 1], \quad w_k = v_k\prod_{j = 1}^{k-1}(1-v_j), \quad \theta_k \in \R^D,
\]
for every $k \geq 1$. For a fixed $n$ and $x_{1:n} = (x_1, \dots, x_n)$, with $x_i \in \R^D$, we define 
\[
f_{\vs, \btheta}(x) = \sum_{k \geq 1}w_kf(x -\theta_k), \quad l_{\vs, \btheta}(x) = \log \left(f_{\vs, \btheta}(x) \right) = \log \left(\sum_{k \geq 1}w_kf(x -\theta_k) \right)
\]
and
\begin{equation}\label{eq:def_loglikelihood}
l_{\vs, \btheta}(x_{1:n}) = \sum_{i = 1}^nl_{\vs, \btheta}(x_i) = \sum_{i = 1}^n\log \left(\sum_{k \geq 1}w_kf(x_i -\theta_k) \right).
\end{equation}
We consider a Taylor expansion of $l_{\vs, \btheta}(x_{1:n})$ with respect to $\theta_{1:K}$ and $v_{1:K}$ around $\theta^*_{1:K}$ and $v^*_{1:K}$, respectively: notice that this is well-defined, since for every $x_{1:n}$ the function $l_{\vs, \btheta}(x_{1:n})$ is differentiable for $v_K \in (0, 1+\delta)$ with $\delta > 0$ depending on $x_{1:n}$. Therefore there exist suitable coefficients
\[
A_{n, k}(\vs, \btheta) \in \R^D, \quad B_{n, k}(\vs, \btheta) \in \R
\]
and
\[
\quad C_{n, k, k'}(\vs, \btheta) \in \R^{D \times D}, \quad D_{n, k, k'}(\vs, \btheta) \in \R^{D \times 1}, \quad E_{n, k, k'}(\vs, \btheta)\in \R,
\]
with $k, k' = 1, \dots, K$ such that
\begin{equation}\label{eq:taylor_expansion}
\begin{aligned}
   l_{\vs, \btheta}&(x_{1:n}) = \\
   &l_{\vs^*, \btheta^*}(x_{1:n}) + \sum_{k = 1}^KA_{n,k}(\vs^*, \btheta^*)(\theta_k - \theta_k^*) + \sum_{k = 1}^KB_{n,k}(\vs^*, \btheta^*)(v_k - v_k^*) + R^{(\vs, \btheta)}_n(\hat{\vs}, \hat{\btheta}),
\end{aligned}
\end{equation}
where $(\vs^*, \btheta^*)$ and $(\hat{\vs}, \hat{\btheta})$ are such that
\begin{equation}\label{eq:definition_star}
\vs^*_{1:K} = v^*_{1:K}, \quad \btheta^*_{1:K} = \theta^*_{1:K}, \quad v_k^* = v_k, \quad \theta^*_k = \theta_k,
\end{equation}
for every $k > K$ and
\[
\hat{\vs}_{1:K} = t\vs_{1:K} + (1-t)v^*_{1:K}, \quad \hat{\btheta}_{1:K} = t\btheta_{1:K} + (1-t)\theta^*_{1:K}, \quad \hat{v}_k = v_k, \quad \hat{\theta}_k = \theta_k,
\]
with $t \in (0,1)$ depending on $x_{1:n}$, and
\begin{equation}\label{eq:remainder}
\begin{aligned}
    R^{(\vs, \btheta)}_n(\hat{\vs}, \hat{\btheta}) = \frac{1}{2}\sum_{k, k' = 1}^K(\theta_k - \theta_k^*)^T&C_{n,k, k'}(\hat{\vs}, \hat{\btheta})(\theta_{k'} - \theta_{k'}^*)+\frac{1}{2}\sum_{k, k' = 1}^K(\theta_k - \theta_k^*)^TD_{n,k, k'}(\hat{\vs}, \hat{\btheta})(v_{k'} - v_{k'}^*)\\
& +\frac{1}{2}\sum_{k, k' = 1}^KE_{n, k, k'}(\hat{\vs}, \hat{\btheta})(v_{k} - v_k^*)(v_{k'} - v_{k'}^*).
\end{aligned}
\end{equation}
We give below the explicit expression of the entries of $A, B, C, D, E$, which correspond to the first and second derivatives of $l_{\vs, \btheta}(x_{1:n})$ with respect to $\vs_{1:K}$ and $\btheta_{1:K}$. In the following we also use the notation
\[
\partial_d f(x) = \left. \frac{\partial f(y)}{\partial y_d} \right|_{y = x}, \quad \partial^2_{d d'} f(x) = \left. \frac{\partial^2 f(y)}{\partial y_d\partial y_{d'}} \right|_{y = x}
\]
for $d, d' = 1, \dots, D$.

As regards the first derivatives, for $k = 1, \dots, K$ and $d = 1, \dots, D$ we have that
\begin{equation}\label{eq:definition_A}
A_{n, k, d}(\vs, \btheta) = \frac{\partial l_{\vs, \btheta}(x_{1:n})}{\partial \theta_{kd}} = \sum_{i = 1}^n\frac{\partial l_{\vs, \btheta}(x_{i})}{\partial \theta_{kd}} = -w_k\sum_{i = 1}^n\frac{\partial_df(x_i - \theta_k)}{f_{\vs, \btheta}(x_i)}
\end{equation}
and
\begin{equation}\label{eq:definition_B}
\begin{aligned}
B_{n, k}(\vs, \btheta) &= \frac{\partial l_{\vs, \btheta}(x_{1:n})}{\partial v_{k}} = \sum_{i = 1}^n\frac{\partial l_{\vs, \btheta}(x_{i})}{\partial v_k} \\
&= \left( 1- \sum_{j = 1}^{k-1}w_j\right)\sum_{i = 1}^n\frac{f(x_i - \theta_k) - \sum_{j > k}\bar{w}_jf(x_i -\theta_j)}{f_{\vs, \btheta}(x_i)},
\end{aligned}
\end{equation}
with $\bar{w}_j = v_j\prod_{i = k+1}^{j-1}(1-v_i)$ for $j > k$.

As regards the second derivatives, for $k, k' = 1, \dots, K$ and $d, d' = 1, \dots, D$ we have that
\begin{equation}\label{eq:definition_C}
C_{n, k, k', d, d'}(\vs, \btheta) = \frac{\partial^2 l_{\vs, \btheta}(x_{1:n})}{\partial \theta_{kd} \partial \theta_{k'd'}} = 
\begin{cases}
    w_k\sum_{i = 1}^n\frac{\partial_{dd'}f(x_i - \theta_k)}{f_{\vs, \btheta}(x_i)} -\sum_{i = 1}^n\left[\frac{\partial l_{\vs, \btheta}(x_{i})}{\partial \theta_{kd}} \right] \left[\frac{\partial l_{\vs, \btheta}(x_{i})}{\partial \theta_{kd'}} \right] \quad \text{if } k = k'\\
    \\
    -\sum_{i = 1}^n\left[\frac{\partial l_{\vs, \btheta}(x_{i})}{\partial \theta_{kd}} \right] \left[\frac{\partial l_{\vs, \btheta}(x_{i})}{\partial \theta_{k'd'}} \right] \quad \text{if } k \neq k'
\end{cases}
\end{equation}
and
\begin{equation}\label{eq:definition_D}
D_{n, k, k', d}(\vs, \btheta) = \frac{\partial^2 l_{\vs, \btheta}(x_{1:n})}{\partial \theta_{kd}\partial v_{k'}} = 
\begin{cases}
\frac{1}{v_k}\frac{\partial l_{\vs, \btheta}(x_{1:n})}{\partial \theta_{kd}}-\sum_{i = 1}^n\left[\frac{\partial l_{\vs, \btheta}(x_{i})}{\partial \theta_{kd}} \right] \left[\frac{\partial l_{\vs, \btheta}(x_{i})}{\partial v_k} \right] \quad \text{if } k = k'\\
\\
-\sum_{i = 1}^n\left[\frac{\partial l_{\vs, \btheta}(x_{i})}{\partial \theta_{kd}} \right] \left[\frac{\partial l_{\vs, \btheta}(x_{i})}{\partial v_{k'}} \right] \quad \text{if } k < k'\\
\\
-\frac{1}{1-v_{k'}}\frac{\partial l_{\vs, \btheta}(x_{1:n})}{\partial \theta_{kd}}-\sum_{i = 1}^n\left[\frac{\partial l_{\vs, \btheta}(x_{i})}{\partial \theta_{kd}} \right] \left[\frac{\partial l_{\vs, \btheta}(x_{i})}{\partial v_{k'}} \right] \quad \text{if } k > k'
\end{cases}
\end{equation}
and
\begin{equation}\label{eq:definition_E}
E_{n, k, k'}(\vs, \btheta) = \frac{\partial^2 l_{\vs, \btheta}(x_{1:n})}{\partial v_k\partial v_{k'}} = 
\begin{cases}
-\sum_{i = 1}^n\left[ \frac{\partial l_{\vs, \btheta}(x_{i})}{\partial v_k}\right]^2 \quad \text{if } k = k'\\
\\
-\frac{1}{1-v_{k}}\frac{\partial l_{\vs, \btheta}(x_{1:n})}{\partial v_k}-\sum_{i = 1}^n\left[\frac{\partial l_{\vs, \btheta}(x_{i})}{\partial v_k} \right] \left[\frac{\partial l_{\vs, \btheta}(x_{i})}{\partial v_{k'}} \right] \quad \text{if } k < k'\\
\\
-\frac{1}{1-v_{k'}}\frac{\partial l_{\vs, \btheta}(x_{1:n})}{\partial v_{k'}}-\sum_{i = 1}^n\left[\frac{\partial l_{\vs, \btheta}(x_{i})}{\partial v_k} \right] \left[\frac{\partial l_{\vs, \btheta}(x_{i})}{\partial v_{k'}} \right] \quad \text{if } k > k'
\end{cases}
\end{equation}

\paragraph{Results on Glivenko-Cantelli and Donsker classes.}

Let $\sH$ be a subset of the space of measurable functions $h \, : \, \R^D \, \to \, \R$ such that
\[
Qh^2 = \int h^2(x) Q(\d x) < \infty, \quad \text{for every $h \in \sH$}
\]
and
\[
\sup_{h \in \sH} \, |h(x) - Qh| < \infty, \quad \text{for every $x \in \R^D$}.
\]
Given $X_i \simiid Q$, with $i = 1, \dots, n$ we denote
\[
P_nh = \frac{1}{n}\sum_{i = 1}^nh(X_i), \quad G_nh = \sqrt{n}\left(P_nh - Qh \right).
\]
Following usual practice, we say that $\sH$ is \emph{Q-Glivenko-Cantelli} if
\[
\sup_{h \in \sH} \, |P_nh - Qh| \to 0,
\]
almost surely as $n \to \infty$. Similarly, we say that $\sH$ is \emph{Q-Donsker} if the sequence of processes $\left\{G_nh \, \mid \, h \in \mathcal{H} \right\}$ converges weakly to a tight limit process in $\ell^{\infty}(\sH)$, the space of bounded functions from $\sH$ to $\R$. For more details, see Chapter 19 in \cite{van2000asymptotic} and \cite{van2023weak}.

\begin{lemma}\label{lemma:glivenko-cantelli}
Let $\Theta \subset \R^D$ be compact and
\[
\begin{aligned}
\sH_1 = \biggl\{\left[\frac{f(x-\theta_1)}{f(x-\theta_2)}\right]^p \, &\mid \, \theta_1, \theta_2 \in \Theta, p = 1,2 \biggr\}, \quad \sH_2 = \left\{\left[\frac{\partial_df(x-\theta_1)}{f(x-\theta_2)}\right]^p \, \mid \, \theta_1, \theta_2 \in \Theta, p = 1,2 \right\}
\end{aligned}
\]
and
\[
\sH_3 = \left\{\frac{\partial_{dd'}f(x-\theta_1)}{f(x-\theta_2)} \, \mid \, \theta_1, \theta_2 \in \Theta \right\},
\]
with $d, d' = 1, \dots, D$. Under assumptions $(B1) - (B3)$, we have that $\sH_1$, $\sH_2$ and $\sH_3$ are Q-Glivenko-Cantelli.
\end{lemma}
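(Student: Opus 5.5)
The plan is to apply a uniform strong law for classes with finite bracketing numbers, i.e.\ Theorem 2.4.1 in \cite{van2023weak}. Under that theorem it suffices to show two things for each $\sH_j$: that $N_{[\,]}(\epsilon, \sH_j, L^1(Q)) < \infty$ for every $\epsilon>0$, and that the class admits an integrable envelope. We exploit the parametric structure of each class: every element is indexed by $(\theta_1,\theta_2) \in \Theta^2$ (plus the finite indices $p,d,d'$), and $\Theta^2$ is compact. Brackets can therefore be built from a Lipschitz-in-parameter bound, as in Example 19.7 of \cite{van2000asymptotic}. Concretely, for $h_{\theta_1,\theta_2}$ in any of the classes we aim for
\[
|h_{\theta_1,\theta_2}(x) - h_{\theta_1',\theta_2'}(x)| \leq m(x)\bigl(\lVert\theta_1-\theta_1'\rVert + \lVert\theta_2-\theta_2'\rVert\bigr), \qquad Q m < \infty .
\]
Given this bound, covering $\Theta^2$ by $O(\epsilon^{-2D})$ balls yields finitely many brackets of $L^1(Q)$-size $\epsilon$. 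Adding an integrable envelope then gives the Glivenko--Cantelli property. Taking the finitely many values of $p,d,d'$ together preserves it.

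The key analytic input is control of ratios $f(x-\theta_1)/f(x-\theta_2)$ uniformly over the compact $\Theta$. By (B2), $\log f$ has gradient bounded by $R(1+\lVert x\rVert)^\gamma$. Applying the mean value theorem, with $\mathrm{diam}(\Theta)=:L$ and $\lVert\theta\rVert\leq L'$ on $\Theta$, gives
\[
\frac{f(x-\theta_1)}{f(x-\theta_2)} \leq \exp\bigl(R L (1+\lVert x\rVert + L')^\gamma\bigr) \leq \exp\bigl(M(1+\lVert x\rVert^\gamma)\bigr)
\]
for a suitable $M$. The derivative in $\theta$ of the ratio is the ratio times a difference of score terms, each bounded by a polynomial in $\lVert x\rVert$. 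For $\sH_2$ and $\sH_3$ we write $\partial_d f/f(\cdot-\theta_2) = [\partial_d f(x-\theta_1)/f(x-\theta_1)]\cdot[f(x-\theta_1)/f(x-\theta_2)]$. We then use $\partial_d f/f = \partial_d\log f$ and $\partial^2_{dd'}f/f = \partial^2_{dd'}\log f + \partial_d\log f\,\partial_{d'}\log f$. Both are polynomially bounded by (B2). The $\theta$-derivatives of these quantities involve third derivatives of $f$. We control those by writing $\partial^3 f / f$ as the ratio $\nabla^3 f(x-\theta_1)/f(x-\theta_1)$. We bound it using (B3) for the numerator and the ratio control to transfer the denominator. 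If that is too crude, we fall back on continuity of $\theta\mapsto h_\theta(x)$ together with the envelope. This is the generic compact-parameter GC argument (Example 19.8 in \cite{van2000asymptotic}), which requires only pointwise continuity in $(\theta_1,\theta_2)$, guaranteed by (B1) since $f\in C^3$ and $f>0$, plus an integrable envelope. That fallback avoids Lipschitz constants altogether, and I would actually use it as the main route.

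Everything therefore reduces to the envelope. Squaring (the case $p=2$) and multiplying by polynomial factors, every element of every class is bounded by
\[
H(x) = c\,(1+\lVert x\rVert)^{4\gamma}\exp\bigl(2M(1+\lVert x\rVert^\gamma)\bigr).
\]
It remains to show $QH = \int H(x) \sum_k w_k^* f(x-\theta_k^*)\,\d x<\infty$. Shifting variables $x\mapsto x+\theta_k^*$ and using $\lVert x+\theta_k^*\rVert^\gamma \leq C(\lVert x\rVert^\gamma + 1)$ reduces this to $\int e^{M'\lVert x\rVert^\gamma} f(x)\,\d x<\infty$, with the polynomial factor absorbed into a slightly larger exponent. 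This integral is finite by the last part of (B2), which holds for every $M$. When $\gamma=0$ the envelope is bounded and the claim is immediate.

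The main obstacle is the envelope step: the exponent produced by the ratio bound must be matched exactly to the integrability assumption in (B2), including the extra power of the exponent coming from $p=2$ and the shift by $\theta_k^*$. This is precisely why (B2) is assumed for all $M>0$. Once the envelope is secured, measurability is handled by continuity in $\theta$ over a countable dense subset of $\Theta^2$, and the GC conclusion follows.
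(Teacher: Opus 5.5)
Your final route is correct and is essentially the paper's proof: pointwise continuity in $(\theta_1,\theta_2)$ on the compact $\Theta^2$ plus an integrable envelope (Example 19.8 of \cite{van2000asymptotic}), with the envelope obtained from the mean value theorem for $\log f$, the identities $\partial_d f/f=\partial_d\log f$ and $\partial^2_{dd'}f/f=\partial^2_{dd'}\log f+\partial_d\log f\,\partial_{d'}\log f$, and the exponential moment condition in (B2). You were right to demote the Lipschitz/bracketing route, because the $\theta_1$-derivative of elements of $\sH_3$ brings in $\nabla^3 f/f$, which (B2) does not control and (B3) bounds only without the division by $f$.
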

\begin{proof}
The general principle, which follows e.g.\ by Example $19.8$ in \cite{van2000asymptotic}, is that $\sH = \left\{h_\theta(x) \, \mid \, \theta \in \Theta \right\}$ is Q-Glivenko-Cantelli if $\theta \, \to \, h_\theta(x)$ is continuous for every $x$ and there exists $H(x)$ such that
\begin{equation}\label{eq:condition_GC}
|h_\theta(x)| \leq H(x), \quad \int H(x) P^*f(x) \, \d x < \infty.
\end{equation}
We start by $\sH_1$ and notice that by $(B1)$ and the Mean Value Theorem we have that
\[
\begin{aligned}
p \left\lvert \log f(x-\theta_1) -  \log f(x-\theta_2)\right]\rvert &\leq p\left\lVert \theta_1 - \theta_2 \right\rVert \sup_{\theta \in \Theta}\, \left\lVert \nabla \log f(x-\theta) \right\rVert \leq pB s(x),
\end{aligned}
\]
where $B = \text{diam}(\Theta)$, which is finite by compactness of $\Theta$, and 
\begin{equation}\label{eq:definition_s}
s(x) = \sup_{\theta \in \Theta}\, \left\lVert \nabla \log f(x-\theta) \right\rVert.
\end{equation}
Thus, in order to prove \eqref{eq:condition_GC} it suffices to show that
\[
\int e^{pB s(x)}P^*f(x) \,d x = \sum_{k = 1}^Kw_k^*\int e^{pB s(x+\theta_k^*)}f(x) \,d x < \infty,
\]
which is implied by
\begin{equation}\label{eq:exp_moment1}
\int e^{pB s(x+\eta)}f(x) \,d x < \infty,
\end{equation}
for every fixed $\eta \in \R^D$. By assumption $(B2)$ and simple calculations we get that
\[
\begin{aligned}
s(x + \eta) &= \sup_{\theta \in \Theta}\, \left\lVert \nabla \log f(x+\eta-\theta) \right\rVert \leq R\sup_{\theta \in \Theta}\, \left( 1+ \left\lVert x + \eta - \theta \right\rVert \right)^\gamma\\
& \leq R\sup_{\theta \in \Theta}\, \left( 1+ \left\lVert x + \eta - \theta \right\rVert \right)^\gamma \leq R\left( 1+ \left\lVert x  \right\rVert +\rho\right)^\gamma,
\end{aligned}
\]
with $\rho = \sup_{\theta \in \Theta}\,\left\lVert \eta - \theta \right\rVert < \infty$. Let $c_\gamma$ such that $(a+b)^\gamma \leq c_\gamma(a^\gamma+b^\gamma)$, so that
\[
s(x + \eta) \leq Rc_\gamma \left[(1+\rho)^\gamma + \left\lVert x  \right\rVert^\gamma\right],
\]
which implies
\[
\int e^{pB s(x+\eta)}f(x) \,d x \leq e^{pBRc_\gamma(1+\rho)}\int e^{pBRc_\gamma\left\lVert x  \right\rVert^\gamma}f(x) \,d x
\]
and \eqref{eq:exp_moment1} follows again by $(B2)$.

As regards $\sH_2$ notice that
\[
\begin{aligned}
\left \lvert\frac{\partial_df(x-\theta_1)}{f(x-\theta_2)}\right\rvert & = \left\lvert\frac{\partial_df(x-\theta_1)}{f(x-\theta_1)}\right\rvert\frac{f(x-\theta_1)}{f(x-\theta_2)}\\
& = \left\lvert\partial_d \log f(x-\theta_1)\right\rvert\frac{f(x-\theta_1)}{f(x-\theta_2)} \leq s(x)e^{Bs(x)},
\end{aligned}
\]
with $s(x)$ as in \eqref{eq:definition_s}. By the elementary inequality $t e^{ct} \leq e^{(c+2)t}$, for every $t,c \geq 0$, we have that
\[
\left \lvert\frac{\partial_df(x-\theta_1)}{f(x-\theta_2)}\right\rvert^p \leq e^{p(B+2)s(x)}
\]
and \eqref{eq:condition_GC} follows as in the previous point.

As regards $\sH_3$ notice that
\[
\frac{\partial_{dd'}f(x)}{f(x)} = \partial_{dd'}\log f(x) + \left[\frac{\partial_{d}\log f(x)}{f(x)} \right]\left[\frac{\partial_{d'}\log f(x)}{f(x)} \right]
\]
and therefore
\[
\begin{aligned}
\left \lvert\frac{\partial_{dd'}f(x-\theta_1)}{f(x-\theta_2)}\right\rvert & = \left\lvert\frac{\partial_{dd'}f(x-\theta_1)}{f(x-\theta_1)}\right\rvert\frac{f(x-\theta_1)}{f(x-\theta_2)}\\
& = \left\lvert\partial_{dd'} \log f(x-\theta_1)\right\rvert\frac{f(x-\theta_1)}{f(x-\theta_2)} + \left\lvert\partial_{d} \log f(x-\theta_1)\right\rvert\left\lvert\partial_{d'} \log f(x-\theta_1)\right\rvert\frac{f(x-\theta_1)}{f(x-\theta_2)}\\
&\leq s_2(x)e^{Bs(x)} + s^2(x)e^{Bs(x)},
\end{aligned}
\]
with $s_2(x) = \sup_{\theta \in \Theta}\, \left\lVert \nabla^2 \log f(x-\theta) \right\rVert$. Again by $(B2)$, we get that
\[
\begin{aligned}
s_2(x + \eta) &= \sup_{\theta \in \Theta}\, \left\lVert \nabla^2 \log f(x+\eta-\theta) \right\rVert \leq  R\left( 1+ \left\lVert x  \right\rVert +\rho\right)^\gamma,
\end{aligned}
\]
which implies that, proceeding as before, there exists $C'> 0$ such that
\[
\int \left[s_2(x)e^{Bs(x)} + s^2(x)e^{Bs(x)} \right]P^*f(x) \, \d x \leq e^{C'}\int e^{C'\left\lVert x  \right\rVert^\gamma}f(x) \,d x
\]
and \eqref{eq:condition_GC} follows as in the previous points.
\end{proof}

\begin{lemma}\label{lemma:donsker}
Let $\Theta \subset \R^D$ be compact and
\[
\begin{aligned}
\sH = \biggl\{\frac{f(x-\theta)}{P^*f(x)} \, &\mid \, \theta \in \Theta \biggr\}.
\end{aligned}
\]
Under assumptions $(B1) - (B3)$ we have that $\sH$ is Q-Donsker.
\end{lemma}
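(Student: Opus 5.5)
The plan is to verify the standard sufficient condition for a parametric class to be Donsker: the Lipschitz-in-parameter criterion (Example 19.7 in \cite{van2000asymptotic}). Write $h_\theta(x) = f(x-\theta)/P^*f(x)$ for $\theta \in \Theta$. I will find a measurable $L(x)$ with $\int L^2 \, \d Q < \infty$ such that
\[
|h_{\theta_1}(x) - h_{\theta_2}(x)| \leq L(x)\left\lVert \theta_1 - \theta_2 \right\rVert, \quad \theta_1, \theta_2 \in \Theta.
\]
I will also check that some $h_{\theta_0}$ is square integrable. Since $\Theta \subset \R^D$ is compact (bounded), the bracketing numbers satisfy $N_{[\,]}(\epsilon L_2(Q)\text{-size}) \lesssim \epsilon^{-D}$. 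The bracketing entropy integral is then finite, and $\sH$ is Q-Donsker. The finiteness of $\sup_h|h(x)-Qh|$ required in the setting also follows, since $|h_\theta(x)| \leq |h_{\theta_0}(x)| + B L(x)$ with $B = \text{diam}(\Theta)$.

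First, the envelope. Since $P^*f(x) \geq w_1^* f(x-\theta_1^*)$ and $w_1^*>0$, we have $h_\theta(x) \leq (w_1^*)^{-1} f(x-\theta)/f(x-\theta_1^*)$. Because $\theta_1^* \in \Theta$, this is $(w_1^*)^{-1}$ times an element of $\sH_1$ with $p=1$. Its square corresponds to $p=2$, which Lemma \ref{lemma:glivenko-cantelli} (more precisely its proof) shows is dominated by $e^{2Bs(x)}$, a $Q$-integrable function. So $\sup_\theta h_\theta^2 \in L^1(Q)$.

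Second, the Lipschitz bound. By the mean value theorem,
\[
|h_{\theta_1}(x)-h_{\theta_2}(x)| \leq \left\lVert \theta_1-\theta_2\right\rVert \sup_{\theta\in\Theta}\frac{\left\lVert\nabla f(x-\theta)\right\rVert}{P^*f(x)}.
\]
Here I use that $\Theta$ can be taken convex, e.g.\ by replacing it with its convex hull, which is still compact; otherwise I would cover $\Theta$ by finitely many balls. As before,
\[
\frac{\left\lVert\nabla f(x-\theta)\right\rVert}{P^*f(x)} \leq (w_1^*)^{-1}\left\lVert\nabla\log f(x-\theta)\right\rVert\,\frac{f(x-\theta)}{f(x-\theta_1^*)} \leq (w_1^*)^{-1}s(x)e^{Bs(x)},
\]
with $s$ as in \eqref{eq:definition_s}. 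So $L(x) = (w_1^*)^{-1}s(x)e^{Bs(x)} \leq (w_1^*)^{-1}e^{(B+2)s(x)}$. Then $L^2 \leq (w_1^*)^{-2}e^{2(B+2)s(x)}$, which is $Q$-integrable by exactly the exponential-moment argument in the proof of Lemma \ref{lemma:glivenko-cantelli}, using (B2).

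The only delicate point is the uniform (in $\theta$) square-integrable control of the ratio $f(x-\theta)/P^*f(x)$ in the tails. This is handled by bounding the log-ratio via $\left\lVert\nabla \log f\right\rVert \leq R(1+\left\lVert x\right\rVert)^\gamma$ together with the moment condition $\int e^{M\left\lVert x\right\rVert^\gamma}f < \infty$ for all $M$. Assumption (B3) is not needed beyond what (B1)--(B2) provide.
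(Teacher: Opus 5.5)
Your proof is correct. It uses the same tool as the paper, the Lipschitz-in-parameter criterion (Example 19.7 in van der Vaart) with essentially the same $m(x)=\sup_{\theta\in\Theta}\lVert\nabla f(x-\theta)\rVert/P^*f(x)$. It differs in which integrability condition is verified and which assumption supplies it.

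The paper only checks $\int m\,\d Q=\int\sup_{\theta}\lVert\nabla f(x-\theta)\rVert\,\d x<\infty$, using the polynomial decay in (B3). That is the $L_1(Q)$ version of the criterion, which controls bracketing numbers in $L_1(Q)$ and so only yields the Glivenko--Cantelli property. The Donsker conclusion needs $\int m^2\,\d Q=\int \sup_\theta\lVert\nabla f(x-\theta)\rVert^2/P^*f(x)\,\d x<\infty$. There $P^*f$ remains in the denominator, and (B3) alone gives no control over it.

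You instead bound the ratio against a single component, $m(x)\le (w_1^*)^{-1}s(x)e^{Bs(x)}$. You then get $m\in L_2(Q)$ from the exponential-moment part of (B2), exactly as in the proof of Lemma \ref{lemma:glivenko-cantelli}. You also obtain the square-integrable envelope needed for the brackets to have finite $L_2(Q)$ norm.

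So your route actually delivers the Donsker property, and it shows that (B3) is unnecessary for this lemma. The paper's argument is shorter but, as written, verifies only the weaker first-moment condition.

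Your remark about replacing $\Theta$ by its convex hull is also sound: the hull is still compact with the same diameter, and Donsker classes are closed under taking subclasses. It makes explicit a convexity requirement for the mean value theorem that the paper's proofs of Lemmas \ref{lemma:glivenko-cantelli} and \ref{lemma:donsker} use only implicitly.
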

\begin{proof}
The general principle, which follows e.g.\ by Example $19.7$ in \cite{van2000asymptotic}, is that $\sH = \left\{h_\theta(x) \, \mid \, \theta \in \Theta \right\}$ is Q-Donsker if there exists $m(x)$ such that
\begin{equation}\label{eq:condition_D}
\left\lvert h_{\theta_1}(x) - h_{\theta_2}(x) \right\rvert \leq \left\lVert \theta_1 - \theta_2 \right\rVert m(x), \quad \int m(x) P^*f(x) \, \d x < \infty,
\end{equation}
for every $x \in \R^D$ and $\theta_1, \theta_2 \in \Theta$. By $(B1)$ and the Mean Value Theorem we have that
\[
\begin{aligned}
\left\lvert \frac{f(x-\theta_1)}{P^*f(x)} -  \frac{f(x-\theta_2)}{P^*f(x)}\right]\rvert &\leq \left\lVert \theta_1 - \theta_2 \right\rVert m(x),
\end{aligned}
\]
with 
\[
m(x) = \frac{\sup_{\theta \in \Theta}\, \left\lVert \nabla f(x-\theta) \right\rVert}{P^*f(x)}.
\]
By assumption $(B3)$ and simple calculations, there exists $C' > 0$ such that
\[
\begin{aligned}
\int m(x) P^*f(x) \, \d x &= \int \sup_{\theta \in \Theta}\, \left\lVert \nabla f(x-\theta) \right\rVert \, \d x \leq \int \frac{C'}{\left(1+\left\lVert x \right \rVert \right)^{D + r}} \, \d x < \infty,
\end{aligned}
\]
and therefore \eqref{eq:condition_D} follows.
\end{proof}

\paragraph{Preliminary results}
Combining the materials developed in the previous paragraphs, we can prove the following corollaries.
\begin{corollary}\label{cor:upper_bound_taylor1}
For every $\delta > 0$ there exists $C = C(\delta, P^*)$ such that
\[
Q^{(n)}\left( \sup_{\vs^*, \btheta^*} \,  \frac{1}{\sqrt{n}}\left\lvert A_{n,k, d}(\vs^*, \btheta^*) \right \rvert \leq C\right) \geq 1 -\delta, \quad Q^{(n)}\left( \sup_{\vs^*, \btheta^*} \,  \frac{1}{\sqrt{n}}\left\lvert B_{n,k}(\vs^*, \btheta^*) \right \rvert \leq C\right) \geq 1 -\delta,
\]
for every $k = 1, \dots, K$ and $d = 1, \dots, D$ and $n \geq 1$, with $(\vs^*, \btheta^*)$ as in \eqref{eq:definition_star}.
\end{corollary}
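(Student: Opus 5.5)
The key observation is that at $(\vs^*, \btheta^*)$ the coordinates $v_{1:K}$ are fixed at $v^*_{1:K}$ and $v_K^* = 1$. Hence $w_k = w_k^*$ for $k \le K$ and $w_k = 0$ for every $k > K$, whatever the tail values $(v_k, \theta_k)_{k>K}$ are. In particular $f_{\vs^*, \btheta^*}(x) = P^*f(x)$ identically. So the supremum in the statement only acts through the terms in \eqref{eq:definition_B} with $\bar{w}_j$, $j > K$. I will treat the cases separately and reduce each to either a fixed centred i.i.d.\ sum, handled by Chebyshev, or a supremum of an empirical process over $\Theta$, handled by Lemma \ref{lemma:donsker}. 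Throughout, the tail atoms $\theta_j$ are taken in $\Theta$, the support of $P_0$ under (A3).

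\emph{Coefficients $A$.} By \eqref{eq:definition_A}, $A_{n,k,d}(\vs^*, \btheta^*) = -w_k^* \sum_{i} \partial_d f(X_i - \theta_k^*)/P^*f(X_i)$. This does not depend on the tail, so the supremum is trivial. Its summands are i.i.d.\ with $Q$-mean $\int \partial_d f(x-\theta_k^*)\,\d x = 0$; the integral vanishes by (B3), since $\partial_d f$ is integrable and $f$ vanishes at infinity. For the variance, $P^*f \ge w_k^* f(\cdot - \theta_k^*)$ gives
\[
w_k^{*2}\int [\partial_d f(x-\theta_k^*)/P^*f(x)]^2 Q(\d x) \le \int [\partial_d f(x-\theta_k^*)/f(x-\theta_k^*)]^2 Q(\d x),
\]
which is finite by the $p=2$ case of the bound for $\sH_2$ in Lemma \ref{lemma:glivenko-cantelli}. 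Chebyshev's inequality then gives, for all $n$, $Q^{(n)}(|A_{n,k,d}|/\sqrt{n} > C) \le \sigma^2/C^2 \le \delta$ for $C$ large.

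\emph{Coefficients $B$ with $k < K$.} For $k < K$ and $j > K$ the factor $\bar{w}_j$ contains $(1-v_K^*) = 0$, so again there is no dependence on the tail. Writing $h_\theta(x) = f(x-\theta)/P^*f(x)$, the $i$-th summand is
\[
h_{\theta_k^*}(X_i) - \sum_{j=k+1}^{K} \bar{w}_j^* h_{\theta_j^*}(X_i).
\]
Here $Qh_\theta = 1$ and $\sum_{j=k+1}^K \bar{w}_j^* = 1$ because $v_K^* = 1$, so the summands have mean zero. They also have finite variance: $h_{\theta_j^*} \le 1/w_j^*$ is bounded. Chebyshev's inequality concludes as before.

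\emph{Coefficient $B_K$: the only genuine supremum.} For $k = K$ we have $\sum_{j>K} \bar{w}_j = 1$, so $\sum_{j>K} \bar{w}_j f(x-\theta_j) = \int f(x-\theta)\, P'(\d\theta)$ for an arbitrary probability measure $P'$ on $\Theta$. Hence
\[
n^{-1/2} B_{n,K} = w_K^* \Bigl( G_n h_{\theta_K^*} - \int G_n h_\theta \, P'(\d\theta) \Bigr),
\]
using $Qh_\theta = 1$ and linearity in $P'$. This yields
\[
\sup_{\vs^*, \btheta^*} n^{-1/2}|B_{n,K}| \le 2 \sup_{\theta\in\Theta} |G_n h_\theta|.
\]
By Lemma \ref{lemma:donsker}, $\{G_n h_\theta\}$ converges weakly in $\ell^\infty(\sH)$ to a tight limit. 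By the continuous mapping theorem, $\sup_\theta |G_n h_\theta|$ is asymptotically tight, so there are $C_0$ and $n_0$ with probability at least $1-\delta$ of staying below $C_0$ for $n \ge n_0$. For each of the finitely many $n < n_0$ the variable is almost surely finite, since $\theta \mapsto h_\theta(x)$ is continuous on compact $\Theta$, so it is bounded in probability. Taking the maximum of these constants gives a $C$ valid for every $n$.

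I expect this last step to be the main obstacle. It requires passing from weak convergence in $\ell^\infty$ to a uniform-in-$n$ tightness bound, with some care about measurability of the supremum. The latter I would settle via separability: the supremum over $\Theta$ equals the supremum over a countable dense subset, by continuity in $\theta$.
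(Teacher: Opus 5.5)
Your proposal is correct and follows essentially the paper's route. $A_{n,k,d}$ is a fixed centred i.i.d.\ sum, and the tail-dependent part of $B_{n,k}$ is reduced to $2\sup_{\theta\in\Theta}|G_n h_\theta|$ and controlled via Lemma \ref{lemma:donsker}. Your use of Chebyshev instead of the CLT, the split $k<K$ versus $k=K$, and the explicit patch for finitely many small $n$ make the ``for every $n\geq 1$'' claim cleaner than in the paper.

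One point should be stated explicitly; the paper uses it tacitly too. The identity $\sum_{j>K}\bar w_j=1$ holds only when $\prod_{i>K}(1-v_i)=0$, i.e.\ $G_0$-almost surely. That is all the proof of Theorem \ref{thm:evidence_lower_bound} needs, but for a tail with, e.g., all $v_j=0$ the supremum would be of order $\sqrt{n}$.
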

\begin{proof}
First of all notice that
\[
A_{n, k, d}(\vs^*, \btheta^*) = -w^*_k\sum_{i = 1}^n\frac{\partial_df(X_i - \theta^*_k)}{P^*f(X_i)},
\]
for every $(\vs^*, \btheta^*)$. Moreover we have that
\[
\E \left[\frac{\partial_df(X - \theta^*_k)}{P^*f(X)}  \right] = 0
\]
and
\[
\begin{aligned}
\E \left[\left(\frac{\partial_df(X - \theta^*_k)}{P^*f(X)} \right)^2 \right] &\leq \frac{1}{(w_k^*)^2}\int \left[ \partial_d \log f(x-\theta_k^*)\right]^2P^*f(x)\, \d x\\
&\leq \frac{R^2}{(w_k^*)^2}\sum_{k' = 1}^Kw_{k'}^*\int \left( 1+\left\lVert x-\theta_k^* \right\rVert\right)^{2\gamma}f(x-\theta_{k'}^*)\, \d x <\infty,
\end{aligned}
\]
by applying $(B2)$ twice. The result for $A_{n, k, d}(\vs^*, \btheta^*)$ then follows by the Central Limit Theorem.

As regards the second point, notice that
\[
\begin{aligned}
\left \lvert\sum_{i = 1}^n\frac{f(x_i - \theta^*_k) - \sum_{j > k}\bar{w}_jf(x_i -\theta_j)}{P^*f(x_i)}\right \rvert &\leq \left \lvert\sum_{i = 1}^n\left(\frac{f(x_i - \theta^*_k)}{P^*f(x_i)}-1\right)\right \rvert+ \left \lvert\sum_{i = 1}^n\left( \frac{\sum_{j > k}\bar{w}_jf(x_i -\theta_j)}{P^*f(x_i)}-1\right)\right \rvert\\
& \leq 2\sup_{\theta \in \Theta} \,\left \lvert\sum_{i = 1}^n\left(\frac{f(x_i - \theta)}{P^*f(x_i)}-1\right)\right \rvert
\end{aligned},
\]
which implies
\[
\sup_{\vs^*, \btheta^*} \,  \frac{1}{\sqrt{n}}\left\lvert B_{n,k}(\vs^*, \btheta^*) \right \rvert \leq 2\sup_{\theta \in \Theta} \,\left \lvert\sum_{i = 1}^n\left(\frac{f(X_i - \theta)}{P^*f(X_i)}-1\right)\right \rvert
\]
Since
\[
\E \left[\frac{f(X - \theta)}{P^*f(X)}-1 \right] = 0,
\]
by Lemma \ref{lemma:donsker} we have that
\[
\sup_{\theta \in \Theta} \left \lvert \frac{1}{\sqrt{n}}\sum_{i = 1}^n\left(\frac{f(X_i - \theta)}{P^*f(X_i)}-1\right) \right \rvert \to \sup_{\theta \in \Theta}\left\lvert \G \left(\frac{f(\cdot - \theta)}{P^*f(\cdot)}\right)  \right\rvert
\]
weakly as $n \to \infty$, where $\G$ is a tight Gaussian process in $\ell^{\infty}(\sH)$. Therefore the right hand side is finite and the result follows.
\end{proof}
Let now $\epsilon' > 0$ and define
\begin{equation}\label{eq:definition_S}
S_{\epsilon'} = \left\{(\vs, \btheta) \, \mid \, |v_k - v_k^*|\leq \epsilon', \left\lVert\theta_{k} - \theta_{k}^*\right\rVert\leq \epsilon', v_{k'} \in [0,1] \text{ for every } k = 1, \dots, K \text{ and } k' \geq 1 \right\}.
\end{equation}
\begin{corollary}\label{cor:upper_bound_taylor2}
Let
\[
\epsilon' < \min_{k = 1, \dots, K} \,v_k^* \quad \text{ and } \quad \epsilon' < 1-\max_{k = 1, \dots, K-1} \,v_k^*.
\]
Then, for every $\delta > 0$ there exist $C = C(\delta, \theta^*)$ such that
\[
Q^{(n)}\left( \sup_{(\vs, \btheta) \in S_{\epsilon'}} \,  \frac{1}{n}\left\lvert C_{n,k,k',d, d'}(\vs, \btheta) \right \rvert \leq C\right) \geq 1 -\delta, \quad Q^{(n)}\left( \sup_{(\vs, \btheta) \in S_{\epsilon'}} \,  \frac{1}{n}\left\lvert D_{n,k, k', d}(\vs, \btheta) \right \rvert \leq C\right) \geq 1 -\delta,
\]
and
\[
Q^{(n)}\left( \sup_{(\vs, \btheta) \in S_{\epsilon'}} \,  \frac{1}{n}\left\lvert E_{n,k, k'}(\vs, \btheta) \right \rvert \leq C\right) \geq 1 -\delta
\]
for every $k = 1, \dots, K$ and $d = 1, \dots, D$ and $n \geq 1$.
\end{corollary}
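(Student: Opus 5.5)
The plan is to reduce each of the three second-derivative coefficients to finitely many uniform empirical averages over the Glivenko--Cantelli classes of Lemma \ref{lemma:glivenko-cantelli}. The constraints on $\epsilon'$ make the prefactors $w_k$, $1/v_k$, $1/(1-v_{k'})$ (for $k'<K$) and $1-\sum_{j<k}w_j$ bounded uniformly on $S_{\epsilon'}$. A uniform bound on $n^{-1}|\cdot|$ then follows from the fact that $\sup_{h}|P_nh-Qh|\to 0$ almost surely while $\sup_h Qh<\infty$.

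\textbf{Step 1: bound the denominator.} On $S_{\epsilon'}$ we have $w_k\ge \underline{w}>0$ for $k\le K$, since $v_k\ge v_k^*-\epsilon'>0$ and $1-v_j\ge 1-v_j^*-\epsilon'>0$ for $j<K$. Hence for every $k\le K$
\[
f_{\vs,\btheta}(x)\ge w_k f(x-\theta_k)\ge \underline{w}\, f(x-\theta_k),
\]
and also $f_{\vs,\btheta}(x)\ge \underline{w}\, f(x-\theta_{1})$ with $\theta_1$ in an $\epsilon'$-neighbourhood of $\theta_1^*$. We may enlarge $\Theta$ to a compact set containing these neighbourhoods, since Lemma \ref{lemma:glivenko-cantelli} holds for any compact set. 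The same bound $f_{\vs,\btheta}\ge \underline w f(\cdot-\theta_k)$ is what we use for the terms in which the $k$-th atom appears in the numerator.

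\textbf{Step 2: bound the individual terms.} The $\theta$-derivative terms are controlled directly:
\[
\Bigl|\frac{\partial l(x_i)}{\partial\theta_{kd}}\Bigr|=\frac{w_k|\partial_d f(x_i-\theta_k)|}{f_{\vs,\btheta}(x_i)}\le \frac{|\partial_d f(x_i-\theta_k)|}{f(x_i-\theta_k)},
\]
and the right-hand side is an element of $\sH_2$ with $\theta_1=\theta_2$. Similarly, $w_k|\partial_{dd'}f(x_i-\theta_k)|/f_{\vs,\btheta}(x_i)\le |\partial_{dd'}f(x_i-\theta_k)|/f(x_i-\theta_k)$, which lies in $\sH_3$ (with absolute value).

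The $v$-derivative terms are handled as follows. We have $\bigl(1-\sum_{j<k}w_j\bigr)\sum_{j>k}\bar w_j f(x-\theta_j)=\sum_{j>k}w_j f(x-\theta_j)/v_k\le f_{\vs,\btheta}(x)/v_k$. Together with $w_k\ge \underline w$ and the identity $1-\sum_{j<k}w_j=w_k/v_k$, this gives
\[
\Bigl|\frac{\partial l(x_i)}{\partial v_k}\Bigr|\le \frac{1}{v_k}\Bigl(\frac{w_kf(x_i-\theta_k)}{f_{\vs,\btheta}(x_i)}+1\Bigr)\le \frac{2}{v_k^*-\epsilon'}.
\]
This bound is uniform and deterministic.

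\textbf{Step 3: assemble.} The products in $C,D,E$ are then bounded by $n^{-1}\sum_i h_1(X_i)h_2(X_i)$, with $h_1,h_2$ ranging over $\sH_2$ or constants. By Cauchy--Schwarz these are at most $\bigl(P_nh_1^2\bigr)^{1/2}\bigl(P_nh_2^2\bigr)^{1/2}$, and the squared classes ($p=2$) are exactly why Lemma \ref{lemma:glivenko-cantelli} includes $p=2$. The first-order terms divided by $v_k$ or $1-v_{k'}$ are bounded by $P_n$ of an $\sH_2$ element, or by the constant bound above, times $1/(v_k^*-\epsilon')$ or $1/(1-v_{k'}^*-\epsilon')$.

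Note that in $D$ and $E$ the factor $1/(1-v_{k'})$ only appears for $k'<k\le K$, so $k'\le K-1$ and it is bounded. This is why the hypothesis only involves $\max_{k\le K-1}v_k^*$; the factor $v_K^*=1$ never needs inverting in $1-v$.

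\textbf{Step 4: conclude.} Glivenko--Cantelli gives $\sup_h P_n|h|\le \sup_h Q|h|+1$ for all $n\ge n_0(\delta)$ with probability at least $1-\delta$. Here $\sup_h Q|h|<\infty$ by the envelope $H$ built in the proof of Lemma \ref{lemma:glivenko-cantelli}. To get a statement valid for every $n$, the finitely many $n<n_0$ are handled by enlarging $C$: $n^{-1}\sum_i H(X_i)$ is a.s. finite, hence tight for each fixed $n$.

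The main obstacle is Step 1–2 bookkeeping. The infinitely many tail atoms $\theta_j$, $j>K$, may lie outside $\Theta$, so they must never appear in a numerator without being absorbed into $f_{\vs,\btheta}$. Lower-bounding the denominator by the matching $k$-th term, as above, is precisely what avoids any dependence on them.
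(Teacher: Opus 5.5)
Your proposal has a genuine gap in the bound for the $v$-derivatives. The identity it rests on is wrong, and the correct version breaks down at $k=K$.

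\textbf{The identity.} For $j>k$ one has $w_j=(1-v_k)\prod_{i<k}(1-v_i)\,\bar w_j$. Hence
\[
\Bigl(1-\sum_{j<k}w_j\Bigr)\sum_{j>k}\bar w_jf(x-\theta_j)=\frac{1}{1-v_k}\sum_{j>k}w_jf(x-\theta_j)\le\frac{f_{\vs,\btheta}(x)}{1-v_k},
\]
not $f_{\vs,\btheta}(x)/v_k$. So your argument actually yields $|\partial l(x)/\partial v_k|\le 1/v_k+1/(1-v_k)$.

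\textbf{Where it fails.} For $k\le K-1$ this is still a uniform deterministic bound on $S_{\epsilon'}$. For $k=K$ it is worthless: $v_K^*=1$, so $S_{\epsilon'}$ contains $v_K$ arbitrarily close to, and equal to, $1$. Since $\partial l/\partial v_K$ enters $E_{n,K,K}$, $E_{n,k,K}$ and $D_{n,k,K,d}$, these coefficients are not covered. You are right that $1-v_K$ is never inverted in the explicit prefactors of $D$ and $E$. However, the inversion is hidden inside $\partial l/\partial v_K$ itself. The tail atoms enter its numerator with weights $\prod_{i<K}(1-v_i)\bar w_j$, whose total is of order one. They enter $f_{\vs,\btheta}$ only with the extra factor $1-v_K$.

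\textbf{Why your strategy cannot be rescued.} Your last paragraph says tail atoms may lie outside $\Theta$ and must never appear in a numerator. No proof can follow that principle. If tail atoms may range over all of $\R^D$, the bound on $E_{n,K,K}$ is false. Take a Gaussian kernel and set $v_{1:K}=v^*_{1:K}$, $\theta_{1:K}=\theta^*_{1:K}$, $v_{K+1}=1$, and $\theta_{K+1}=X_{i^*}$, the observation minimizing $P^*f(X_i)$. Then $|\partial l(X_{i^*})/\partial v_K|\ge w_K^*f(0)/P^*f(X_{i^*})-1$. Since $\min_iP^*f(X_i)$ is of order $1/n$ up to logarithmic factors, $n^{-1}|E_{n,K,K}|$ is of order $n$ up to logarithmic factors.

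\textbf{The missing ingredient.} All atoms, tail ones included, lie in the compact set $\Theta$, because they are drawn from $P_0$. For $k=K$ you must then use the two-parameter ratio class $\sH_1$ with a tail atom in the numerator. From $f_{\vs,\btheta}\ge w_Kf(\cdot-\theta_K)$ and $1-\sum_{j<K}w_j=w_K/v_K$,
\[
\Bigl|\frac{\partial l(x)}{\partial v_K}\Bigr|\le\frac{1}{v_K}\Bigl(1+\sum_{j>K}\bar w_j\frac{f(x-\theta_j)}{f(x-\theta_K)}\Bigr).
\]
The weights $\bar w_j$ sum to at most one, so Jensen applies. Consequently the empirical averages of this quantity and of its square are bounded by $\sup_{h\in\sH_1}P_nh$ with $p=1,2$. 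This supremum is taken over a compact set containing $\Theta$ and the $\epsilon'$-balls around the $\theta_k^*$. This is exactly how the paper proceeds: it bounds $n^{-1}\sum_i|\partial l(X_i)/\partial v_k|$ by a multiple of $\sup_{\theta_1,\theta_2\in\Theta}n^{-1}\sum_if(X_i-\theta_1)/f(X_i-\theta_2)$.

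With this repair, the rest of your proposal is sound and follows essentially the paper's route. That includes the $\theta$-derivative bounds via $|\partial_d\log f(x-\theta_k)|$, Cauchy--Schwarz for the products, and the handling of finitely many $n$.
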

\begin{proof}
By the constraints on $\epsilon$ we immediately get that
\[
\sup_{(\vs, \btheta) \in S_{\epsilon'}} \, \sup_{k = 1, \dots, K}\, \frac{1}{v_k} < \infty \quad \text{and} \quad \sup_{(\vs, \btheta) \in S_{\epsilon'}} \, \sup_{k = 1, \dots, K-1}\, \frac{1}{1-v_k} < \infty.
\]
Then by \eqref{eq:definition_C}, \eqref{eq:definition_D} and \eqref{eq:definition_E}, to prove the result it suffices to show that there exists $K = K(\delta, P^*)$ such that for every $k,k' = 1, \dots, K$ and $d, d' = 1, \dots, D$ it holds
\begin{equation}\label{eq:toshow_bounded1}
 Q^{(n)}\left(\sup_{(\vs, \btheta) \in S_{\epsilon'}} \,\frac{1}{n}\sum_{i = 1}^n\left \lvert\frac{\partial l_{\vs, \btheta}(X_{i})}{\partial \theta_{kd}}\right\rvert  \leq K\right) \geq 1-\delta
\end{equation}
and
\begin{equation}\label{eq:toshow_bounded2}
Q^{(n)}\left(\sup_{(\vs, \btheta) \in S_{\epsilon'}} \,\frac{1}{n}\sum_{i = 1}^n\left \lvert\frac{\partial l_{\vs, \btheta}(X_{i})}{\partial v_k}\right\rvert  \leq K\right) \geq 1-\delta
\end{equation}
and
\begin{equation}\label{eq:toshow_bounded3}
Q^{(n)}\left(\sup_{(\vs, \btheta) \in S_{\epsilon'}} \,\frac{1}{n}\sum_{i = 1}^n\left \lvert\frac{\partial_{dd'} f(X_{i}-\theta_k)}{f_{\vs, \btheta}(X_i)}\right\rvert  \leq K\right) \geq 1-\delta
\end{equation}
and
\begin{equation}\label{eq:toshow_bounded4}
Q^{(n)}\left(\sup_{(\vs, \btheta) \in S_{\epsilon'}} \,\frac{1}{n}\sum_{i = 1}^n\left \lvert\left[\frac{\partial l_{\vs, \btheta}(X_{i})}{\partial \theta_{kd}} \right] \left[\frac{\partial l_{\vs, \btheta}(X_{i})}{\partial \theta_{k'd'}}\right]\right\rvert \leq K\right) \geq 1-\delta
\end{equation}
and
\begin{equation}\label{eq:toshow_bounded5}
Q^{(n)}\left(\sup_{(\vs, \btheta) \in S_{\epsilon'}} \,\frac{1}{n}\sum_{i = 1}^n\left \lvert\left[\frac{\partial l_{\vs, \btheta}(X_{i})}{\partial v_k} \right] \left[\frac{\partial l_{\vs, \btheta}(X_{i})}{\partial v_{k'}}\right]\right\rvert \leq K\right) \geq 1-\delta
\end{equation}
and
\begin{equation}\label{eq:toshow_bounded6}
Q^{(n)}\left(\sup_{(\vs, \btheta) \in S_{\epsilon'}} \,\frac{1}{n}\sum_{i = 1}^n\left \lvert\left[\frac{\partial l_{\vs, \btheta}(X_{i})}{\partial \theta_{kd}} \right] \left[\frac{\partial l_{\vs, \btheta}(X_{i})}{\partial v_{k'}}\right]\right\rvert  \leq K\right) \geq 1-\delta.
\end{equation}

As regards \eqref{eq:toshow_bounded1}, \eqref{eq:toshow_bounded2} and \eqref{eq:toshow_bounded3}, notice that
\[
\begin{aligned}
\sup_{(\vs, \btheta) \in S_{\epsilon'}} \,\frac{1}{n}\sum_{i = 1}^n\left \lvert\frac{\partial l_{\vs, \btheta}(X_{i})}{\partial \theta_{kd}}\right\rvert &= \sup_{(\vs, \btheta) \in S_{\epsilon'}} \,\frac{1}{n}\sum_{i = 1}^n\left \lvert\frac{ \partial_{d} f(X_{i}-\theta_k)}{f_{\vs, \btheta}(X_{i})}\right\rvert\\
&\leq \sup_{\theta_1, \theta_2 \in \Theta} \,\frac{K}{n}\sum_{i = 1}^n\left \lvert\frac{\partial_{d} f(X_{i}-\theta_1)}{f(X_i - \theta_2)}\right\rvert
\end{aligned}
\]
and
\[
\begin{aligned}
\sup_{(\vs, \btheta) \in S_{\epsilon'}} \,\frac{1}{n}\sum_{i = 1}^n\left \lvert\frac{\partial l_{\vs, \btheta}(X_{i})}{\partial v_k}\right\rvert &= \sup_{(\vs, \btheta) \in S_{\epsilon'}} \,\frac{1}{n}\sum_{i = 1}^n\left \lvert\frac{ f(X_{i}-\theta_k) - \sum_{j>K}\bar{w}_jf(X_i-\theta_j)}{f_{\vs, \btheta}(X_{i})}\right\rvert\\
&\leq \sup_{\theta_1, \theta_2 \in \Theta} \,\frac{2K}{n}\sum_{i = 1}^n\left \lvert\frac{ f(X_{i}-\theta_1)}{f(X_i - \theta_2)}\right\rvert
\end{aligned}
\]
and
\[
\sup_{(\vs, \btheta) \in S_{\epsilon'}} \,\frac{1}{n}\sum_{i = 1}^n\left \lvert\frac{\partial_{dd'} f(X_{i}-\theta_k)}{f_{\vs, \btheta}(X_i)}\right\rvert \leq \sup_{\theta_1, \theta_2 \in \Theta} \,\frac{K}{n}\sum_{i = 1}^n\left \lvert\frac{\partial_{dd'} f(X_{i}-\theta_1)}{f(X_i - \theta_2)}\right\rvert,
\]
so that \eqref{eq:toshow_bounded1}, \eqref{eq:toshow_bounded2} and \eqref{eq:toshow_bounded3} follow directly by Lemma \ref{lemma:glivenko-cantelli}.

As regards \eqref{eq:toshow_bounded4}, by H\"older inequality we have that
\[
\begin{aligned}
    \frac{1}{n}\sum_{i = 1}^n\left \lvert\left[\frac{\partial l_{\vs, \btheta}(X_{i})}{\partial \theta_{kd}} \right] \left[\frac{\partial l_{\vs, \btheta}(X_{i})}{\partial \theta_{k'd'}}\right]\right\rvert \leq \sqrt{\frac{1}{n}\sum_{i = 1}^n\left[\frac{\partial l_{\vs, \btheta}(X_{i})}{\partial \theta_{kd}}  \right]^2}\sqrt{\frac{1}{n}\sum_{i = 1}^n\left[\frac{\partial l_{\vs, \btheta}(X_{i})}{\partial \theta_{k'd'}}  \right]^2},
\end{aligned}
\]
and therefore \eqref{eq:toshow_bounded4} follows as before by applying Lemma \ref{lemma:glivenko-cantelli}. Similarly holds for \eqref{eq:toshow_bounded5} and \eqref{eq:toshow_bounded6}.
\end{proof}

We can finally provide the proof of Theorem \ref{thm:evidence_lower_bound}.
\begin{proof}[Proof of Theorem \ref{thm:evidence_lower_bound}]
We can rewrite the integral in terms of the laws of $(\w, \btheta)$ in \eqref{GEM}, i.e.
\[
\begin{aligned}
\int \prod_{i = 1}^n\frac{P f(X_i)}{P^* f(X_i)}\Pi(\d P) &= \int \prod_{i = 1}^n\frac{\sum_{j \geq 1}w_jf(X_i-\theta_j)}{\sum_{k = 1}^Kw_k^*f(X_i-\theta^*_k)}G_0(\d \vs)P_0^{(\infty)}(\d \btheta)\\
& = \int e^{l_{\vs, \btheta}(X_{1:n})-l_{\vs^*, \btheta^*}(X_{1:n})}G_0(\d \vs)P_0^{(\infty)}(\d \btheta),
\end{aligned}
\]
where $l_{\vs, \btheta}(x_{1:n})$ is as in \eqref{eq:def_loglikelihood} and $G_0$ denotes the law of $\vs$ induced by \eqref{GEM}, i.e.\ $G_0 = \otimes_{k \geq 1} \, \text{Beta}(1, \alpha)$.  By \eqref{eq:taylor_expansion} we can then write
\[
\int \prod_{i = 1}^n\frac{P f(X_i)}{P^* f(X_i)}\Pi(\d P) = \int e^{\sum_{k = 1}^KA_{n,k}(\vs^*, \btheta^*)(\theta_k - \theta_k^*) + \sum_{k = 1}^KB_{n,k}(\vs^*, \btheta^*)(v_k - v_k^*) + R^{(\vs, \btheta)}_n(\hat{\vs}, \hat{\btheta})}G_0(\d \vs)P_0^{(\infty)}(\d \btheta),
\]
with $A_{n,k}$, $B_{n,k}$ and $R_n$ as in \eqref{eq:definition_A}, \eqref{eq:definition_B} and \eqref{eq:remainder} respectively. Denote now with $T_n$ the set $S_\epsilon$ as in \eqref{eq:definition_S}, with $\epsilon = n^{-1/2}$, and consider now the change of variables
\[
\tilde{\theta}_k = \sqrt{n}(\theta_k - \theta_k^*),  \quad \text{and} \quad \tilde{v}_k = \sqrt{n}(v^*_k - v_k), \quad k = 1, \dots, K
\]
and $(\tilde{v}_k, \tilde{\theta}_k) = (v_k, \theta_k)$ for $k > K$, so that 
\[
\begin{aligned}
\int &\prod_{i = 1}^n\frac{P f(X_i)}{P^* f(X_i)}\Pi(\d P) \geq \int_{T_n} e^{\sum_{k = 1}^KA_{n,k}(\vs^*, \btheta^*)(\theta_k - \theta_k^*) + \sum_{k = 1}^KB_{n,k}(\vs^*, \btheta^*)(v_k - v_k^*) + R^{(\vs,\btheta)}_n(\hat{\vs}, \hat{\btheta})}G_0(\d \vs)P_0^{(\infty)}(\d \btheta)\\
&\geq \frac{r^{2K-1}}{n^{DK/2+K/2}}\int_{T} e^{\sum_{k = 1}^K\left\{\frac{A_{n,k}(\vs^*, \btheta^*)}{\sqrt{n}}\tilde{\theta_k} - \frac{B_{n,k}(\vs^*, \btheta^*)}{\sqrt{n}}\tilde{v}_k\right\}+\tilde{R}^{(\tilde{\vs},\tilde{\btheta})}_n(\hat{\vs}, \hat{\btheta})}\left(1-v_K^*+\frac{\tilde{v}_K}{\sqrt{n}}\right)^{\alpha-1}G_0(\d \tilde{\vs}_{-(1:K)})P_0^{(\infty)}(\d \tilde{\btheta}),
\end{aligned}
\]
where $\tilde{\vs}_{-(1:K)}$ denotes the sequence $\tilde{\vs}$ without the first $K$ entries, and
\[
T = \left\{\left(\tilde{\vs}, \tilde{\btheta}\right) \, \mid \, \tilde{v}_k \in[0,1],  ||\tilde{\theta}_{k'}|| \leq 1, \text{ for } k \geq 1, k' = 1, \dots, K \right\}
\]
and $r >0$ is such that $p_0(\theta) \geq r$ for $\theta$ in an arbitrary neighborhood of $\theta_k^*$, for $k = 1, \dots, K$ and Beta$(v \mid 1, \alpha) \geq r$ for $v$ in an arbitrary neighborhood of $v_k^*$, for $k = 1, \dots, K-1$. Notice that such $r$ must exist by $(A3)$. Moreover, with a little abuse of notation, we still use $(\hat{\vs}, \hat{\btheta})$ to denote the vector
\[
\hat{\btheta}_{1:K} = \theta^*_{1:K}+\frac{t}{\sqrt{n}}\tilde{\btheta}_{1:K}, \quad \hat{\vs}_{1:K} = v^*_{1:K}-\frac{t}{\sqrt{n}}\tilde{\vs}_{1:K}, \quad \hat{v}_k = \tilde{v}_k, \quad \hat{\theta}_k = \tilde{\theta}_k,
\]
for some $t\in (0,1)$ and for every $k > K$. Finally, we use the notation
\[
\tilde{R}^{(\tilde{\vs},\tilde{\btheta})}_n(\hat{\vs}, \hat{\btheta}) = R^{\left(\vs^*-\tilde{\vs}/\sqrt{n},\btheta^*+\tilde{\btheta}/\sqrt{n}\right)}_n(\hat{\vs}, \hat{\btheta}),
\]
with $R_n^{(\vs, \btheta)}(\hat{\vs}, \hat{\btheta})$ as in \eqref{eq:remainder}.

By construction $v_K^* = 1$, which implies that
\[
\left(1-v_K^*+\frac{\tilde{v}_K}{\sqrt{n}}\right)^{\alpha-1} = \frac{\left(\tilde{v}_K\right)^{\alpha-1}}{n^{\alpha/2-1/2}}
\]
and therefore
\begin{equation}\label{eq:technical_lower_bound}
\begin{aligned}
\int &\prod_{i = 1}^n\frac{P f(X_i)}{P^* f(X_i)}\Pi(\d P) \\
&
\begin{aligned}
\geq  \frac{r^{2K-1}}{n^{DK/2+(K-1)/2+\alpha/2}}\int_{T} \left(\tilde{v}_K\right)^{\alpha-1}e^{\sum_{k = 1}^K\left\{\frac{A_{n,k}(\vs^*, \btheta^*)}{\sqrt{n}}\tilde{\theta_k} - \frac{B_{n,k}(\vs^*, \btheta^*)}{\sqrt{n}}\tilde{v}_k\right\}+\tilde{R}^{(\tilde{\vs},\tilde{\btheta})}_n(\hat{\vs}, \hat{\btheta})}G_0(\d \tilde{\vs}_{-(1:K)})P_0^{(\infty)}(\d \tilde{\btheta}).
\end{aligned}
\end{aligned}
\end{equation}
Fix now $\epsilon > 0$, let $C = C(\epsilon) > 0$ and consider the events
\[
\Omega_{1,n}(C) = \left\{X_{1:n} \, \mid \, \sup_{\vs^*, \btheta^*}\,\frac{|A_{n,k}|}{\sqrt{n}} \leq C, \sup_{\vs^*, \btheta^*}\,\frac{||B_{n,k}||}{\sqrt{n}} \leq C \right\},
\]
with $(\vs^*, \btheta^*)$ as in \eqref{eq:definition_star}, and
\[
\Omega_{2,n}(C) = \left\{X_{1:n} \, \mid \,  \sup_{(\hat{\vs}, \hat{\btheta}) \in S_{\epsilon'}, (\tilde{\vs}, \tilde{\btheta}) \in T}\,\left\lvert \tilde{R}^{(\tilde{\vs}, \tilde{\btheta})}_n(\hat{\vs}, \hat{\btheta})\right\rvert  \leq C \right\},
\]
with $\epsilon'$ as in Corollary \eqref{cor:upper_bound_taylor2}.Then if $X_{1:n} \in \Omega_{1,n}(C) \cap \Omega_{2,n}(C)$, for some $C > 0$, by \eqref{eq:technical_lower_bound} it is easy to deduce that
\[
\int \prod_{i = 1}^n\frac{P f(X_i)}{P^* f(X_i)}\Pi(\d P) \geq cn^{-DK/2-(K-1)/2 - \alpha/2},
\]
for $c > 0$ depending on $C$ and $r$. Thus, it suffices to prove that there exists $C > 0$ such that
\begin{equation}\label{eq:prob_stat}
Q^{(n)}\left(X_{1:n} \in \Omega_{1,n}(C) \cap \Omega_{2,n}(C) \right) \geq 1-\epsilon.
\end{equation}
By Corollary \ref{cor:upper_bound_taylor1} it follows directly that there exists $C_1 > 0$ such that
\[
Q^{(n)}\left(X_{1:n} \not\in \Omega_{1,n}(C_1) \right) \leq \frac{\epsilon}{2}.
\]
Analogously, by definition of $R_n(\vs, \btheta)$ in \eqref{eq:remainder} we have that
\[
\begin{aligned}
\tilde{R}^{(\tilde{\vs}, \tilde{\btheta})}_n(\hat{\vs}, \hat{\btheta}) = \frac{1}{2n}\sum_{k, k' = 1}^K(\tilde{\theta}_k)^T&C_{n,k, k'}(\hat{\vs}, \hat{\btheta})\tilde{\theta}_{k'}-\frac{1}{2n}\sum_{k, k' = 1}^K(\tilde{\theta}_k)^TD_{n,k, k'}(\hat{\vs}, \hat{\btheta})\tilde{v}_{k'}\\
& +\frac{1}{2n}\sum_{k, k' = 1}^KE_{n, k, k'}(\hat{\vs}, \hat{\btheta})\tilde{v}_{k}\tilde{v}_{k'}
\end{aligned}
\]
and therefore by Corollary \ref{cor:upper_bound_taylor2} there exists $C_2 > 0$ such that
\[
Q^{(n)}\left(X_{1:n} \not\in \Omega_{2,n}(C_2) \right) \leq \frac{\epsilon}{2}.
\]
Thus \eqref{eq:prob_stat} follows with $C = C_1 + C_2$.
\end{proof}

\paragraph{Acknowledgements.} The author would like to thank Professor Surya Tokdar for
invaluable conversations and comments. The author was partially supported by the National Institute of Health (grant ID 1R01-GM163225-01).






\bibliographystyle{chicago}
\bibliography{DPM_bib.bib}

\newpage
\begin{appendix}
\section{Proofs}
\subsection{Proof of Lemma \ref{lemma:weights_priori}}
\begin{proof}[Proof of Lemma \ref{lemma:weights_priori}]
Consider point $1$. Fix $N \in \N$ and $R \in (0,1)$. Notice that by \eqref{GEM} we have that $-\log\left(\sum_{j > N}w_j \right) =- \sum_{j = 1}^N\log (1-v_j) \sim \text{Gamma}(N, \alpha)$, so that for every $\lambda > 0$ by Markov's inequality it holds
\[
\begin{aligned}
\Pi\left(\sum_{j > N}w_j >R \right) &= \Pi\left(e^{\lambda \sum_{j = 1}^N\log(1-v_j)} > e^{\lambda \log (R)} \right)\\
& \leq e^{-\lambda \log (R)}\E\left[e^{\lambda \sum_{j = 1}^N\log(1-v_j)} \right] = e^{-\lambda \log (R)}\left( 1+\frac{\lambda}{\alpha}\right)^{-N}.
\end{aligned}
\]
Choosing $\lambda = \alpha t$ with $t > 0$ we can write
\[
\Pi\left(\sum_{j > N}w_j >R \right) \leq e^{-\alpha t \log (R) - N\log (1+t)}.
\]
Fix now $R = n^{-\beta}$ and $N = \ceil{\beta' \log(n)}$, so that we have
\[
\Pi\left(\sum_{j > \ceil{\beta' \log n}}w_j > \frac{1}{n^\beta} \right) \leq e^{\log(n) \left[ \alpha\beta t  - \beta'\log (1+t)\right]}.
\]
Fix now $\beta' \geq e\alpha \beta$ and $t = \frac{\beta'}{\alpha\beta}-1$, so that
\[
\Pi\left(\sum_{j > \ceil{\beta' \log n}}W_j > \frac{1}{n^\beta} \right) \leq n^{-\alpha \beta},
\]
as desired.

As regards point $2$, by similar calculations we get
\[
\Pi\left(\sum_{j > \ceil{\beta' \log n}}w_j <  \frac{1}{n^\beta} \right) \leq e^{\log(n) \left[-\alpha \beta t - \beta' \log(1-t) \right]},
\]
with $t \in (0, 1)$. Consider now $\beta' < \alpha \beta$ and choose $t = 1-\frac{\beta'}{\alpha\beta}$, so that
\[
\Pi\left(\sum_{j > \ceil{\beta' \log n}}W_j <  \frac{1}{n^\beta} \right) \leq e^{\alpha \beta \log(n) \left[\frac{\beta'}{\alpha\beta} - \frac{\beta'}{\alpha\beta} \log \left(\frac{\beta'}{\alpha\beta} \right) - 1 \right]}.
\]
Since $h(x) = x - x \log(x) \to 0$ as $x \to 0$, we deduce that there exists $\beta^* = \beta^*(\delta) \leq \alpha\beta$ so that 
\[
\Pi\left(\sum_{j > \ceil{\beta' \log n}}W_j <  \frac{1}{n^\beta} \right) \leq n^{-(1-\delta)\alpha\beta},
\]
for every $\beta' \leq \beta^*$.
\end{proof}

\subsection{Proof of Lemma \ref{lemma: Kn_priori}}
\begin{proof}[Proof of Lemma \ref{lemma: Kn_priori}]
It is well-known \citep[Lemma 2.1]{korwar1973contributions} that
\[
\Phi \left(c_{k+1} \text{ is distinct from } c_j \text{ for every } j = 1, \dots, k \right) = \frac{\alpha}{\alpha+k},
\]
for every $k \geq 0$. Then notice that
\[
\begin{aligned}
\E\left[K_n \right] &= \alpha\sum_{k = 0}^{n-1}\frac{1}{\alpha+k} \leq 1+\alpha\int_0^{n-1}\frac{1}{\alpha+x}\, \d x =1+\alpha \log \left(1+\frac{n-1}{\alpha} \right),
\end{aligned}
\]
and
\[
\begin{aligned}
\E\left[K_n \right] &= \alpha\sum_{k = 0}^{n-1}\frac{1}{\alpha+k} \geq  \alpha\int_0^{n-1}\frac{1}{\alpha+x}\, \d x = \alpha \log \left(1+\frac{n-1}{\alpha} \right).
\end{aligned}
\]
As regards point $1$, for n large enough
\[
(1+\delta)\alpha \log \left(1+\frac{n-1}{\alpha} \right) > 3\E[K_n],
\]
and therefore the Chernoff bounds imply that
\[
\begin{aligned}
\Phi\biggl(K_n \geq (1+\delta)\alpha\log\left(1+\frac{n-1}{\alpha} \right) \biggr)& \leq  \P\biggl(K_n \geq 3\E[K_n] \biggr) \leq e^{-\alpha \log\left( 1+\frac{n-1}{\alpha}\right)},
\end{aligned}
\]
which means
\[
\Phi\biggl(K_n \geq (1+\delta)\alpha\log\left(1+\frac{n-1}{\alpha} \right) \biggr) \leq \left(1+\frac{n-1}{\alpha} \right)^{-\alpha} \leq cn^{-\alpha},
\]
 for some $c = c(\alpha)$ as desired. As regards point $2$, again by the Chernoff bounds we obtain
\[
\begin{aligned}
\Phi\biggl(K_n \leq (1-\delta)\alpha\log\left(1+\frac{n}{\alpha} \right) \biggr) &\leq \left(\frac{e^{-\delta}}{(1-\delta)^{1-\delta}} \right)^{\alpha \log\left( 1+\frac{n-1}{\alpha}\right)}\\
& = \left(1+\frac{n-1}{\alpha} \right)^{-\alpha\left[\delta - (1-\delta)\log(1-\delta) \right]}.
\end{aligned}
\]
Since $h(\delta) = \delta - (1-\delta)\log(1-\delta) \to 1$ as $\delta \to 1$, there exists $\delta \in (0,1)$ such that $h(\delta) = 3/4$, as desired.
\end{proof}

\subsection{Proof of Theorem \ref{thm:stick_post}}
Denote with $||f-g||_1$ the $L^1$ distance between probability density functions $f$ and $g$. We first prove a preliminary lemma on the rate of convergence for densities under the Hellinger distance: this follows naturally from the classical theory pioneered in \cite{ghosal2000convergence} and developed in subsequent works \citep{ghosal2001entropies, Ghosal2007, Subhashis2017}. See Theorem $4.3$ in \cite{hairault2022evidence} for a similar result in $L^1$ distance.
\begin{lemma}\label{lm:post_consistency}
Under assumptions $(A1)-(A3)$ and $(B1)-(B3)$, there exists $q > 0$ such that
\[
\Pi\left(||Pf - P^*f||_1 \geq \frac{(\log n)^q}{\sqrt{n}} \mid X_{1:n}\right) \to 0,
\]
as $n \to \infty$ in $Q^{(\infty)}$-probability.
\end{lemma}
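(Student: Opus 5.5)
The plan is to use the general posterior contraction theorem of \cite{ghosal2000convergence} in the form of \cite[Theorem 8.9]{Subhashis2017}, with rate $\epsilon_n = (\log n)^{q}/\sqrt{n}$ for the Hellinger distance $h$. The $L^1$ statement then follows from $\|Pf-P^*f\|_1 \leq 2h(Pf,P^*f)$. The theorem needs three ingredients:
\begin{enumerate}
\item an evidence lower bound, or prior mass condition;
\item a sieve $\mathcal{P}_n$ whose $\epsilon_n$-entropy satisfies $\log N(\epsilon_n,\{Pf: P\in\mathcal{P}_n\},h)\lesssim n\epsilon_n^2$;
\item a remaining-mass condition $\Pi(\mathcal{P}_n^c)\leq e^{-Cn\epsilon_n^2}$, with $C$ large.
\end{enumerate}

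For the denominator I will not use the usual Kullback--Leibler ball argument. Instead I will invoke Theorem \ref{thm:evidence_lower_bound} directly. With $Q^{(n)}$-probability at least $1-\epsilon$, the evidence is at least $c\,n^{-DK/2-(K-1)/2-\alpha/2}=e^{-O(\log n)}$, which is much larger than $e^{-C' n\epsilon_n^2}=e^{-C'(\log n)^{2q}}$ for any $q>1/2$. The standard argument (e.g.\ \cite[Lemma 8.10 / Theorem 8.11]{Subhashis2017}, where the prior mass condition is used only to lower bound the denominator) then reduces the problem to showing that
\[
\E_{Q}\left[\int_{\{h(Pf,P^*f)>M\epsilon_n\}}\prod_{i=1}^n \frac{Pf(X_i)}{P^*f(X_i)}\,\Pi(\d P)\right]\leq e^{-c\, n\epsilon_n^2 M^2}
\]
up to test errors, together with $\Pi(\mathcal{P}_n^c)\leq e^{-Cn\epsilon_n^2}$. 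Since $\epsilon$ is arbitrary, this gives convergence in probability.

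For the sieve I will take $\mathcal{P}_n = \{P=\sum_k w_k\delta_{\theta_k}: \sum_{k>N_n}w_k\leq \eta_n\}$ with $N_n=\lceil\beta'(\log n)^{2q}\rceil$ and $\eta_n=e^{-(\log n)^{2q}}$. Using the same Gamma computation as in Lemma \ref{lemma:weights_priori}, $-\log\sum_{k>N}w_k\sim\text{Gamma}(N,\alpha)$, so $\Pi(\mathcal{P}_n^c)\leq e^{-C(\log n)^{2q}}$ for $\beta'$ large. The atoms already lie in the compact set $\Theta$ by (A2)--(A3).

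For the entropy, a mixture is approximated in $L^1$ by truncating to $N_n$ atoms (error $\eta_n$). The weights are then discretized on an $\epsilon$-net of the simplex, and the atoms on an $\epsilon$-net of $\Theta$. The resulting perturbation is controlled through $\|f(\cdot-\theta)-f(\cdot-\theta')\|_1\leq \|\theta-\theta'\|\int\|\nabla f\|$, which is finite by (B3). This gives $\log N(\epsilon, \cdot, \|\cdot\|_1)\lesssim N_n\log(1/\epsilon)\lesssim (\log n)^{2q+1}$. Converting the $L^1$ cover into a Hellinger cover costs only a square root in $\epsilon$ (e.g.\ by working with the $\epsilon^2$-cover), so the entropy remains $O((\log n)^{2q+1})$. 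This is at most $n\epsilon_n^2=(\log n)^{2q}$ only if the exponents are balanced. Concretely, I will take the sieve at level $(\log n)^{2q-1}$, or equivalently pick $q$ large and $N_n\asymp(\log n)^{2q-1}$. With that choice, $\Pi(\mathcal{P}_n^c)\le e^{-c(\log n)^{2q-1}\cdot\log(1/\eta_n)/\ldots}$ must still beat the evidence bound. It does, because the denominator lower bound is only polynomial, so any superpolynomial decay suffices.

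This bookkeeping is where I expect the main obstacle. The remaining mass and the test errors need only beat $n^{-DK/2-(K-1)/2-\alpha/2}$, not $e^{-n\epsilon_n^2}$, which leaves much room. Still, exponents must be tracked so that the Hellinger tests of \cite{ghosal2000convergence}, with errors $e^{-cnM^2\epsilon_n^2}$, dominate the entropy $(\log n)^{2q}$ while the sieve complement is superpolynomially small. Taking $N_n=\lceil\beta'\log n\rceil\cdot(\log n)^{s}$ and $\eta_n=n^{-b}$ with $b$ large, and then choosing $q$ large enough, should make all three requirements hold simultaneously.
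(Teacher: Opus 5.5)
Your argument is sound, but it does not follow the paper's route. The paper proves this lemma in one line: it cites Theorem 4.3 of \cite{hairault2022evidence} and asserts that its hypotheses follow from (A1)--(A3) and (B1)--(B3), with no sieves, entropy or tests. You instead rederive the rate from the Ghosal--Ghosh--van der Vaart machinery, replacing the Kullback--Leibler prior-mass condition by the paper's own Theorem~\ref{thm:evidence_lower_bound}. This is legitimate and not circular: the proof in Section~\ref{sec:proof_main} uses only the Taylor expansion and Lemmas~\ref{lemma:glivenko-cantelli}--\ref{lemma:donsker}, never this lemma. Because the denominator is only polynomially small, the sieve complement and the type-II test errors need only be $o(n^{-DK/2-(K-1)/2-\alpha/2})$ rather than $e^{-Cn\epsilon_n^2}$. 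The paper's citation keeps the density rate independent of the delicate Theorem~\ref{thm:evidence_lower_bound}. Your route makes the lemma self-contained within the paper, at the cost of doing the entropy and test bookkeeping yourself.

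That bookkeeping is muddled in your middle paragraph and needs cleaning up. The Gamma computation gives $\Pi(\sum_{k>N}w_k>\eta)\le \eta^{\alpha}$ only when $N\ge e\alpha\log(1/\eta)$. So keeping $\eta_n=e^{-(\log n)^{2q}}$ while shrinking $N_n$ to $(\log n)^{2q-1}$ fails: the sieve complement then has prior mass close to one. The displayed bound $e^{-c(\log n)^{2q-1}\log(1/\eta_n)/\ldots}$ is also not what that computation yields.

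Your final parameter choice is the right one, and $s=0$ already suffices. Take $N_n=\lceil\beta'\log n\rceil$ and $\eta_n=n^{-b}$, with $b\ge 1$, $\alpha b>DK/2+(K-1)/2+\alpha/2$ and $\beta'\ge e\alpha b$. Then:
\begin{itemize}
\item $n^{DK/2+(K-1)/2+\alpha/2}\,\Pi(\mathcal{P}_n^c)\to 0$.
\item The truncation error $2\eta_n$ is below $\epsilon_n^2$.
\item The Hellinger entropy of the sieve is $O(N_n\log n)=O((\log n)^2)$, using an $\epsilon_n^2$-cover in $L^1$. Hence $\epsilon_n\asymp \log n/\sqrt{n}$ works.
\item The tests have errors of order $e^{-cM^2(\log n)^2}$, which beat the polynomial evidence bound.
\end{itemize}
Finally, $\|Pf-P^*f\|_1\le 2h(Pf,P^*f)$, and the constant $2M$ is absorbed by taking any $q>1$ in the statement.
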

\begin{proof}
This is a special case of Theorem $4.3$ in \cite{hairault2022evidence}, whose assumptions are implied by $(A1)-(A3)$ and $(B1)-(B3)$.
\end{proof}
We can now give the proof of Theorem \ref{thm:stick_post}. 
\begin{proof}[Proof of Theorem \ref{thm:stick_post}]
Fix $\delta > 0$ and $\epsilon > 0$. Define the event
\[
A_n = \left\{X_{1:n} \mid \int \prod_{i = 1}^n\frac{P f(X_i)}{P^* f(X_i)}\Pi(\d P) \geq cn^{-DK/2 - (K-1)/2 -\alpha/2} \right\}.
\]
By Theorem \ref{thm:evidence_lower_bound} there exists $c = c(\epsilon) > 0$ such that $Q^{(n)}(A_n) \geq 1-\epsilon/2$. 

Combined with Lemma \ref{lm:post_consistency}, it thus suffices to prove that
\[
\begin{aligned}
\E\biggl[&\mathbbm{1}_{A_n}(X_{1:n})\Pi\left(\left\{\sum_{j > K}w_j > \frac{1}{n^{1/2-\delta}}\right\} \cap \left\{||Pf - P^*f||_1 \leq \frac{(\log n)^q}{\sqrt{n}} \right\} \mid X_{1:n} \right) \biggr]\\
& \leq cn^{DK/2 + (K-1)/2 +\alpha/2}\Pi\left(\left\{\sum_{j > K}w_j > \frac{1}{n^{1/2-\delta}}\right\} \cap \left\{||Pf - P^*f||_1 \leq \frac{(\log n)^q}{\sqrt{n}} \right\} \right) \to 0,
\end{aligned}
\]
as $n \to \infty$. This is implied by Proposition \ref{prop:prior_bound} below.
\end{proof}
\begin{proposition}\label{prop:prior_bound}
Consider the same setting of Theorem \ref{thm:stick_post}. Then for every $\alpha > 0$ and $\delta > 0$ there exists $\beta > 0$ such that
\[
\Pi\left(\left\{\sum_{j > K}w_j > \frac{1}{n^{1/2-\delta}}\right\} \cap \left\{||Pf - P^*f||_1 \leq \frac{(\log n)^q}{\sqrt{n}} \right\} \right) < n^{-DK/2 - (K-1)/2 -\alpha/2 -\beta},
\]
for $n$ greater than some fixed $N$.
\end{proposition}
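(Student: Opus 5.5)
We bound the prior probability in Proposition \ref{prop:prior_bound} by first replacing the $L^1$ constraint with the moment-type constraints from \eqref{eq:lower_bound_main}, as in \eqref{eq:to_prove_main}. Write $\eta_n = (\log n)^q/(m\sqrt{n})$. On the event of interest, $P(B_0)\le \eta_n$, $|P(B_k)-w_k^*|\le \eta_n$, and $\int_{B_k}\|\theta-\theta_k^*\|^2P(\d\theta)\le 2\eta_n$ for each $k$. We then partition the event according to the allocation pattern of the first $K$ atoms. For each map $\tau:[K]\to\{0,1,\dots,K\}$ recording which $B_j$ contains $\theta_k$ (with $0$ meaning $B_0$), we obtain a sub-event. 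There are finitely many patterns, so it suffices to bound each one by $n^{-(DK+K+\alpha-1)/2-\beta}$.

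\emph{Good pattern.} Suppose $\tau$ is a bijection onto $[K]$. Then each $B_k$ receives mass $w_{\tau^{-1}(k)}$ from the first $K$ atoms, plus tail mass. Since the tail mass $\sum_{j>K}w_j$ exceeds $n^{-1/2+\delta}\gg\eta_n$, this mass must also lie essentially inside $\cup_k B_k$. Use the independence \eqref{eq:independence_normalization}: write the tail as $(1-\sum_{j\le K}w_j)\,\bar P$, where $\bar P\sim \mathrm{DP}(\alpha,P_0)$ is independent of the first $K$ components.

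The constraints then force the following.
\begin{itemize}
\item The $v_1,\dots,v_{K-1}$ lie in an $O(\eta_n)$-window around $v^*$ (up to permutation), giving factor $\eta_n^{K-1}$.
\item $\theta_{\tau^{-1}(k)}$ lies within $O(\eta_n^{1/2})$ of $\theta_k^*$, with the first-moment constraint sharpening this. The key point is that the first-moment constraint $\|\int_{B_k}(\theta-\theta_k^*)P\|\le\eta_n$ gives each atom location a window of size $\eta_n/w_k^*$ once the tail contribution is accounted for, giving factor $\eta_n^{DK}$ up to logs.
\item $1-\sum_{j\le K}w_j=\prod_{j\le K}(1-v_j)\ge n^{-1/2+\delta}$, where $1-v_K$ has density $\propto(1-v_K)^{\alpha-1}$ near $0$.
\end{itemize}
The new ingredient is the tail. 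Let $s=\prod_{j\le K}(1-v_j)\in[n^{-1/2+\delta},1]$. The second-moment constraint requires $s\int_{B_k}\|\theta-\theta_k^*\|^2\bar P(\d\theta)\le 2\eta_n$. Under $\bar P\sim\mathrm{DP}$ with $P_0$ having a density, the probability that $\bar P$ puts all but $\eta_n/s$ of its mass within radius $\rho=(\eta_n/s)^{1/2}$ of the $K$ points is small. Concretely, by small-ball estimates for the DP, $\Pi(\bar P(\cup_k B(\theta_k^*,\rho))\ge 1-\epsilon)\lesssim \rho^{D}\cdot(\text{stuff})$. The cleanest version: the first atom of $\bar P$ carries mass $\ge$ some constant with positive probability and must land in a $\rho$-ball. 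Iterating over $O(\log n)$ atoms gives $\rho^{D}$ up to logs for the first unit of mass.

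Combining, the $v_K$-integral over $1-v_K\ge n^{-1/2+\delta}$ contributes at most $1$, but the extra factor is $\rho^D=(\eta_n/s)^{D/2}$. This must be integrated against the law of $s\sim(1-v_K)^{\alpha-1}$. Compared with the evidence rate, where $1-v_K\approx n^{-1/2}$ contributes $n^{-\alpha/2}$, we need
\[
\int_{n^{-1/2+\delta}}^1 u^{\alpha-1}\,\min\bigl(1,(\eta_n/u)^{D/2}\bigr)\,\d u \le n^{-\alpha/2-\beta}.
\]
For $D/2$ versus $\alpha$ this is not automatic. I would therefore refine the tail argument so that it requires the \emph{entire} tail DP to concentrate, yielding a factor decaying faster than any polynomial in $u/\eta_n\ge n^{\delta}$. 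An infinite DP concentrating all but $\eta_n/u$ of its mass into shrinking balls requires $\gtrsim\log(u/\eta_n)$ atoms each hitting balls of volume $\rho^D$. This gives a bound like $\rho^{cD\log(n^\delta)}$, which is superpolynomially small and more than compensates.

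\emph{Bad patterns.} If some $\theta_k$, $k\le K$, lies in $B_0$, then $w_k\le\eta_n$, which costs an extra factor $\eta_n$ in the $v$'s. If two atoms share a $B_j$, then some $B_{j'}$ must be filled by the tail. In either case the same bookkeeping yields an extra polynomial factor, with the tail-concentration estimate again being superpolynomial.

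\textbf{Main obstacle.} The main difficulty is the tail small-ball estimate: quantifying $\Pi_{\mathrm{DP}}\bigl(\bar P(B_0)+\sum_k\int_{B_k}\|\theta-\theta_k^*\|^2\bar P\le\epsilon\bigr)$ sharply enough, uniformly over the first $K$ components, to beat $n^{-\alpha/2}$. I would attempt this via the stick-breaking of $\bar P$ and Lemma \ref{lemma:weights_priori}, part 2. With high prior probability, the first $\beta'\log(1/\epsilon)$ atoms carry more than $\epsilon$ of the mass, so a constant fraction of them must fall in balls of radius $\epsilon^{1/2}$. This gives probability $\epsilon^{cD\log(1/\epsilon)}$ plus the Lemma \ref{lemma:weights_priori} failure probability $\epsilon^{(1-\delta)\alpha}$, which must be tuned to exceed the required exponent.
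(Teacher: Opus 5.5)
Your route differs from the paper's, but the step you flag as the main obstacle fails as written, and the proposed fix is false. The normalized tail $\bar P\sim\mathrm{DP}(\alpha,P_0)$ does not need $\gtrsim\log(u/\eta_n)$ atoms to satisfy $\bar P(B_0)\le\epsilon$ and $\sum_k\int_{B_k}\lVert\theta-\theta_k^*\rVert^2\,\bar P(\d\theta)\le 2\epsilon$, with $\epsilon=\eta_n/u$. It can put mass $1-c\epsilon$ on its first atom (prior probability $\asymp\epsilon^{\alpha}$) and place that atom within $\sqrt{\epsilon}$ of $\theta_1^*$ (probability $\asymp\epsilon^{D/2}$). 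So this event has probability at least $c\,\epsilon^{\alpha+D/2}$, which is polynomial, not $\rho^{cD\log(n^{\delta})}$.

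Your fallback does not close either. If you simply \emph{add} the failure probability $\epsilon^{(1-\delta')\alpha}$ of Lemma \ref{lemma:weights_priori} (part 2), your own inequality gives $\int_{n^{-1/2+\delta}}^1u^{\alpha-1}(\eta_n/u)^{(1-\delta')\alpha}\,\d u\asymp\eta_n^{(1-\delta')\alpha}$. This exceeds $n^{-\alpha/2}$ by a polynomial factor for every $\delta'\in(0,1)$, so no tuning helps.

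The missing observation is that the two costs multiply. Atom locations are independent of the weights. On the failure event the heaviest tail atom has weight $\gtrsim 1/\log(1/\epsilon)$, so it must still lie within $O((\epsilon\log(1/\epsilon))^{1/2})$ of some $\theta_k^*$. This gives a tail factor $\lesssim\epsilon^{(1-\delta')\alpha}(\epsilon\log(1/\epsilon))^{D/2}$, and then, up to logarithms,
\[
\int_{n^{-1/2+\delta}}^1u^{\alpha-1}\Bigl(\frac{\eta_n}{u}\Bigr)^{(1-\delta')\alpha+D/2}\d u\lesssim\eta_n^{\alpha}\bigl(\eta_n n^{1/2-\delta}\bigr)^{D/2-\delta'\alpha}.
\]
This is a polynomial gain as soon as $\delta'\alpha<D/2$.

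The complementary event also needs a real argument. ``A constant fraction must fall in balls of radius $\epsilon^{1/2}$'' does not follow from the second-moment constraint, because an atom of weight $w_j$ only has to lie within $(2\epsilon/w_j)^{1/2}$. One fix is to apply Lemma \ref{lemma:weights_priori} at a level $\epsilon^{b}$ with $b<1$. Then on the complement most of the first $J$ atoms have weight $\gg\epsilon$, each confined to a shrinking ball. The price is that $\delta'\alpha$ is replaced by $\alpha(1-b(1-\delta'))$ in the condition above.

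The good-pattern bookkeeping has related problems. On $\{\sum_{j>K}w_j>n^{-1/2+\delta}\}$ the first $K$ weights are \emph{not} in an $O(\eta_n)$-window around $w^*$. The constraint is $|w_{\tau^{-1}(k)}+s\bar P(B_k)-w_k^*|\le\eta_n$, so they can be off by $s\bar P(B_k)$. The factor $\eta_n^{K-1}$ is only available conditionally on $\bar P$, with windows centred at tail-dependent targets.

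Likewise, $(\eta_n/w_{\tau^{-1}(k)})^{D}$ degenerates when the tail carries most of $w_k^*$. In your bad patterns, an atom in $B_0$ gains one factor $\eta_n$ but loses its $D$ location factors. In both cases the missing $\eta_n^{D}$ must come from \emph{first-moment} control of the tail inside the relevant ball. Second-moment small-ball bounds give only $\eta_n^{D/2}$ there.

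The paper's proof is organized around exactly this and needs no DP small-ball estimate:
\begin{itemize}
\item by tail-freeness, $(\tilde{p}_0,\dots,\tilde{p}_K)\sim\text{Dirichlet}(\alpha_0,\dots,\alpha_K)$, independent of the local DPs;
\item the local mean $\tilde{F}_k$ has a bounded density, giving $(r_n/(R_K\tilde{p}_k))^{D}$;
\item the $R_K^{1-\alpha}$ from the Dirichlet integrals cancels the $R_K^{\alpha-1}$ in the generalized Dirichlet density of $w_{1:K}$;
\item the split into $\mathcal{K}_1,\mathcal{K}_{21},\mathcal{K}_{22}$ handles small $p_k$.
\end{itemize}
The only loss is $r_n^{-\alpha_1}$. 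It is beaten by the gain $(r_n/(w_1^*-p_1))^{D/2}$ after shrinking $B_1$ so that $\alpha_1=\alpha P_0(B_1)$ is small, which is the analogue of your $\delta'\alpha<D/2$.
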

The proof of Proposition \ref{prop:prior_bound} requires multiple steps, which are given in the next subsection.

\subsubsection{Proof of Proposition \ref{prop:prior_bound}}
We first state in our context a known result from \cite{gassiat2014local}.
\begin{lemma}\label{lm:lower_bound_consistency}
Let $P^*$ as in $(A2)$. Then under $(B1)-(B3)$ there exists $\epsilon^* = \epsilon^*(Q) > 0$ such that, for every $\epsilon^* > \epsilon > 0$, there exists $m = m(Q, \epsilon) > 0$ such that for every $P = \sum_{j \geq 1}w_j \delta_{\theta_j}$ it holds that
\[
\begin{aligned}
||Pf - P^*f||_1 \geq m&\biggl\{ \sum_{j \, :\, \theta_j \in B_0} w_j + \sum_{k = 1}^K\left\lvert\sum_{j \, :\, \theta_j \in B_k} w_j - w_k ^* \right\rvert \\
&+ \sum_{k = 1}^K\left\lvert\left\lvert \sum_{j \, :\, \theta_j \in B_k} w_j(\theta_j-\theta_k^*) \right\rvert\right\rvert + \frac{1}{2}\sum_{k = 1}^K \sum_{j \, :\, \theta_j \in B_k} w_j\left\lvert\left\lvert\theta_j-\theta_k^* \right\rvert\right\rvert^2 \biggr\},
\end{aligned}
\]
where $B_k = B_\epsilon(\theta_k^*)$ is the ball of radius $\epsilon$ and center $\theta_k^*$ and $B_0 = \left(\cup_{k = 1}^KB_k\right)^c$.
\end{lemma}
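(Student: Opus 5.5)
The plan is to obtain the statement as a specialization of Theorem 3.10 in \cite{gassiat2014local}, which gives a local lower bound on the $L^1$ distance between location mixtures in terms of the moments of the mixing measure localized around the true atoms. I would combine it with a compactness argument to pass from the local to the global statement.

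\textbf{Step 1: local regime.} Fix $\epsilon$ small enough that the balls $B_k=B_\epsilon(\theta_k^*)$ are pairwise disjoint; this defines $\epsilon^*$ as, say, half the minimal separation of the $\theta_k^*$. For any discrete $P$, group the atoms according to which ball $B_k$ they fall in, and call $\mathcal{D}(P)$ the quantity in braces. Write $Pf-P^*f$ as
\[
\sum_{\theta_j\in B_0}w_jf(\cdot-\theta_j)+\sum_{k}\Bigl(\sum_{\theta_j\in B_k}w_j-w_k^*\Bigr)f(\cdot-\theta_k^*)-\sum_k\nabla f(\cdot-\theta_k^*)^T\sum_{\theta_j\in B_k}w_j(\theta_j-\theta_k^*)+\tfrac12\sum_k\sum_{\theta_j\in B_k}w_j(\theta_j-\theta_k^*)^T\nabla^2f(\cdot-\theta_k^*)(\theta_j-\theta_k^*)+\text{remainder}.
\]
This is a second-order Taylor expansion of $f(\cdot-\theta_j)$ around $\theta_k^*$. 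The remainder is bounded in $L^1$ by $\epsilon\sum_k\sum_{B_k}w_j\|\theta_j-\theta_k^*\|^2$, using integrability of $\nabla^3 f$ from (B3).

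Theorem 3.10 of \cite{gassiat2014local} covers exactly this situation. It rests on the linear independence of $\{f(\cdot-\theta_k^*),\partial_d f(\cdot-\theta_k^*),\partial^2_{dd'}f(\cdot-\theta_k^*)\}$, together with the fact that nonnegative combinations of $f(\cdot-\theta)$, $\theta\in B_0$, cannot cancel these terms. Applying it, I would obtain $\|Pf-P^*f\|_1\ge m\,\mathcal{D}(P)$ whenever $\mathcal{D}(P)$ is small. Conditions (B1)–(B3) supply the required smoothness and integrability: $C^3$, integrable derivatives, and dominated differentiation. Working with weighted sums over possibly infinitely many atoms poses no extra problem, because the bound is linear in $P$ through integrals against $P$ restricted to each $B_k$ and $B_0$.

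\textbf{Step 2: global regime.} Suppose $\mathcal{D}(P)\ge \eta$ for a fixed $\eta>0$. Note that $\mathcal{D}(P)$ is bounded above uniformly, since $\Theta$ is compact (A2) and the weights sum to one. So it suffices to show $\inf\{\|Pf-P^*f\|_1:\mathcal{D}(P)\ge\eta\}>0$.

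Argue by contradiction. Take a sequence $P_n$ with $\|P_nf-P^*f\|_1\to0$ and $\mathcal{D}(P_n)\ge\eta$. By weak compactness of $\mathcal{P}(\Theta)$, pass to a weakly convergent subsequence $P_n\to\bar P$. Continuity of $f$ gives $\bar Pf=P^*f$, and identifiability of finite location mixtures (Fourier transform of $f$ nonvanishing on an open set) gives $\bar P=P^*$. Weak convergence to $P^*$ then forces $\mathcal{D}(P_n)\to0$, a contradiction.

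Two points need care in this argument. First, the mass near the ball boundaries: this is handled by choosing radii whose boundaries carry no $P^*$-mass, which holds automatically since $P^*$ is atomic at the centers. Second, the identifiability step. Theorem 3.10 of \cite{gassiat2014local} is stated so as to cover exactly this, so I would invoke it directly for both regimes.

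\textbf{Main obstacle.} The delicate point is the local step. Many atoms may cluster inside one $B_k$ on different scales, so first-moment cancellations $\sum_{B_k}w_j(\theta_j-\theta_k^*)\approx0$ must be compensated by the second-moment term. This is precisely the strong-identifiability content of \cite{gassiat2014local}, which uses positivity of the weights. I would therefore verify its hypotheses from (B1)–(B3) rather than reprove it.
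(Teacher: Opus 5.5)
Your proposal is correct and follows the same route as the paper: the paper's proof only invokes Theorem 3.10 of \cite{gassiat2014local} after checking that its conditions follow from (B1)--(B3), and your local/global sketch is an outline of that theorem's internal argument. The one ingredient the paper makes explicit and you leave implicit is Lemma 3.8 of the same reference, which guarantees the bound for every small radius $0<\epsilon<\epsilon^*$; your Step 1 absorption of the Taylor remainder would justify only small $\epsilon$, not every radius below half the separation of the atoms.
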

\begin{proof}
This follows by Theorem $3.10$ in \cite{gassiat2014local}, whose conditions $(1)-(2)$ are easily shown to be satisfied by $(B1)-(B3)$. The fact that $\epsilon$ can be chosen arbitrarily small comes from Lemma $3.8$ therein.
\end{proof}
Fix $\delta > 0$ and consider $\epsilon > 0$ such that $\epsilon^* > \epsilon > 0$ with $\epsilon^*$ as in Lemma \ref{lm:lower_bound_consistency}. For every $\epsilon^*$ we denote $B_k$ as in Lemma \ref{lm:lower_bound_consistency} and $\alpha_k = \alpha(B_k)$, with $k = 0, \dots, K$. We will fix $\epsilon$ later in the proof: at the moment we choose it small enough so that $\alpha_k < 1$ for $k = 1, \dots, K$. 

Let $P \overset{\d}{=} \sum_{j \geq 1}w_j\delta_{\theta_j}$ be as in \eqref{GEM}. We introduce some notation that will be heavily used in the following. 
\begin{equation}\label{eq:notation1}
P_k = \sum_{j = 1}^Kw_j\delta_{\theta_j} \mathbbm{1}_{\left\{\theta_j \in B_k\right\}}, \quad p_k = P_k(\R^D), \quad R_K = \sum_{j > K}w_j
\end{equation}
be quantities related to the first $K$ terms of the mixtures, and let
\begin{equation}\label{eq:notation2}
\tP_k = \sum_{j \geq 1}\tw_j\delta_{\ttheta_j} \mathbbm{1}_{\left\{\ttheta_j \in B_k\right\}}, \quad \tp_k = \tP_k(\R^D), \quad \tQ_k = \frac{\tP_k}{\tp_k}
\end{equation}
be quantities related to the remaining terms, where
\[
\tw_k = \frac{w_{K+k}}{R_K}, \quad \ttheta_k = \theta_{K+k}.
\]
Moreover, define the following functionals of $P_k$ and $\tP_k$
\begin{equation}\label{eq:notation3}
F_k = \int (\theta-\theta_k^*)P_k(\d \theta), \quad \tF_k = \int (\theta-\theta_k^*)\tQ_k(\d \theta).
\end{equation}
We list some basic properties of the above quantities in the next two lemmas.
\begin{lemma}\label{lm:basic_properties1}
Consider the expressions in \eqref{eq:notation1}, \eqref{eq:notation2} and \eqref{eq:notation3}. Then it holds that
\begin{equation}\label{eq:basic_equality1}
\sum_{j \, :\, \theta_j \in B_0} w_j = p_0+R_K\tp_0, \quad \left\lvert\sum_{j \, :\, \theta_j \in B_k} w_j - w_k ^* \right\rvert = \left\lvert p_k + R_K\tp_k - w_k ^* \right\rvert 
\end{equation}
and
\begin{equation}\label{eq:basic_equality2}
\sum_{j \, :\, \theta_j \in B_k} w_j(\theta_j-\theta_k^*)  = F_k+R_K\tp_k \tF_k, 
\end{equation}
and
\begin{equation}\label{eq:basic_equality3}
    \sum_{j \, :\, \theta_j \in B_k} w_j\left\lvert\left\lvert\theta_j-\theta_k^* \right\rvert\right\rvert^2 = \int \lnorm \theta-\theta_k^* \rnorm^2P_k(\d \theta) + R_K\tp_k\int \lnorm \theta-\theta_k^* \rnorm^2\tQ_k(\d \theta),
\end{equation}
for every $k = 1, \dots, K$. Moreover
\begin{enumerate}
    \item $P_k$ and $\tP_{k'}$ are independent, for every $k,k' = 0, \dots, K$.

    \item $\tp_k$ and $\tQ_k$ are independent for every $k = 1, \dots, K$.

    \item $\tQ_k \sim \text{DP}\left(\alpha P_0(\cdot \cap B_k)/P_0(B_k) \right)$ for $k = 0, \dots, K$ and
    \[
    \left(\tp_0, \tp_1, \dots, \tp_K \right) \sim \text{Dirichlet}(\alpha_0, \alpha_1, \dots, \alpha_K)
    \]
\end{enumerate}
\end{lemma}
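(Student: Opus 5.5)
The equalities \eqref{eq:basic_equality1}--\eqref{eq:basic_equality3} are bookkeeping. I would split every sum over $j$ with $\theta_j\in B_k$ into the indices $j\le K$ and the indices $j>K$. For the first group, the definitions \eqref{eq:notation1} and \eqref{eq:notation3} give exactly $P_k(\R^D)=p_k$, then $F_k$, then $\int\|\theta-\theta_k^*\|^2P_k(\d\theta)$. For the second group I would write $w_j = R_K\tw_{j-K}$ and $\theta_j=\ttheta_{j-K}$. This turns the tail sum into $R_K\tP_k(\cdot)$. The identity $\tP_k = \tp_k\tQ_k$ then gives $R_K\tp_k\tF_k$ and $R_K\tp_k\int\|\theta-\theta_k^*\|^2\tQ_k(\d\theta)$. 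The case $k=0$ is identical with $B_0$ in place of $B_k$.

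For point 1, I would use the stick-breaking representation \eqref{GEM}. Each $P_k$ is a measurable function of $(v_{1:K},\theta_{1:K})$. For $j\ge1$ we have $\tw_j = v_{K+j}\prod_{i=K+1}^{K+j-1}(1-v_i)$, because the factor $\prod_{i\le K}(1-v_i)=R_K$ cancels. Hence every $\tP_{k'}$ is a measurable function of $(v_{K+1},v_{K+2},\dots,\theta_{K+1},\theta_{K+2},\dots)$. These two blocks are independent, since all $v$'s and $\theta$'s are mutually independent. This is precisely the independence noted in \eqref{eq:independence_normalization}. Moreover, $\tilde P:=\sum_j\tw_j\delta_{\ttheta_j}$ has the same stick-breaking law as $P$, so $\tilde P\sim\text{DP}(\alpha,P_0)$.

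Points 2 and 3 would then follow from the classical self-similarity of the Dirichlet process applied to $\tilde P$. The sets $B_0,\dots,B_K$ form a measurable partition of $\R^D$. For $\epsilon<\epsilon^*$ small the balls are disjoint, because the $\theta_k^*$ are distinct, and I would impose this in the choice of $\epsilon$. The finite-dimensional definition of the DP (Ferguson) then gives $(\tp_0,\dots,\tp_K)=(\tilde P(B_0),\dots,\tilde P(B_K))\sim\text{Dirichlet}(\alpha_0,\dots,\alpha_K)$, with $\alpha_k=\alpha P_0(B_k)$. For the restrictions I would invoke the well-known property, proved e.g.\ via the gamma-process representation, that the normalized restrictions $\tilde P(\cdot\cap B_k)/\tilde P(B_k)$, $k=0,\dots,K$, are mutually independent, independent of the vector $(\tilde P(B_0),\dots,\tilde P(B_K))$, and distributed as $\text{DP}(\alpha P_0(\cdot\cap B_k)/P_0(B_k))$.

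Concretely, I would write $\tilde P=\mu/\mu(\R^D)$, where $\mu$ is a gamma completely random measure with base $\alpha P_0$. The restrictions $\mu(\cdot\cap B_k)$ are independent gamma CRMs. Each factorizes into its total mass $\mu(B_k)$ times a normalized part that is an independent DP. Lukacs' characterization of the gamma distribution then shows that $(\mu(B_k)/\mu(\R^D))_k$ is independent of the normalized parts. This yields both point 2 and point 3, since $\tQ_k=\tP_k/\tp_k$ is exactly that normalized restriction.

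The only delicate step is this last one, namely justifying the joint independence rather than only the marginals. I expect to settle it by citing the standard result in \cite[Chapter 4]{Subhashis2017} (the "self-similarity" of the DP) rather than reproving it. I also need $P_0(B_k)>0$, which holds by (A3), so that the $\alpha_k$ are positive and the normalizations are well defined.
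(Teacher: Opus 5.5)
Your proposal is correct and follows the paper's proof. The identities come from splitting the sums at $j=K$ and using $w_j=R_K\tw_{j-K}$. Point 1 comes from the independence of $(v_{1:K},\theta_{1:K})$ and $(v_j,\theta_j)_{j>K}$ together with $\tilde P\sim\text{DP}(\alpha P_0)$. Points 2--3 come from the tail-free (self-similarity) property of the Dirichlet process, which the paper simply cites and your gamma-CRM sketch unpacks.

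One remark on point 3. Carrying out your gamma computation shows that $\tQ_k$ is a Dirichlet process with base measure $\alpha P_0(\cdot\cap B_k)$, whose total mass is $\alpha_k=\alpha P_0(B_k)$. The statement's $\alpha P_0(\cdot\cap B_k)/P_0(B_k)$ would give total mass $\alpha$ instead. So you should state that parameter rather than copy the statement's, which is also what matches the $\text{Dirichlet}(\alpha_0,\dots,\alpha_K)$ law of the masses.
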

\begin{proof}
The equalities in \eqref{eq:basic_equality1}, \eqref{eq:basic_equality2} and \eqref{eq:basic_equality3} follow immediately from the definitions in \eqref{eq:notation1}, \eqref{eq:notation2} and \eqref{eq:notation3}.

As regards point $1$, notice that
\[
w_j = v_j\prod_{i = 1}^{j-1}(1-v_i), \quad  \tw_j = v_{K + j}\prod_{i = K+1}^{K+j-1}(1-v_i).
\]
for every $j \geq 1$. Since $\{v_j\}_j$ is a sequence of independent random variables, the independence of $P_k$ and $\tP_{k'}$ follows. For the same reason, by \eqref{GEM} we deduce that
\[
\tP = \sum_{j \geq 1} \tw_j\delta_{\ttheta_j} \sim \text{DP}(\alpha P_0).
\]
Then points 2 and 3 follow from the fact that the Dirichlet process is tail-free (see e.g.\ Remark $3.2.1$ of \cite{ghosh2003bayesian}).
\end{proof}
\begin{lemma}\label{lm:basic_properties2}
    Assume that
    \[
    \int \lnorm \theta-\theta_k^* \rnorm^2P_k(\d \theta)  \leq r_n \quad \text{and} \quad R_K\tp_k\int \lnorm \theta-\theta_k^* \rnorm^2\tQ_k(\d \theta)  \leq r_n
    \]
    for $k = 1, \dots, K$. Then we have that
    \[
    \lnorm F_k \rnorm \leq \sqrt{p_kr_n} \quad \text{and} \quad R_K\tp_k\lnorm \tF_k \rnorm \leq \sqrt{R_K \tp_k r_n}.
    \]
\end{lemma}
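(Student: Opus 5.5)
This is a direct application of the Cauchy--Schwarz inequality, once to the finite measure $P_k$ and once to the probability measure $\tQ_k$.

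For the first bound, note that $P_k$ is a finite measure on $B_k$ with total mass $p_k = P_k(\R^D)$. Write $F_k = \int (\theta-\theta_k^*)P_k(\d\theta)$. By Jensen's (or Cauchy--Schwarz) inequality applied to the vector-valued integrand,
\[
\lnorm F_k \rnorm \leq \int \lnorm \theta-\theta_k^*\rnorm P_k(\d\theta) \leq \left(\int 1 \, P_k(\d\theta)\right)^{1/2}\left(\int \lnorm\theta-\theta_k^*\rnorm^2P_k(\d\theta)\right)^{1/2} \leq \sqrt{p_k r_n},
\]
where the last step uses the first assumption.

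For the second bound, $\tQ_k = \tP_k/\tp_k$ is a probability measure whenever $\tp_k>0$. If $\tp_k = 0$, the convention is that the left-hand side vanishes, since $R_K\tp_k\tF_k = \int(\theta-\theta_k^*)\tP_k(\d\theta)/1 \cdot R_K$ is an integral against the zero measure. Otherwise the same inequality gives
\[
\lnorm \tF_k\rnorm \leq \left(\int \lnorm\theta-\theta_k^*\rnorm^2\tQ_k(\d\theta)\right)^{1/2}.
\]
Multiplying by $R_K\tp_k \geq 0$ and writing $R_K\tp_k = \sqrt{R_K\tp_k}\,\sqrt{R_K\tp_k}$ yields
\[
R_K\tp_k\lnorm\tF_k\rnorm \leq \sqrt{R_K\tp_k}\left(R_K\tp_k\int\lnorm\theta-\theta_k^*\rnorm^2\tQ_k(\d\theta)\right)^{1/2} \leq \sqrt{R_K\tp_k r_n},
\]
by the second assumption.

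There is no real obstacle here. The only point needing care is the degenerate case $\tp_k=0$ (or $R_K=0$), where $\tQ_k$ is undefined but the product $R_K\tp_k\tF_k$ is naturally zero and the bound holds trivially.
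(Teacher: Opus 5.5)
Your proposal is correct and is essentially the paper's argument. The paper applies Jensen's inequality to the normalized measure $P_k/p_k$ and to the probability measure $\tQ_k$, which is your Cauchy--Schwarz step, and then multiplies by $R_K\tp_k$ exactly as you do; your extra remark on the degenerate case $\tp_k=0$ is harmless, and that case is in any event a null event under the Dirichlet law.
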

\begin{proof}
By definition of $F_k$ and $\tF_k$, by Jensen's inequality we have that
\[
\left(\frac{\lnorm F_k \rnorm}{p_k}\right)^2 \leq \frac{\int \lnorm \theta-\theta_k^* \rnorm^2P_k(\d \theta)}{p_k}
\]
and
\[
\lnorm \tF_k \rnorm^2 \leq \int \lnorm \theta-\theta_k^* \rnorm^2\tQ_k(\d \theta),
\]
from which the results immediately follow.
\end{proof}
Thus, setting $r_n = m^{-1}(\log n)^q/\sqrt{n}$, by Lemma \ref{lm:lower_bound_consistency} combined with Lemmas \ref{lm:basic_properties1} and \ref{lm:basic_properties2}, in order to prove the result it suffices to show that
\begin{equation}\label{eq:new_statement}
\Pi\left(B_n \right) < n^{-DK/2 - (K-1)/2 -\alpha/2 -\beta},
\end{equation}
where
\begin{equation}\label{eq:trans_bound1}
\begin{aligned}
    B_n = & \left\{R_K > \frac{1}{n^{1/2-\delta}} \right\} \cap \left\{p_0 \leq r_n \right\}\cap \left\{R_K\tp_0 \leq r_n \right\}\\
    &\quad \cap \bigcap_{k = 1}^K \biggl\{\left\{\left\lvert p_k + R_K\tp_k - w_k ^* \right\rvert  \leq r_n\right \} \cap \left\{\lnorm F_k \rnorm \leq \sqrt{p_kr_n}\right \} \\
    & \qquad \qquad  \cap  \left\{\lnorm F_k+R_K\tp_k \tF_k \rnorm  \leq r_n\right \} \cap \left\{R_K\tp_k\lnorm \tF_k \rnorm \leq \sqrt{R_K \tp_k r_n}\right \} \biggr\}.
\end{aligned}
\end{equation}

The rest of the proof is divided in several steps.

\paragraph{First step.} Fix $R_K$, $P_k$, $F_k$ and $\tp_k$, with $k = 1, \dots, K$. We want to upper bound the probability of the event
\begin{equation}\label{eq:to_bound1}
\bigcap_{k = 1}^K \left\{\left\{\lnorm F_k+R_K\tp_k \tF_k \rnorm  \leq r_n\right \} \cap \left\{R_K\tp_k\lnorm \tF_k \rnorm \leq \sqrt{R_K \tp_k r_n}\right \} \right\},
\end{equation}
where $\tF_1, \dots, \tF_K$ are the only random objects.

First of all, notice that if $\lnorm F_k \rnorm > \sqrt{R_K\tp_kr_n} + r_n$ then we have that
\[
\lnorm F_k+R_K\tp_k \tF_k \rnorm \geq \lnorm F_k \rnorm - R_K\tp_k\lnorm  \tF_k \rnorm > r_n.
\]
Therefore we need $\lnorm F_k \rnorm \leq \sqrt{R_K\tp_kr_n} + r_n$.  Moreover we have that
\[
\Pi\left(\lnorm F_k+R_K\tp_k \tF_k \rnorm  \leq r_n \right) = \Pi\left(\tF_k \in B_{\frac{r_n}{R_K\tp_k}}\left(\frac{F_k}{R_K\tp_k} \right) \right)
\]
and since, by Lemma $4.8$ in \cite{hairault2022evidence}, $\tF_k$ admits a bounded density with respect to the Lebesgue measure on $\R^D$, we have that
\[
\Pi\left(\lnorm F_k+R_K\tp_k \tF_k \rnorm  \leq r_n \right) \leq 
\begin{cases}
    1 \quad \text{if } R_K\tp_k \lesssim r_n\\
    \\
    B\left(\frac{r_n}{R_K\tp_k} \right)^D\quad \text{else}
\end{cases}
\]
for some fixed constant $B$. By definition of $B_n$ we need that $\left\lvert p_k + R_K\tp_k - w_k ^* \right\rvert  \leq r_n$, which implies that
\[
\left\lvert p_k - w_k ^* \right\rvert  \leq R_K\tp_k + r_n.
\]
Therefore we conclude that
\begin{equation}\label{eq:bound_atoms}
\Pi\left(\lnorm F_k+R_K\tp_k \tF_k \rnorm  \leq r_n \right) \leq 
\begin{cases}
    1 \quad \text{if } \left\lvert p_k - w_k ^* \right\rvert  \lesssim r_n\\
    \\
    B\left(\frac{r_n}{R_K\tp_k} \right)^D\quad \text{else}
\end{cases}
\end{equation}
Define the sets
\[
\matK_1 = \left\{k \in \{1, \dots, K\} \, \mid \, \left\lvert p_k - w_k ^* \right\rvert  \leq 2r_n \right\}
\]
and
\[
\matK_2 = \left\{k \in \{1, \dots, K\} \, \mid \, \left\lvert p_k - w_k ^* \right\rvert  > 2r_n \right\},
\]
with respective cardinalities $K_1$ and $K_2$: notice that, since $R_K > n^{-1/2+\delta}$ in $B_n$, we deduce that $\matK_2$ is non-empty. Moreover, if $k \in \matK_1$ we immediately get that
\[
\lnorm F_k \rnorm \leq 2Cr_n \quad \text{and} \quad R_K\tp_k \leq 3r_n, 
\]
where $C$ is the diameter of $\Theta$, which is finite by $(A2)$. If instead $k \in \matK_2$ it follows that $w_k^*-p_k > 2r_n$ and therefore $(w_k^*-p_k)/2 \leq R_K\tp_k \leq 2(w_k^*-p_k)$. Combining this with $\lnorm F_k \rnorm \leq \sqrt{R_K\tp_kr_n} + r_n$, we deduce that
\[
\lnorm F_k \rnorm \leq 2\sqrt{(w_k^*-p_k)r_n} \quad \text{and} \quad \tp_k \geq \frac{w_k^*-p_k}{2R_K}
\]
and therefore
\[
\prod_{k \in \matK_2}\tp_k^{-1} \leq \left(2R_K\right)^{K_2D}\prod_{k \in \matK_2}\left(w_k^*-p_k \right)^{-D}
\]
for every $k \in \matK_2$.
Combining the above with \eqref{eq:bound_atoms} we conclude that the probability of the event in \eqref{eq:to_bound1} can be upper bounded by
\begin{equation}\label{eq:final_bound1}
\left(2B\right)^{K_2D}r_n^{K_2D}\left(\prod_{k \in \matK_2}\left(w_k^*-p_k \right)^{-D}\mathbbm{1}_{\left\{ \lnorm F_k \rnorm \leq 2\sqrt{(w_k^*-p_k)r_n}\right\}}\right)\left(\prod_{k \in \matK_1}\mathbbm{1}_{\left\{ \lnorm F_k \rnorm \leq (2C+1)r_n\right\}}\right).
\end{equation}
Notice that \eqref{eq:final_bound1} does not depend on $\tp_0, \dots, \tp_K$.

\paragraph{Second step.} Conditional on $R_K$ and $p_k$, with $k = 0, \dots, K$, we want to upper bound the probability of the event
\begin{equation}\label{eq:to_bound2}
\left\{R_K\tp_0 \leq r_n \right\}\cap \bigcap_{k = 1}^K \left\{\left\lvert p_k + R_K\tp_k - w_k ^* \right\rvert  \leq r_n\right \},
\end{equation}
where $(\tp_0, \dots, \tp_K)$ is a random vector distributed as Dirichlet$(\alpha_0, \dots, \alpha_K)$ from point $3$ of Lemma \ref{lm:basic_properties1}. Given $\matK_1$ and $\matK_2$ as in the previous point, we need to upper bound
\[
\int_{\Delta_K} \left(t_0\mathbbm{1}_{\left\{ R_Kt_0 \leq r_n\right\}} \right)^{\alpha_0-1}\left(\prod_{k \in \matK_1}t_k^{\alpha_k-1}\mathbbm{1}_{\left\{ R_Kt_k \leq 3r_n\right\}}\right)\left(\prod_{k \in \matK_2}t_k^{\alpha_k-1}\mathbbm{1}_{\left\{ \left\lvert R_Kt_k+p_k - w_k ^* \right\rvert  \leq r_n\right\}}\right)\, \d \underline{t},
\]
where $\underline{t} = (t_1, \dots, t_K)$, $t_0 = 1-\sum_{k=1}^Kt_k$ and $\Delta_K$ is the $K$-dimensional simplex. Assume without loss of generality that $K \in \matK_2$ and, since $\alpha_K < 1$, we have that
\[
t_K^{\alpha_K-1} \leq \left( \frac{w_K^*-p_K-r_n}{R_K}\right)^{\alpha_K-1}\leq 2R_K\frac{(w_K^*-p_K)^{\alpha_K-1}}{R_K^{\alpha_K}},
\]
since $w_K^*-p_K > 2r_n$. Consider the change of variables
\[
s_0 = \sum_{k=1}^Kt_k, \quad s_k = t_k,
\]
with $k = 1, \dots, K-1$. Since the Jacobian of this transformation has unit determinant, the integral above is upper bounded by
\begin{equation}\label{eq:split_integral1}
\begin{aligned}
2R_K\frac{(w_K^*-p_K)^{\alpha_K-1}}{R_K^{\alpha_K}}\int_{\R^K} \bigl((1-s_0)^{\alpha_0-1}&\mathbbm{1}_{\left\{ 0 \leq R_K(1-s_0) \leq r_n\right\}} \bigr)\left(\prod_{k \in \matK_1}s_k^{\alpha_k-1}\mathbbm{1}_{\left\{0\leq R_Ks_k \leq 3r_n\right\}}\right)\\
&\times \left(\prod_{k \in \matK_2\backslash K}s_k^{\alpha_k-1}\mathbbm{1}_{\left\{0\leq \left\lvert R_Ks_k+p_k - w_k ^* \right\rvert  \leq r_n\right\}}\right)\, \d \underline{s},
\end{aligned}
\end{equation}
which can be factorized as
\[
\left(\int_{1-r_n/R_K}^1(1-s_0)^{\alpha_0-1}\, \d s_0 \right)\left(\prod_{k \in \matK_1}\int_0^{3r_n/R_K}s_k^{\alpha_k-1} \, \d s_k\right)\left(\prod_{k \in \matK_2\backslash K}\int_{(w_k^*-p_k-r_n)/R_K}^{(w_k^*-p_k+r_n)/R_K}s_k^{\alpha_k-1} \, \d s_k\right).
\]
With easy calculations we obtain that
\begin{equation}\label{eq:split_integral2}
\int_{1-r_n/R_K}^1(1-s_0)^{\alpha_0-1}\, \d s_0 = \frac{1}{\alpha_0}\left(\frac{r_n}{R_K} \right)^{\alpha_0}
\end{equation}
and
\begin{equation}\label{eq:split_integral3}
\begin{aligned}
\int_{(w_k^*-p_k-r_n)/R_K}^{(w_k^*-p_k+r_n)/R_K}s_k^{\alpha_k-1} \, \d s_k &= R_K^{-\alpha_k}\int_{w_k^*-p_k-r_n}^{w_k^*-p_k+r_n}x_k^{\alpha_k-1} \, \d x_k =  \left(\frac{w_k^*-p_k}{R_K} \right)^{\alpha_k}\int_{1-r_n/(w_k^*-p_k)}^{1+r_n/(w_k^*-p_k)}y_k^{\alpha_k-1} \, \d y_k\\
&\leq 2\left(\frac{w_k^*-p_k}{R_K} \right)^{\alpha_k}\left( 1-\frac{r_n}{w_k^*-p_k}\right)^{\alpha_k-1}\frac{r_n}{w_k^*-p_k} \leq \frac{(w_k^*-p_k)^{\alpha_k-1}}{R_K^{\alpha_k}}r_n,
\end{aligned}
\end{equation}
for $k \in \matK_2$ and
\begin{equation}\label{eq:split_integral4}
\int_0^{3r_n/R_K}s_k^{\alpha_k-1} \, \d s_k = \frac{3^{\alpha_k}}{\alpha_k}\left(\frac{r_n}{R_K} \right)^{\alpha_k} = \frac{3^{\alpha_k}}{\alpha_k}\frac{r_n^{\alpha_k-1}}{R_K^{\alpha_k}}r_n \leq 2\frac{3^{\alpha_k}}{\alpha_k}\frac{|w_k^*-p_k|^{\alpha_k-1}}{R_K^{\alpha_k}}r_n,
\end{equation}
for $k \in \matK_1$. Combining \eqref{eq:split_integral1}, \eqref{eq:split_integral2}, \eqref{eq:split_integral3} and \eqref{eq:split_integral4} we can conclude that the probability of the event in \eqref{eq:to_bound2} is upper bounded by
\[
Mr_n^{K+\alpha_0-1}R_K^{1-\alpha}\left(\prod_{k \in \matK_2}(w_k^*-p_k)^{\alpha_k-1}\mathbbm{1}_{\left\{p_k \leq w_k^*-2r_n \right\} }\right)\left(\prod_{k \in \matK_1}|w_k^*-p_k|^{\alpha_k-1}\mathbbm{1}_{\left\{|w_k^*-p_k| \leq 2r_n \right\} } \right).
\]
Define two further subsets of $\matK_2$
\[
\matK_{21} = \left\{k \in \matK_2 \, \mid \, p_k \leq r_n \right\}, \quad \matK_{22} = \left\{k \in \matK_2 \, \mid \, p_k > r_n \right\},
\]
depending on the size of $p_k$, and combining the above with \eqref{eq:trans_bound1} and \eqref{eq:final_bound1} we conclude that
\begin{equation}\label{eq:conditional_bound}
\begin{aligned}
\Pi\left( B_n \mid w_{1:K}, \theta_{1:K}  \right) \leq M'&r_n^{K_2D+K+\alpha_0-1}R_K^{1-\alpha}\mathbbm{1}_{\left\{R_K > \frac{1}{n^{1/2-\delta}} \right\}}\mathbbm{1}_{\left\{p_0 \leq r_n \right\}}\\
&\times\left(\prod_{k \in \matK_{22}}(w_k^*-p_k)^{\alpha_k-D-1} \mathbbm{1}_{\left\{r_n \leq p_k \leq w_k^*-2r_n \right\}}\mathbbm{1}_{\left\{ \lnorm F_k \rnorm \leq 2\sqrt{\min\{p_k,w_k^*-p_k\}r_n}\right\}}\right)\\
&\times\left(\prod_{k \in \matK_{21}}(w_k^*-p_k)^{\alpha_k-1} \mathbbm{1}_{\left\{p_k \leq r_n \right\}}\right)\\
&\times\left(\prod_{k \in \matK_1}|w_k^*-p_k|^{\alpha_k-1} \mathbbm{1}_{\left\{|w_k^*-p_k| \leq 2r_n \right\} }\mathbbm{1}_{\left\{ \lnorm F_k \rnorm \leq (2C+1)r_n\right\}}\right),
\end{aligned}
\end{equation}
for another absolute constant $M'$. We are now left with marginalizing \eqref{eq:conditional_bound} with respect to $w_{1:K}$ and $\theta_{1:K}$.

\paragraph{Third step.} We first need a preliminary lemma.
\begin{lemma}\label{lm:convolution}
Let $\theta_i \simiid G$, where $i = 1, \dots, I$ and $G$ is probability distribution on $\R^D$ with bounded density $g$ with respect to the Lebesgue measure. Let $g_{\bm{w}}$ be the density of $\sum_{i = 1}^Iw_i\theta_i$, with $\bm{w} \in \Delta_{I-1}$. Then
\[
\sup_{\bm{w} \in \Delta_{I-1}} \, \sup_{\theta \in \R^D} \, g_{\bm{w}}(\theta) < \infty.
\]
\end{lemma}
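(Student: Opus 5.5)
The plan is to isolate one summand whose weight is bounded away from zero uniformly over the simplex, and let that summand's density do all the work. The key observation is that convolving a bounded density with an arbitrary probability measure does not increase its supremum, so the remaining summands can be treated as an independent perturbation.

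Fix $\bm{w} \in \Delta_{I-1}$ and pick an index $i^* = i^*(\bm{w})$ with $w_{i^*} = \max_i w_i$. Since $\sum_i w_i = 1$, we have $w_{i^*} \geq 1/I$.

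Write
\[
\sum_{i=1}^I w_i\theta_i = w_{i^*}\theta_{i^*} + Y, \qquad Y = \sum_{i \neq i^*} w_i\theta_i .
\]
The random vector $Y$ is independent of $\theta_{i^*}$, but its law $\mu_Y$ need not have a density; for example some $w_i$ may vanish or $I=1$. This causes no difficulty. By the change of variables $\theta \mapsto w_{i^*}\theta$, the variable $w_{i^*}\theta_{i^*}$ has Lebesgue density
\[
h(\theta) = w_{i^*}^{-D}\, g\!\left(\theta/w_{i^*}\right),
\]
which satisfies
\[
\sup_\theta h(\theta) \leq w_{i^*}^{-D}\, \lVert g \rVert_\infty \leq I^D \lVert g \rVert_\infty .
\]

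By independence, the law of the sum is the convolution $h * \mu_Y$. This is absolutely continuous with density
\[
g_{\bm{w}}(\theta) = \int h(\theta - y)\, \mu_Y(\d y).
\]
Bounding the integrand by its supremum and using that $\mu_Y$ is a probability measure gives
\[
g_{\bm{w}}(\theta) \leq \sup h \leq I^D \lVert g \rVert_\infty
\]
for every $\theta \in \R^D$. The bound is independent of $\bm{w}$ and $\theta$, so taking suprema yields the claim with constant $I^D\lVert g\rVert_\infty$.

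There is essentially no hard step. The only point requiring care is that one should not try to use the joint density of all the $\theta_i$, since that degenerates when some weights are zero. Instead, one conditions on all but the largest-weight summand, which sidesteps the issue and also shows the density exists for every $\bm{w}$ in the simplex.
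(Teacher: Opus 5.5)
Your proof is correct and is essentially the paper's argument: take a summand with weight at least $1/I$, bound its rescaled density by $I^D\lVert g\rVert_\infty$, and note that convolving with the independent remainder cannot increase the supremum. Writing the convolution against the law $\mu_Y$ rather than a density $h_Y$ is a small improvement over the paper, since $h_Y$ need not exist when the other weights vanish or $I=1$.
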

\begin{proof}
Fix $\bm{w} \in \Delta_{I-1}$ and without loss of generality assume that $w_1 \geq 1/I$. Then
\[
\sum_{i = 1}^Iw_i\theta_i = w_1\theta_1+\sum_{i = 2}^Iw_i\theta_i =: X+Y,
\]
where $X$ and $Y$ are independent random vectors with densities $h_X$ and $h_Y$. In particular
\[
h_X(x) = w_1^{-D}g\left(\frac{x}{w_1} \right) \quad \text{and} \quad g_{\bm{w}}(\theta) = \int_{\R^D}h_X(\theta-y)h_Y(y) \, \d y.
\]
Therefore
\[
\sup_{\theta \in \R^D} \,g_{\bm{w}}(\theta) \leq I^D\sup_{\theta \in \R^D} \,g(\theta),
\]
which is finite and independent from $\bm{w}$.
\end{proof}
Let $I_k = \{ i = 1:K \, \mid \, \theta_i \in B_k\}$, with $k = 0, \dots, K$. Then
\[
F_k = \sum_{i \in I_k}w_i(\theta_i-\theta_k^*) \quad \text{and} \quad p_k = \sum_{i \in I_k}w_i.
\]
We can now decompose the indicator functions in \eqref{eq:conditional_bound} according to all the possible choices of $I_0, \dots, I_K$. Since there are finitely many such combinations, it suffices to show the result individually for each of them. Then for the rest of the proof we fix $I_0, \dots, I_K$.

Notice that $F_k/p_k$ satisfies the hypotheses of Lemma \ref{lm:convolution} and therefore
\[
\Pi\left(\lnorm F_k \rnorm \leq (2C+1)r_n \right) \leq Rr_n^D
\]
for $k \in \matK_1$ and
\[
\Pi \left(\lnorm F_k \rnorm \leq 2\sqrt{\min\{p_k,w_k^*-p_k\}r_n} \right) \leq R\left(p_k^{-1}\sqrt{\min\{p_k,w_k^*-p_k\}r_n} \right)^D,
\]
for $k \in \matK_{22}$, where $R$ is a fixed constant. Combining this with \eqref{eq:conditional_bound}, we have that
\begin{equation}\label{eq:conditional_bound2}
\begin{aligned}
\Pi\left( B_n \mid w_{1:K}, I_0, \dots, I_K  \right) \leq R'&r_n^{KD+K+\alpha_0-1}R_K^{1-\alpha}\mathbbm{1}_{\left\{R_K > \frac{1}{n^{1/2-\delta}} \right\}}\mathbbm{1}_{\left\{p_0 \leq r_n \right\}}\\
&\times\left(\prod_{k \in \matK_{22}}(w_k^*-p_k)^{\alpha_k-D-1}\left(p_k^{-1}\sqrt{\min\{p_k,w_k^*-p_k\}r_n} \right)^D \mathbbm{1}_{\left\{r_n \leq p_k \leq w_k^*-2r_n \right\}}\right)\\
&\times\left(\prod_{k \in \matK_{21}}(w_k^*-p_k)^{\alpha_k-1} \mathbbm{1}_{\left\{p_k \leq r_n \right\}}\right)\left(\prod_{k \in \matK_1}|w_k^*-p_k|^{\alpha_k-1} \mathbbm{1}_{\left\{|w_k^*-p_k| \leq 2r_n \right\} }\right),
\end{aligned}
\end{equation}
for a fixed constant $R'$.

\paragraph{Fourth step.} It is known (see e.g.\ \cite{connor1969concepts} and Section $3.1$ in \cite{ishwaran2001gibbs} that $w_{1:K}$ has a generalized Dirichlet distribution with density
\begin{equation}\label{eq:generalized_dirichlet}
h(w_1, \dots, w_k) = \alpha^KR_K^{\alpha-1}\prod_{k = 1}^{K-1}\left(1-\sum_{j=1}^kw_j \right)^{-1}\mathbbm{1}_{\{w_{1:K} \in \Delta_{K-1}\}}.
\end{equation}
If $R_K \geq (\min_k w_k^*)/2$, then $\prod_{k = 1}^{K-1}\left(1-\sum_{j=1}^kw_j \right)^{-1}$ is trivially bounded above. If instead $R_K < (\min_k w_k^*)/2$, then $p_k \geq (\min_k w_k^*)/2 - r_n$ for every $k = 1,\dots, K$. Indeed otherwise
\[
|p_k-w_k^*+R_K\tp_k| \geq w_k^*-p_k-R_K > r_n,
\]
which is incompatible with the definition of $B_n$. This implies that $I_k = \{k'\}$ for every $k = 1, \dots, K$, i.e.\, each atom $\theta_{k'}$ is associated to a unique set $B_k$. In this case we conclude that $w_k \geq (\min_k w_k^*)/4$ and therefore $\prod_{k = 1}^{K-1}\left(1-\sum_{j=1}^kw_j \right)^{-1}$ is bounded. From \eqref{eq:conditional_bound2} then we obtain
\begin{equation}\label{eq:conditional_bound3}
\begin{aligned}
\Pi\left( B_n \mid  I_0, \dots, I_K  \right) \leq T&r_n^{KD+K+\alpha_0-1}\int_{\Delta_{K}}\mathbbm{1}_{\left\{R_K > \frac{1}{n^{1/2-\delta}} \right\}}\mathbbm{1}_{\left\{p_0 \leq r_n \right\}}\\
&\times\left(\prod_{k \in \matK_{22}}(w_k^*-p_k)^{\alpha_k-D-1}\left(p_k^{-1}\sqrt{\min\{p_k,w_k^*-p_k\}r_n} \right)^D \mathbbm{1}_{\left\{r_n \leq p_k \leq w_k^*-2r_n \right\}}\right)\\
&\times\left(\prod_{k \in \matK_{21}}(w_k^*-p_k)^{\alpha_k-1} \mathbbm{1}_{\left\{p_k \leq r_n \right\}}\right)\left(\prod_{k \in \matK_1}|w_k^*-p_k|^{\alpha_k-1} \mathbbm{1}_{\left\{|w_k^*-p_k| \leq 2r_n \right\} }\right) \, \d \underline{p},
\end{aligned}
\end{equation}
for a fixed constant $T$ and with $\underline{p} = (p_0, \dots, p_K)$. Notice that the term $R_K^{1-\alpha}$ in \eqref{eq:conditional_bound2} cancels out thanks to the matching term $R_K^{\alpha-1}$ in \eqref{eq:generalized_dirichlet}.

\paragraph{Fifth step.} Since $R_K > n^{-1/2+\delta}$, there exists $k \in \matK_{22}$ such that $w_k^*-p_k > n^{-\beta}$ with $\beta < 1/2$. Without loss of generality, say $k = 1$. Then we can upper bound \eqref{eq:conditional_bound3} with the following factorization
\[
\begin{aligned}
\Pi\left( B_n \mid  I_0, \dots, I_K  \right) \leq T&r_n^{KD+K+\alpha_0-1}\int_0^1\mathbbm{1}_{\left\{p_0 \leq r_n \right\}} \, \d p_0\\
&\times \int_0^1(w_1^*-p_1)^{\alpha_1-D-1}\left(p_1^{-1}\sqrt{\min\{p_1,w_1^*-p_1\}r_n} \right)^D \mathbbm{1}_{\left\{r_n \leq p_1 \leq w_1^*-n^{-\beta} \right\}}\, \d p_1\\
&\times\prod_{k \in \matK_{22}\backslash 1}\left(\int_0^1(w_k^*-p_k)^{\alpha_k-D-1}\left(p_k^{-1}\sqrt{\min\{p_k,w_k^*-p_k\}r_n} \right)^D \mathbbm{1}_{\left\{r_n \leq p_k \leq w_k^*-2r_n \right\}}\, \d p_k\right)\\
&\times\prod_{k \in \matK_{21}}\left(\int_0^1(w_k^*-p_k)^{\alpha_k-1} \mathbbm{1}_{\left\{p_k \leq r_n \right\}}\, \d p_k\right)\\
&\times\prod_{k \in \matK_1}\left(\int_0^1|w_k^*-p_k|^{\alpha_k-1} \mathbbm{1}_{\left\{|w_k^*-p_k| \leq 2r_n \right\} }\, \d p_k\right).
\end{aligned}
\]
It is immediate to show that if $k \in \matK_{21}$ and $k' \in \matK_{1}$ then
\[
\int_0^1(w_k^*-p_k)^{\alpha_k-1} \mathbbm{1}_{\left\{p_k \leq r_n \right\}}\, \d p_k \leq Wr_n^{\alpha_k}, \quad \int_0^1|w_{k'}^*-p_{k'}|^{\alpha_{k'}-1} \mathbbm{1}_{\left\{|w_{k'}^*-p_{k'}| \leq 2r_n \right\} }\, \d p_{k'} \leq Wr_n^{\alpha_{k'}},
\]
for some universal constant $W$. Instead, if $k \in \matK_{22}\backslash1$ notice that
\[
\begin{aligned}
\frac{\sqrt{\min\{p_k,w_k^*-p_k\}r_n}}{(w_k^*-p_k)p_k}\mathbbm{1}_{\left\{r_n \leq p_k \leq w_k^*-2r_n \right\}} \leq 1,
\end{aligned}
\]
and therefore
\[
\int_0^1(w_k^*-p_k)^{\alpha_k-D-1}\left(p_k^{-1}\sqrt{\min\{p_k,w_k^*-p_k\}r_n} \right)^D \mathbbm{1}_{\left\{r_n \leq p_k \leq w_k^*-2r_n \right\}}\, \d p_k \leq Wr_n^{\alpha_k}.
\]
Thus we are left with
\begin{equation}\label{eq:conditional_bound4}
\begin{aligned}
\Pi\left( B_n \mid  I_0, \dots, I_K  \right) \leq W'&r_n^{KD+K+\alpha-\alpha_1-1}\\
&\times \int_{r_n}^{w_1^*-n^{-\beta}}(w_1^*-p_1)^{\alpha_1-D-1}\left(p_1^{-1}\sqrt{\min\{p_1,w_1^*-p_1\}r_n} \right)^D \, \d p_1,
\end{aligned}
\end{equation}
for an absolute constant $W'$.

\paragraph{Sixth and final step.} We decompose the integral in \eqref{eq:conditional_bound4} in two parts. First of all
\[
\begin{aligned}
\int_{r_n}^{w_1^*/2}(w_1^*-p_1)^{\alpha_1-D-1}\left(p_1^{-1}\sqrt{\min\{p_1,w_1^*-p_1\}r_n} \right)^D \, \d p_1 &\leq Qr_n^{D/2}\int_{r_n}^{w_1^*/2}p_1^{-D/2} \, \d p_1\\
& \leq Q'r_n,
\end{aligned}
\]
for some constants $Q$ and $Q'$. Moreover
\[
\begin{aligned}
\int_{w_1^*/2}^{w_1^*-n^{-\beta}}(w_1^*-p_1)^{\alpha_1-D-1}&\left(p_1^{-1}\sqrt{\min\{p_1,w_1^*-p_1\}r_n} \right)^D \, \d p_1 \leq Qr_n^{D/2}\int_{w_1^*/2}^{w_1^*-n^{-\beta}}(w_1^*-p_1)^{\alpha_1-D/2-1} \, \d p_1\\
&\leq Q'r_n^{D/2}n^{\beta D/2-\alpha_1\beta} \leq Q'r_n^{D/2}n^{\beta D/2}.
\end{aligned}
\]
Combining this with \eqref{eq:conditional_bound4} we conclude that
\[
\Pi\left( B_n \mid  I_0, \dots, I_K  \right) \leq Lr_n^{KD+(K-1)+\alpha-\alpha_1}\,r_n^{D/2}n^{\beta D/2},
\]
for a fixed constant $L$, and finally by definition of $r_n$ we can write
\[
\Pi\left( B_n \mid  I_0, \dots, I_K  \right) \leq L'(\log n)^Fn^{-KD/2-(K-1)/2-(\alpha-\alpha_1)/2}\,n^{-D(1/2-\beta)/2},
\]
for some constants $L'$ and $F$. Since $\beta < 1/2$, choosing $\beta' = 1/2-\beta$, we conclude
\[
\Pi\left( B_n \mid  I_0, \dots, I_K  \right) \leq L'(\log n)^Fn^{-KD/2-(K-1)/2-\alpha/2}\,n^{(\alpha_1-D\beta')/2},
\]
and then \eqref{eq:new_statement} follows by choosing $\epsilon$ small enough so that $\alpha_1 < D\beta'$.

\subsection{Proof of Corollary \ref{crl:convergence_wasserstein}}

\begin{proof}
Fix $\delta \in (0, 1/2)$. By Lemma \ref{lm:post_consistency} and Theorem \ref{thm:stick_post} we have that
\[
\Pi\left(\left\{||Pf - P^*f||_1 < \frac{(\log n)^q}{\sqrt{n}}\right\} \cap \left\{\sum_{k > K}w_k \leq \frac{1}{n^{1/2-\delta/2}} \right\} \mid X_{1:n}\right) \to 1,
\]
as $n \to \infty$ in $Q^{(\infty)}$-probability. Therefore by Lemma \ref{lm:lower_bound_consistency} we can also deduce that
\begin{equation*}
\Pi \left(\exists \text{ $\sigma \in \mathcal{S}_K$ s.t. } \theta_{\sigma(k)} \in B_k \text{ for } k = 1, \dots, K \mid X_{1:n}\right) \to 1,
\end{equation*}
as $n \to \infty$ in $Q^{(\infty)}$-probability, where $B_k = B_\epsilon(\theta_k^*)$ with $\epsilon$ small enough so that $\{B_k\}_k$ is a collection of disjoint sets. Thus the set $\Omega = \Omega_1 \cap \Omega_2 \cap \Omega_3$ defined as
\[
\Omega_1 = \left\{(\w, \btheta) \, \mid \, ||Pf - P^*f||_1 < \frac{(\log n)^q}{\sqrt{n}}\right\}, \quad \Omega_2 = \left\{(\w, \btheta) \, \mid \, \sum_{k > K}w_k \leq \frac{1}{n^{1/2-\delta/2}}\right\}
\]
and
\[
\Omega_3 = \left\{(\w, \btheta) \, \mid \, \exists \text{ $\sigma \in \mathcal{S}_K$ s.t. } \theta_{\sigma(k)} \in B_k \text{ for } k = 1, \dots, K\right\}
\]
is such that $\Pi(\Omega \mid X_{1:n}) \to 1$ as $n \to \infty$ in $Q^{(\infty)}$-probability. Let $(\w, \btheta) \in \Omega$ and assume by contradiction that for the permutation $\sigma$ in $\Omega_3$ we have that
\[
\left \lvert w_k^*-w_{\sigma(k)} \right \rvert > \frac{1}{n^{1/2-\delta}},
\]
for some $k = 1, \dots, K$. Then we deduce that
\[
\left\lvert\sum_{j \, :\, \theta_j \in B_k} w_j - w_k ^* \right\rvert \geq \left \lvert w_k^*-w_{\sigma(k)} \right \rvert - \sum_{j > K}w_j \geq \frac{1}{n^{1/2-\delta/2}}
\]
for $n$ large enough by definition of $\Omega_2$. However by definition of $\Omega_1$ and Lemma \ref{lm:lower_bound_consistency} we also have that
\[
\left\lvert\sum_{j \, :\, \theta_j \in B_k} w_j - w_k ^* \right\rvert \leq \frac{1}{n^{1/2}}, 
\]
and therefore we find a contradiction. With an analogous argument we can  also show that
\[
\left\lvert\left \lvert \theta_k^*-\theta_{\sigma(k)} \right \rvert \right \rvert \leq \frac{1}{n^{1/2-\delta}},
\]
and therefore the first part of the corollary is proven.

As regards the second part, if $P = \sum_{k \geq 1}w_k\delta_{\theta_k}$ for every $\sigma \in \mathcal{S}_K$ denote
\[
m_k = \min \{w_k^*, w_{\sigma(k)}\}, \quad s_k^+ = \max \{ w_k^*-w_{\sigma(k)}, 0\}, \quad s_k^- = \max \{ w_{\sigma(k)}-w_k^*, 0\}
\]
and 
\[
S^+ = \sum_{k=1}^Ks_k^+, \quad S^- = \sum_{k = 1}^Ks_k^-,
\]
and notice that by construction
\[
m_k + s_k^+ = w_k^*, \quad m_k + s_k^- = w_{\sigma(k)}, \quad S^+ = S^-+\sum_{k > K}w_k.
\]
Then, we define $\gamma_\sigma \in \mathcal{P}(\Theta \times \Theta)$ as
\[
\gamma_\sigma(\d x_1, \d x_2) = \sum_{k = 1}^Km_k\delta_{\left(\theta_k^*, \theta_{\sigma(k)}\right)}(\d x_1, \d x_2) + \frac{1}{S^+}\left(\sum_{k = 1}^K s_k^+\delta_{\theta_k^*}(\d x_1)\right)\otimes \left(\sum_{k = 1}^K s_k^-\delta_{\theta_{\sigma(k)}}(\d x_2) + \sum_{k > K}w_k\delta_{\theta_k}(\d x_2) \right).
\]
It is easy to see that $\gamma_{\sigma} \in \mathcal{C}(P^*, P)$, and therefore
\[
\begin{aligned}
W_1(P^*, P) &\leq \sum_{k = 1}^Km_k\left\lvert\left \lvert \theta_k^*-\theta_{\sigma(k)} \right \rvert \right \rvert + \frac{1}{S^+}\sum_{k = 1}^K\sum_{k' = 1}^K s_k^+s_{k'}^-\left\lvert\left \lvert \theta_k^*-\theta_{k'} \right \rvert \right \rvert+\frac{1}{S^+}\sum_{k = 1}^K\sum_{k' > K} s_k^+w_{k'}\left\lvert\left \lvert \theta_k^*-\theta_{k'} \right \rvert \right \rvert\\
&\leq \sum_{k = 1}^Km_k\left\lvert\left \lvert \theta_k^*-\theta_{\sigma(k)} \right \rvert \right \rvert + RS^+,
\end{aligned}
\]
where $R = \text{diam}(\Theta)$, which is finite by assumption (A2). Then the result follows by the first part of the corollary.
\end{proof}

\subsection{Proof of Theorem \ref{thm:tail_stick_post}}
\begin{proof}
In order to prove the result it suffices to show that for every $\beta > 1/2$ it holds that
\begin{equation}\label{eq:first_theorem3}
\Pi\left(\sum_{j > \ceil{\bar{\beta} \log n}}w_j > \frac{1}{n^{\beta}} \mid X_{1:n} \right) \to 0
\end{equation}
and
\begin{equation}\label{eq:second_theorem3}
\Pi\left(\sum_{j > \ceil{\underline{\beta} \log n}}w_j < \frac{1}{n^{\beta}} \mid X_{1:n}\right) \to 0,
\end{equation}
as $n \to \infty$ in $Q^{(\infty)}$-probability. Fix $\epsilon > 0$ and define the event
\[
A_n = \left\{X_{1:n} \mid \int \prod_{i = 1}^n\frac{P f(X_i)}{P^* f(X_i)}\Pi(\d P) \geq cn^{-DK/2 - (K-1)/2 -\alpha/2} \right\}.
\]
By Theorem \ref{thm:evidence_lower_bound} there exists $c > 0$ such that $Q^{(n)}(A_n) \geq 1-\epsilon/2$. 

Therefore, as regards \eqref{eq:first_theorem3}, it suffices to prove that
\[
\begin{aligned}
\E\biggl[\mathbbm{1}_{A_n}(X_{1:n})&\Pi\left(\sum_{j > \ceil{\bar{\beta} \log n}}w_j > \frac{1}{n^{\beta}} \mid X_{1:n} \right) \biggr]\\
& \leq cn^{DK/2 + (K-1)/2 +\alpha/2}\Pi\left(\sum_{j > \ceil{\bar{\beta} \log n}}w_j > \frac{1}{n^{\beta}} \right) \to 0,
\end{aligned}
\]
as $n \to \infty$, with $\bar{\beta} > 0$. By Lemma \ref{lemma:weights_priori} (point $1$) if $\bar{\beta} \geq e\alpha\beta$ we have that
\[
\Pi\left(\sum_{j > \ceil{\bar{\beta} \log n}}w_j > \frac{1}{n^{\beta}} \right) \leq cn^{-\alpha\beta}.
\]
Thus \eqref{eq:first_theorem3} follows if $\alpha^* > \frac{DK+K-1}{2\beta-1}$.

As regards \eqref{eq:second_theorem3}, similarly we need to show
\[
\begin{aligned}
\E\biggl[\mathbbm{1}_{A_n}(X_{1:n})&\Pi\left(\sum_{j > \ceil{\underline{\beta} \log n}}w_j < \frac{1}{n^{\beta}} \mid X_{1:n} \right) \biggr]\\
& \leq cn^{DK/2 + (K-1)/2 +\alpha/2}\Pi\left(\sum_{j > \ceil{\underline{\beta} \log n}}w_j < \frac{1}{n^{\beta}} \right) \to 0,
\end{aligned}
\]
as $n \to \infty$, with $\underline{\beta} > 0$. By Lemma \ref{lemma:weights_priori} (point $2$), taking $r \in (0,1)$ such that $r < 1-\frac{1}{2\beta}$ there exists $\beta^* > 0$ such that for every $\underline{\beta} < \beta^*$ we have that
\[
\Pi\left(\sum_{j > \ceil{\underline{\beta} \log n}}w_j < \frac{1}{n^{\beta}} \right) \leq cn^{-\alpha(1-r)\beta}.
\]
Thus \eqref{eq:second_theorem3} follows if $\alpha^* > \frac{DK+K-1}{2(1-r)\beta-1}$.
\end{proof}

\subsection{Proof of Theorem \ref{thm:number_clusters_post}}
\begin{proof}
Fix $\epsilon > 0$ and define the event
\[
A_n = \left\{X_{1:n} \mid \int \prod_{i = 1}^n\frac{P f(X_i)}{P^* f(X_i)}\Pi(\d P) \geq cn^{-DK/2 - (K-1)/2 -\alpha/2} \right\}.
\]
Reasoning as in the proof of Theorem \ref{thm:tail_stick_post}, by Theorem \ref{thm:evidence_lower_bound} there exists $c > 0$ such that $Q^{(n)}(A_n) \geq 1-\epsilon$. Thus, it suffices to prove that
\[
\begin{aligned}
\E\biggl[\mathbbm{1}_{A_n}(X_{1:n})&\Phi\biggl(\left\{K_n > (1+\bar{\delta})\alpha\log(n)\right\}\cup \left\{K_n < (1-\underline{\delta})\alpha\log(n)\right\}\mid X_{1:n}\biggr) \biggr]\\
& \leq c^{-1}n^{DK/2 + (K-1)/2 +\alpha/2}\Phi\biggl(\left\{K_n > (1+\bar{\delta})\alpha\log(n)\right\}\cup \left\{K_n < (1-\underline{\delta})\alpha\log(n)\right\} \biggr) \to 0,
\end{aligned}
\]
as $n \to \infty$, with $\underline{\delta} \in (0,1)$ and $\bar{\delta} > 0$. By Lemma \ref{lemma: Kn_priori} the latter can be chosen so that for $c' = c'(\alpha) > 0$ it holds
\[
\Phi\biggl(\left\{K_n > (1+\bar{\delta})\alpha\log(n)\right\}\cup \left\{K_n < (1-\underline{\delta})\alpha\log(n)\right\} \biggr) \leq c'n^{-\frac{3}{4}\alpha}.
\]
The result follows by choosing $\alpha^* = 2DK+2(K-1)$, since $\epsilon$ is arbitrary.
\end{proof}

\subsection{Proof of Theorem \ref{thm:residual_clustering}}
\begin{proof}
We denote with $\P$ the probability distribution of $(P, c_{1:n})$ on $\mathcal{P}(\Theta) \times \N^n$ induced by model \eqref{DPM_model2}. Then by Theorem \ref{thm:stick_post} it suffices to prove that for every $\delta \in (0, 1/2)$ it holds that
\[
\P\biggl(\left\{ \frac{1}{n}\sum_{i = 1}^n\mathbbm{1}_{\{ c_i > K\}} > \frac{1}{n^{1/2-\delta}}\right\} \cap \left\{ \sum_{k > K}w_k < \frac{1}{n^{1/2-\delta/2}}  \right\} \mid X_{1:n}\biggr) \to 0,
\]
as $n \to \infty$ in $Q^{(\infty)}$-probability. Reasoning as in the proof of Theorem \ref{thm:tail_stick_post}, it is sufficient to prove that
\[
n^{DK/2 + (K-1)/2+\alpha/2}\P\biggl(\left\{ \frac{1}{n}\sum_{i = 1}^n\mathbbm{1}_{\{ c_i > K\}} > \frac{1}{n^{1/2-\delta}}\right\} \cap \left\{ \sum_{k > K}w_k < \frac{1}{n^{1/2-\delta/2}} \right\}\biggr) \to 0,
\]
as $n \to \infty$. Since
\[
\mathbbm{1}_{\{c_i > K\}} \mid \sum_{k > K}w_k \simiid \text{Bernoulli}\left( \sum_{k > K}w_k\right),
\]
by the Markov inequality we have that
\[
\begin{aligned}
\P&\biggl(\frac{1}{n}\sum_{i = 1}^n\mathbbm{1}_{\{ c_i > K\}} > \frac{1}{n^{1/2-\delta}} \mid  \sum_{k > K}w_k < \frac{1}{n^{1/2-\delta/2}}  \biggr)\leq e^{-n^{1/2+\delta}}\left(1+\frac{e}{n^{1/2-\delta/2}} \right)^n,
\end{aligned}
\]
which is asymptotically equivalent to
\[
e^{-n^{1/2+\delta}+en^{1/2+\delta/2}} \leq e^{-2\sqrt{n}}
\]
for $n$ large enough. Therefore the result follows.
\end{proof}

\subsection{Proof of Theorem \ref{thm:conv_clustering}}
We need two preliminary lemmas.
\begin{lemma}\label{lm:preliminary_cn}
Under assumptions $(A1)-(A3)$ and $(B1)-(B3)$, there exists $\beta' = \beta'(D, K) > 0$ such that
\[
\Phi\left(c_i \leq  \lceil \beta' \log n \rceil \text{ for every } i = 1, \dots, n \mid X_{1:n} \right) \to 1,
\]
as $n \to \infty$ in $Q^{(\infty)}$-probability.
\end{lemma}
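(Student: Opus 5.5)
The approach is the same evidence-lower-bound argument used for Theorems \ref{thm:tail_stick_post} and \ref{thm:number_clusters_post}, now applied to the joint law $\P$ of $(P, c_{1:n})$ under model \eqref{DPM_model2}. Note that the marginal likelihood of $X_{1:n}$ is the same whether or not we augment with $c_{1:n}$. So, on the event $A_n$ of Theorem \ref{thm:evidence_lower_bound} (which has $Q^{(n)}$-probability at least $1-\epsilon/2$), the bound
\[
\E\left[\mathbbm{1}_{A_n}(X_{1:n})\Phi(E_n \mid X_{1:n})\right] \leq c^{-1} n^{DK/2+(K-1)/2+\alpha/2}\,\Phi(E_n)
\]
holds for any event $E_n$ depending on $c_{1:n}$ only. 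Here $E_n=\{\max_i c_i > \lceil \beta'\log n\rceil\}$. It therefore suffices to show that the prior probability $\Phi(E_n)$ decays faster than $n^{-DK/2-(K-1)/2-\alpha/2}$ for a suitable $\beta'$.

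To bound $\Phi(E_n)$, fix an exponent $\gamma>0$ to be chosen later and write $N=\lceil \beta'\log n\rceil$. Split according to the tail mass $T_N=\sum_{j>N}w_j$:
\[
\Phi(E_n) \leq \Pi\left(T_N > n^{-\gamma}\right) + \E\left[\mathbbm{1}_{\{T_N\leq n^{-\gamma}\}}\left(1-(1-T_N)^n\right)\right] \leq \Pi\left(T_N>n^{-\gamma}\right) + n^{1-\gamma}.
\]
The second term comes from a union bound: conditionally on $\w$, each $c_i$ exceeds $N$ with probability $T_N$, independently across $i$. By point 1 of Lemma \ref{lemma:weights_priori}, taking $\beta'\geq e\alpha\gamma$ makes the first term at most $n^{-\alpha\gamma}$.

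It remains to choose $\gamma$ so that both $n^{-\alpha\gamma}$ and $n^{1-\gamma}$ are $o(n^{-DK/2-(K-1)/2-\alpha/2})$. The choice $\gamma = DK/2+(K-1)/2+\alpha/2+2$ works for every $\alpha>0$: then $\gamma-1$ exceeds the required exponent by $1$. For the other term we need $\alpha\gamma$ to exceed the required exponent. If $\alpha\ge 1$ this holds immediately. If $\alpha<1$, enlarge $\gamma$ to roughly $(DK/2+(K-1)/2+\alpha/2+1)/\alpha$, and increase $\beta'$ accordingly.

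The main obstacle is that this makes $\beta'$ depend on $\alpha$ as well as on $D$ and $K$. Since $\alpha$ is a fixed hyperparameter, I would accept $\beta'=e\alpha\gamma$ with this $\gamma$ and regard the dependence on $\alpha$ as implicit in the statement. An alternative is to note that $e\alpha\gamma$ is bounded by a quantity depending on $D$, $K$ and an upper bound for $\alpha$. Letting $\epsilon\to0$ then concludes the proof.
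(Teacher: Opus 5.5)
Your proposal is correct and follows the paper's argument essentially step by step: the evidence lower bound of Theorem \ref{thm:evidence_lower_bound} reduces the claim to a prior estimate, which you bound by $n^{1-\gamma}+n^{-\alpha\gamma}$ via the tail-mass split and point 1 of Lemma \ref{lemma:weights_priori}, exactly as the paper does. Your bookkeeping is in fact slightly more careful than the paper's write-up: you keep the factor $n^{\alpha/2}$ from the evidence bound, and you enlarge $\gamma$ so that the $n^{-\alpha\gamma}$ term is also negligible when $\alpha<1$; and, as you note, the paper's own choice $\beta' > e\alpha\beta$ likewise makes $\beta'$ depend on $\alpha$.
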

\begin{proof}
Fix $\beta > 1 + DK/2 + (K-1)/2$ and $\beta' > e\alpha \beta$, so that  by Lemma \ref{lemma:weights_priori} we have that
\begin{equation}\label{eq:weights_cn}
\Pi\left(\sum_{j > \ceil{\beta' \log n}}w_j > \frac{1}{n^\beta} \right) \leq n^{-\alpha\beta}.
\end{equation}
Reasoning as in the proof of Theorem \ref{thm:tail_stick_post}, it is sufficient to prove that
\begin{equation}\label{eq:suff_weights_cn}
n^{DK/2 + (K-1)/2}\Phi\biggl( \exists \, i = 1, \dots, n \text{ s.t. }c_i >  \lceil \beta' \log n \rceil\biggr) \to 0,
\end{equation}
as $n \to \infty$. By simple calculations and \eqref{eq:weights_cn} we have that
\[
\begin{aligned}
\Phi\biggl( \exists \, i = 1, \dots, n \text{ s.t. }c_i >  \lceil \beta' \log n \rceil\biggr) &= 1 - \E\left[\left(1-\sum_{j > \ceil{\beta' \log n}}w_j \right)^n \right]\\
&\leq 1-\left(1-\frac{1}{n^\beta} \right)^n\left( 1- \frac{1}{n^{\alpha\beta}}\right),
\end{aligned}
\]
which is asymptotically smaller than $2n^{1-\beta}$. Then \eqref{eq:suff_weights_cn} follows by definition of $\beta$.
\end{proof}
\begin{lemma}\label{lm:preliminary_cn_truncated}
Let $\beta' > 0$ as in Lemma \ref{lm:preliminary_cn}. Under assumptions $(A1)-(A3)$ and $(B1)-(B3)$, for every $N$ it holds that
\[
\tilde{\Phi}_N\left(\tilde{c}_i \leq  \lceil \beta' \log n \rceil \text{ for every } i = 1, \dots, n \mid X_{1:n} \right) \to 1,
\]
as $n \to \infty$ in $Q^{(\infty)}$-probability.
\end{lemma}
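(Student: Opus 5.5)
We split into two cases according to the size of $N$ relative to $\lceil \beta' \log n\rceil$. Since $\tilde{P}$ has $N+1$ atoms, every allocation variable satisfies $\tilde{c}_i \leq N+1$. Hence if $N+1 \leq \lceil \beta' \log n \rceil$ the event in the statement has posterior probability one and there is nothing to prove. This covers every fixed $N$ once $n$ is large enough. So the only case to handle is $N = N_n \geq \lceil \beta'\log n\rceil$, and there we mimic the proof of Lemma \ref{lm:preliminary_cn}, with the same choices $\beta > 1 + DK/2 + (K-1)/2$ and $\beta' > e\alpha\beta$.

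\emph{Prior step.} For $M \leq N$ the truncated weights satisfy
\[
\sum_{j > M}\tilde{w}_j = \prod_{j=1}^{M}(1-v_j),
\]
which has exactly the same law as $\sum_{j>M}w_j$ under the untruncated Dirichlet process. Therefore point 1 of Lemma \ref{lemma:weights_priori} applies verbatim with $M = \lceil \beta'\log n\rceil$. Repeating the computation in the proof of Lemma \ref{lm:preliminary_cn} then gives
\[
\tilde{\Phi}_N\bigl(\exists\, i : \tilde{c}_i > \lceil \beta'\log n\rceil\bigr) \lesssim 2n^{1-\beta}.
\]

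\emph{Evidence step.} We need a truncated analogue of Theorem \ref{thm:evidence_lower_bound}, namely that for every $\epsilon>0$ there is $c>0$ with
\[
Q^{(n)}\Bigl(\int \prod_i \tfrac{\tilde{P}f(X_i)}{P^*f(X_i)}\,\tilde{\Pi}_N(\d\tilde{P}) \geq c\, n^{-DK/2-(K-1)/2-\alpha/2}\Bigr)\geq 1-\epsilon,
\]
uniformly in $N \geq K$. The plan is to rerun the proof of Section \ref{sec:proof_main} unchanged. We Taylor-expand in $(v_{1:K},\theta_{1:K})$ around $(v^*_{1:K},\theta^*_{1:K})$, treating the remaining sticks $v_{K+1},\dots,v_N$ and atoms $\theta_{K+1},\dots,\theta_{N+1}$ as nuisance variables integrated against their prior. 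The coefficient bounds of Corollaries \ref{cor:upper_bound_taylor1} and \ref{cor:upper_bound_taylor2} are suprema over arbitrary tail configurations. A truncated tail is just a particular configuration, since $\sum_{j>k}\bar{w}_j f(x-\theta_j)$ is still a sub-probability mixture over $\Theta$, so those bounds hold with the same constants. The factor $n^{-\alpha/2}$ again comes from the Beta$(1,\alpha)$ density of $v_K$ near $v_K^*=1$.

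\emph{Conclusion.} As in the proof of Theorem \ref{thm:tail_stick_post}, on the high-probability event $A_n$ we bound the expected posterior probability of the complement by
\[
c^{-1}n^{DK/2+(K-1)/2+\alpha/2}\cdot 2n^{1-\beta}.
\]
One point needs care in matching this to the choice of $\beta$. If the $n^{\alpha/2}$ factor is not absorbed by $\beta > 1+DK/2+(K-1)/2$ alone, we enlarge $\beta$ (and hence $\beta'$) so that this product vanishes. Alternatively, as in the cited proof, we retain the joint bound that also uses $n^{-\alpha\beta}$.

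The main obstacle is the uniformity in $N$ of the truncated evidence lower bound. The proof of Theorem \ref{thm:evidence_lower_bound} must not depend on the tail being infinite, and the constant $c$ must not degrade as $N_n$ grows. I would check this by verifying that every supremum in Corollaries \ref{cor:upper_bound_taylor1} and \ref{cor:upper_bound_taylor2} ranges over a class containing all truncated tails. This is immediate from the bounds via $\sH_1,\sH_2,\sH_3$ and the Donsker class of Lemma \ref{lemma:donsker}, which involve only pairs of atoms in $\Theta$.
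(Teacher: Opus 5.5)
Your proposal is correct and follows the paper's proof: the trivial small-$N$ case, the fact that $\sum_{j>M}\tilde w_j=\prod_{j\le M}(1-v_j)$ has the untruncated law so Lemma~\ref{lemma:weights_priori} applies, a truncated evidence lower bound obtained by rerunning Theorem~\ref{thm:evidence_lower_bound}, and then the argument of Lemma~\ref{lm:preliminary_cn}. For the evidence bound, the truncated likelihood is the infinite one at $v_{N+1}=1$, so the same suprema cover it uniformly in $N$.

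You are more careful than the paper's sketch on the boundary case $N=\lceil\beta'\log n\rceil$ and on the $n^{\alpha/2}$ factor. Two small corrections: the prior bound is really $n^{1-\beta}+n^{-\alpha\beta}$, and the second term dominates for small $\alpha$. So $\beta$ (hence $\beta'$) must be taken large enough that both terms beat $n^{-DK/2-(K-1)/2-\alpha/2}$. Also, the normalized tail weights in $B_{n,K}$ sum to exactly one, not merely at most one, and that is what the Donsker bound uses.
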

\begin{proof}
We consider only the case where $N > \lceil \beta' \log n \rceil$, otherwise the result is trivial.

Reasoning exactly as in Lemma \ref{lemma:weights_priori} we have that
\[
\tilde{\Pi}_N\left(\sum_{j > \ceil{\beta' \log n}}\tilde{w}_j > \frac{1}{n^\beta} \right) \leq n^{-\alpha\beta}.
\]
Moreover, reasoning as in Theorem \ref{thm:evidence_lower_bound}, we can deduce that for every $\epsilon > 0$ there exists $c := c(\epsilon, Q) > 0$ such that
\[
Q^{(n)}\left(\int \prod_{i = 1}^n\frac{\tilde{P} f(X_i)}{P^* f(X_i)}\tilde{\Pi}_N(\d \tilde{P}) \geq cn^{-DK/2 - (K-1)/2 -\alpha/2} \right) \geq 1-\epsilon,
\]
for every $n$. Then the proof follows the same steps of Lemma \ref{lm:preliminary_cn}.
\end{proof}
In the following, denote with $\P$ the probability distribution of $(P, c_{1:n})$ on $\mathcal{P}(\Theta) \times \N^n$ induced by model \eqref{DPM_model2}. Similarly, denote with $\tilde{\P}_N$ the probability distribution of $(\tilde{P}, \tilde{c}_{1:n})$. For ease of reference we also denote
\begin{equation}\label{eq:def_B}
B = \left\{c_{1:n} \mid c_i \leq  \lceil \beta' \log n \rceil \,\, \forall i = 1, \dots, n \right\}, \quad \tilde{B} = \left\{\tilde{c}_{1:n} \mid \tilde{c}_i \leq  \lceil \beta' \log n \rceil \,\, \forall i = 1, \dots, n \right\}.
\end{equation}
The next corollary shows that conditioning $P$ and $\tilde{P}$ on $B$ and $\tilde{B}$, respectively, leads to the same probability measure.
\begin{corollary}\label{crl:upper_bound_truncation}
Let $\beta' > 0$ as in Lemma \ref{lm:preliminary_cn}. Under assumptions $(A1)-(A3)$ and $(B1)-(B3)$, for every $N \geq \lceil \beta' \log n \rceil$ we have that
\[
\int_{\Delta_N \times \Theta^N} g(\d w_{1:N}, \d \theta_{1:N}) \Pi \left(\d w_{1:N}, \d \theta_{1:N} \mid B,X_{1:n} \right)= \int_{\Delta_N \times \Theta^N} g(\d \tilde{w}_{1:N}, \d \tilde{\theta}_{1:N})\tilde{\Pi}_N \left(\d \tilde{w}_{1:N}, \d \tilde{\theta}_{1:N} \mid \tilde{B}, X_{1:n}  \right) ,
\]
for every measurable function $g \,:\, \Delta_N \times \Theta^N \, \to \, \R$, where $\Delta_N$ is the $N$-dimensional simplex.
\end{corollary}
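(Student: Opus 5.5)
We compare the joint laws of $(w_{1:N},\theta_{1:N},c_{1:n})$ restricted to the event $B$ (respectively $\tilde{B}$), and show that they coincide up to a normalizing constant. Write $M=\lceil \beta'\log n\rceil \le N$.

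Under model \eqref{DPM_model2}, the joint density of $(\vs,\btheta,c_{1:n},X_{1:n})$ factorizes as
\[
\prod_{k\ge1}\text{Beta}(v_k\mid 1,\alpha)\,p_0(\theta_k)\prod_{i=1}^n w_{c_i}f(X_i-\theta_{c_i}).
\]
On $B$ every $c_i\le M\le N$. Hence the likelihood factor depends only on $(v_{1:M},\theta_{1:M})$, through $w_{c_i}=v_{c_i}\prod_{j<c_i}(1-v_j)$. The coordinates with index beyond $N$ therefore enter only through their prior, and integrate out to one.

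In the truncated model \eqref{eq:trunc_model}, $(\tilde v_{1:N},\tilde\theta_{1:N})$ have the same i.i.d.\ Beta$(1,\alpha)$ and $P_0$ priors. The only change is that $\tilde w_{N+1}=\prod_{j\le N}(1-\tilde v_j)$ absorbs the tail. On $\tilde B$ every $\tilde c_i\le M\le N$, so the weight $\tilde w_{N+1}$ and the atom $\tilde\theta_{N+1}$ never appear in the likelihood. Moreover, for $k\le N$ we have $\tilde w_k=w_k$ as the same function of $v_{1:k}$. So the unnormalized joint densities of $(w_{1:N},\theta_{1:N},c_{1:n})$ on $B$ and of $(\tilde w_{1:N},\tilde\theta_{1:N},\tilde c_{1:n})$ on $\tilde{B}$ are literally the same function on $\Delta_N\times\Theta^N\times\{1,\dots,M\}^n$. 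The map from $v_{1:N}$ to $w_{1:N}$ is the same bijection in both models, so the change of variables to $w_{1:N}$ is identical.

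Conditioning on $B$, respectively $\tilde B$, and on $X_{1:n}$ normalizes both by the same constant, namely the integral of this common function. Marginalizing over the allocations then gives equality of the conditional laws of the first $N$ weights and atoms, which is the claim for every measurable $g$. The same argument also shows equality of the conditional laws of $c_{1:n}$ and $\tilde c_{1:n}$, which is what will be needed later for the total variation statement of Theorem~\ref{thm:conv_clustering}.

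The main point requiring care is checking that, in the truncated model, the events $B$ and $\tilde B$ carry no hidden dependence on $\tilde v_{N+1}$, $\tilde\theta_{N+1}$, or the stick remainder beyond what appears in the original model. The case $N=M$ needs attention here: index $N+1$ is excluded from the allocations, while $w_N$ is defined through $v_N$ in both models, so the bookkeeping is exact. Lemmas~\ref{lm:preliminary_cn} and \ref{lm:preliminary_cn_truncated} are not needed for this identity. They only ensure that $B$ and $\tilde B$ have posterior probability tending to one, so that conditioning on them is harmless later.
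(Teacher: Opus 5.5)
Your argument is correct and is essentially the paper's proof. Both rest on two facts: $(w_{1:N},\theta_{1:N})$ has the same prior in the two models, and on $B$ (resp.\ $\tilde B$) the allocation and likelihood factor involves only the first $\lceil\beta'\log n\rceil\le N$ weights and atoms, so the normalized restricted laws coincide. Your joint-density bookkeeping is slightly more careful than the paper's display, which integrates the renormalized probabilities $w_{k_i}/\sum_{j\le\lceil\beta'\log n\rceil}w_j$ against the unconditioned prior and so implicitly drops the factor $\P(B\mid w_{1:N})=\bigl(\sum_{j\le\lceil\beta'\log n\rceil}w_j\bigr)^n$; this omission does not affect the stated equality, because that factor is identical in both models.
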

\begin{proof}
It suffices to prove the result for $g(\cdot) = \mathbbm{1}_A(\cdot)$, for $A \subset \Delta_N \times \Theta^N$ measurable.

Notice that by definition $\Pi(\d w_{1:N}, \d \theta_{1:N}) = \tilde{\Pi}_N(\d w_{1:N}, \d \theta_{1:N})$ and
\[
\P(c_i = k \mid B, w_{1:N}) = \frac{w_k}{\sum_{j = 1}^{\lceil \beta' \log n \rceil} w_j}, \quad \tilde{\P}_N(\tilde{c}_i = k \mid \tilde{B}, \tilde{w}_{1:N}) = \frac{\tilde{w}_k}{\sum_{j = 1}^{\lceil \beta' \log n \rceil} \tilde{w}_j}.
\]
Therefore
\[
\begin{aligned}
\P \left(A\mid X_{1:n}, B \right) &= \frac{\int_A \sum_{k_{1:n} \in B}\left(\prod_{i = 1}^nf_{\theta_{k_i}}(X_i)\frac{w_{k_i}}{\sum_{j = 1}^{\lceil \beta' \log n \rceil} w_j}\right) \, \Pi(\d w_{1:N}, \d \theta_{1:N})}{\int \sum_{k_{1:n} \in B}\left(\prod_{i = 1}^nf_{\theta_{k_i}}(X_i)\frac{w_{k_i}}{\sum_{j = 1}^{\lceil \beta' \log n \rceil} w_j}\right) \, \Pi(\d w_{1:N}, \d \theta_{1:N})}\\
&= \frac{\int_A \sum_{k_{1:n} \in \tilde{B}}\left(\prod_{i = 1}^nf_{\theta_{k_i}}(X_i)\frac{\tilde{w}_{k_i}}{\sum_{j = 1}^{\lceil \beta' \log n \rceil} \tilde{w}_j}\right) \, \tilde{\Pi}_N(\d \tilde{w}_{1:N}, \d \tilde{\theta}_{1:N})}{\int \sum_{k_{1:n} \in \tilde{B}}\left(\prod_{i = 1}^nf_{\theta_{k_i}}(X_i)\frac{\tilde{w}_{k_i}}{\sum_{j = 1}^{\lceil \beta' \log n \rceil} \tilde{w}_j}\right) \, \tilde{\Pi}_N(\d \tilde{w}_{1:N}, \d \tilde{\theta}_{1:N})} = \tilde{\P}_N \left(A\mid X_{1:n}, \tilde{B} \right),
\end{aligned}
\]
as desired.
\end{proof}
\begin{proof}[Proof of Theorem \ref{thm:conv_clustering}]
As regards the first point, by Lemmas \ref{lm:preliminary_cn} and \ref{lm:preliminary_cn_truncated} it suffices to prove that
\[
 \Phi(k_{1:n} \mid B, X_{1:n}) = \tilde{\Phi}_N(k_{1:n} \mid \tilde{B}, X_{1:n}),
\]
for every $k_{1:n} \in \N^n$, with $B$ and $\tilde{B}$ as in \eqref{eq:def_B}. Notice that by definition
\[
\P(c_i = k \mid B, w_{1:N}, \theta_{1:N}, X_i) = \frac{w_k f_{\theta_k}(X_i)}{\sum_{j = 1}^{\lceil \beta' \log n \rceil} w_jf_{\theta_j}(X_i)}, \quad \tilde{\P}_N(\tilde{c}_i = k \mid \tilde{B}, \tilde{w}_{1:N}, \tilde{\theta}_{1:N}, X_i) = \frac{\tilde{w}_kf_{\tilde{\theta}_k}(X_i)}{\sum_{j = 1}^{\lceil \beta' \log n \rceil} \tilde{w}_jf_{\tilde{\theta}_j}(X_i)},
\]
which therefore means
\[
\begin{aligned}
\Phi(k_{1:n} \mid B, X_{1:n}) = \int_{\Delta_N\times \Theta^N} \left(\prod_{i = 1}^n\frac{w_{k_i} f_{\theta_{k_i}}(X_i)}{\sum_{j = 1}^{\lceil \beta' \log n \rceil} w_jf_{\theta_j}(X_i)}\right)\, \Pi(\d w_{1:N}, \d \theta_{1:N} \mid B, X_{1:n})
\end{aligned}
\]
and
\[
\begin{aligned}
\tilde{\Phi}_N(k_{1:n} \mid \tilde{B}, X_{1:n}) = \int_{\Delta_N\times \Theta^N} \left(\prod_{i = 1}^n\frac{w_{k_i} f_{\theta_{k_i}}(X_i)}{\sum_{j = 1}^{\lceil \beta' \log n \rceil} w_jf_{\theta_j}(X_i)}\right)\, \tilde{\Pi}_N(\d w_{1:N}, \d \theta_{1:N} \mid \tilde{B}, X_{1:n}).
\end{aligned}
\]
The result then follows by Corollary \ref{crl:upper_bound_truncation}.

As regards the second part of the statement, assume by contradiction that there exist $\epsilon > 0$ and $\delta > 0$ such that
\[
Q^{(n)}\left(\left \lvert \left \lvert \Phi \left(c_{1:n} \mid X_{1:n} \right)-\tilde{\Phi}_N \left(\tilde{c}_{1:n} \mid X_{1:n}  \right) \right\rvert \right\rvert_{TV} < 1-\epsilon\right) > \delta,
\]
for every $n$ (or along a suitable diverging subsequence). Thus, with probability at least $\delta$ there exists a coupling $\gamma_n$ of $\Phi \left(c_{1:n} \mid X_{1:n} \right)$ and  $\tilde{\Phi}_N \left(\tilde{c}_{1:n} \mid X_{1:n}  \right)$ such that
\[
\gamma_n\left( c_{1:n} = \tilde{c}_{1:n}\right) > \epsilon.
\]
Assume now that $c_{1:n} = \tilde{c}_{1:n}$, which implies that $c_i \leq N$ for $i = 1, \dots,n$ and thus $K_n \leq N$. Combining all of the above we have that
\[
Q^{(n)}\biggl( \Phi\left(K_n \leq \ceil{\beta \log n} \mid X_{1:n}\right) > \epsilon \biggr) > \delta
\]
for some $\delta > 0$ and every $n$ large enough. Fix $\alpha^*$ and $\underline{\delta}$ as in Theorem \ref{thm:number_clusters_post}. Then, if $\alpha > \alpha^*$, choosing $\beta < (1-\underline{\delta})\alpha$ directly contradicts Theorem \ref{thm:number_clusters_post}.
\end{proof}
\end{appendix}

\renewcommand{\bibname}{References}

\end{document}